\tikzstyle arrowstyle=[scale=2]
\tikzstyle directed=[postaction={decorate,decoration={markings,
    mark=at position .65 with {\arrow[arrowstyle]{stealth}}}}]
\tikzstyle{vertex}=[circle, draw, fill=black, inner sep=0pt, minimum size=4pt]
\tikzstyle{dot}=[circle, draw, fill=black, inner sep=0pt, minimum size=2pt] 
\newtheorem{theorem}{Theorem}[section]
\newtheorem{lemma}[theorem]{Lemma}
\newtheorem{proposition}[theorem]{Proposition}
\newtheorem{prop}[theorem]{Proposition}
\newtheorem{definition}[theorem]{Definition}
\newtheorem{corollary}[theorem]{Corollary}
\newcommand{\qed}{\ \hfill\mbox{$\Box$}\vspace{\baselineskip}}
\newenvironment{proof}{\noindent {\bf Proof:}}{{\qed}}
\DeclareMathOperator\link{link}
\definecolor{purple}{rgb}{.5, 0, .635} 
\newcommand\abs[1]{\left| #1 \right|}
\newif\ifxetexorluatex
\title{Manifold Matching Complexes}
\author{Margaret Bayer,
Bennet Goeckner and Marija Jeli\'{c} Milutinovi\'{c}}
\begin{document}

\maketitle

\begin{abstract}
The matching complex of a graph is the simplicial complex whose vertex set is
the set of edges of the graph with a face for each independent set of edges.
In this paper we completely characterize the pairs (graph, matching complex) for which
the matching complex is a homology manifold, with or without boundary. 
Except in dimension two, all of these manifolds are spheres or balls. 
\end{abstract}

\section{Introduction}
The matching complex $M(G)$ of a graph $G$ is a simplicial complex representing the matchings (sets of independent edges) of the graph.
There is an extensive literature describing the matching complexes of certain types of graphs.

There are many results on the topology of the matching complexes of
interesting classes of graphs.
For example, there has been much study of chessboard complexes,
$\Delta_{m,n}=M(K_{m,n})$.
Bj\"{o}rner, et al.,~\cite{blvz}  prove that $M(K_{m,n})$ is
$\nu$-connected,
where $\nu=\min\{m,n,\lfloor\frac{m+n+1}{3}\rfloor\}-2$. 
Ziegler~\cite{ziegler} shows that for $m\ge 2n-1$, $M(K_{m,n})$ is shellable,
and Joji\'{c}~\cite{jojic} uses this to give a recursion for the $h$-vectors
of these chessboard complexes.
Athanasiadis~\cite{Athan} studies vertex decomposability of skeleta of
hypergraph matching complexes and chessboard complexes.
Wachs~\cite{Michelle} surveys results on the homology of chessboard complexes
and matching complexes of the complete graph.
Jonsson's dissertation (published as \cite{Jakob}) studies various complexes
associated with graphs, including matching complexes.
Kozlov \cite{Kozlov} proves that, for
$\nu_n=\lfloor \frac{n-2}{3}\rfloor$,
 the matching complex $M(P_{n+1})$ of
the length $n$ path is homotopy equivalent to the sphere $S^{\nu_n}$ when
$n\bmod{3}\ne 1$, and
 the matching complex $M(C_n)$ of
the $n$-cycle is homotopy equivalent to the sphere $S^{\nu_n}$ when
$n\bmod{3}\ne 0$.
(As is standard in graph theory, the subscript on
a graph name indicates the number of vertices; for paths this is one
more than the length.)
Matching complexes of grid graphs have been studied by Braun and Hough 
\cite{Braun_Hough} and by Matsushita \cite{Matsushita}.
Marietti and Testa \cite{MT08} proved that matching complexes of forests
are contractible or homotopy equivalent to a wedge of spheres. For
caterpillar graphs, Jelić Milutinović, et al. \cite{JJMV} give explicit formulas
for the number of spheres in each dimension. They also study the
connectivity of matching complexes of honeycomb graphs.

We are interested in the reverse question: which simplicial complexes are matching complexes of graphs?
In this paper we will classify homology manifolds, with and without
boundary, that are matching complexes. 
In Section~\ref{Prelim Section}, we
review definitions and introduce several tools that we will rely on in later sections.
In Section~\ref{Sphere Section}, we describe all graphs whose matching complexes
are 1- and 2-dimensional spheres.
In Section~\ref{Manifolds_w/o_boundary Section},
we describe all homology manifolds
without boundary that arise as matching complexes.  All of these matching complexes are combinatorial spheres, except in dimension two, where the torus is also a matching complex.
In
Section~\ref{Manifolds_w/_Boundary Section}, we finish the story with
a complete
description of the matching complexes that are homology
manifolds with boundary.
In dimension two, a variety of manifolds with boundary
arise as matching complexes.
In dimensions three and higher, these matching complexes are all  combinatorial balls.
Moreover, the graphs that produce manifold
matching complexes are all constructed from the disjoint union of copies of
a finite set of graphs, which we explicitly specify.

\section{Preliminaries}\label{Prelim Section}
\subsection{General properties of matching complexes}

\begin{definition}
{\em
A {\em matching} of a graph $G$ is a set of edges of $G$, no two of which share a vertex.
}\end{definition}

\begin{definition}{\em
The {\em matching complex} of a graph $G$ is the simplicial complex $M(G)$ with vertex set the set $E$ of edges of $G$ and simplices every subset $\sigma\subseteq E$ that forms a matching of $G$.
}\end{definition}

In what follows we will use the notational convention: if $v$ is a vertex of $M(G)$, then the corresponding edge of $G$ is denoted $\overline{v}$.  We extend that to sets: if $\sigma$ is a face of $M(G)$, then $\overline{\sigma}$ is the corresponding matching of $G$.

Note that an isolated vertex of $G$ would contribute nothing to $M(G)$; we avoid them to simplify statements.  Allowing a multiple edge in a graph would duplicate a subcomplex in the matching complex.  A loop is not considered to be in any matching. 
So from now on we will assume the following:

\begin{quote}
All graphs are finite, with no isolated vertices, loops, or multiple edges.
\end{quote}

In addition, we only consider finite simplicial complexes.

Note that a matching complex $M(G)$ does not determine $G$ uniquely.
For example $G_1 = K_{1,3}$ and $G_2 = K_3$ have the same
matching complex, $M(G_i)= 3P_1$.

\begin{definition}{\em
A {\em missing face} $\sigma$  of a simplicial complex $\Delta$ is a subset
of vertices of $\Delta$ such that $\sigma$ is not a face of $\Delta$, but all
proper subsets of $\sigma$ are faces of $\Delta$.
A simplicial complex $\Delta$ is a {\em flag complex} if and only if every
missing face of $\Delta$ is of size 2.
}\end{definition}

The following proposition is proved easily from the definition.

\begin{proposition}\label{flag}
If $M(G)$ is the matching complex of a graph $G$, then $M(G)$ is a flag complex.
\end{proposition}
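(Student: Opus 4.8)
The plan is to show that every missing face of $M(G)$ has exactly two vertices, which is precisely the definition of a flag complex. A missing face $\sigma$ is a vertex set that is not a face, but all of whose proper subsets are faces. So I would take such a $\sigma$, assume for contradiction that $|\sigma| \ge 3$, and derive a contradiction by producing a proper subset of $\sigma$ that is already not a face, violating minimality.

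First I would unpack what it means for $\sigma = \{v_1, \dots, v_k\}$ to fail to be a face of $M(G)$. By definition, $\sigma$ is a face exactly when the corresponding edge set $\overline{\sigma} = \{\overline{v_1}, \dots, \overline{v_k}\}$ is a matching of $G$, i.e.\ the edges are pairwise disjoint. So $\sigma$ is \emph{not} a face precisely when $\overline{\sigma}$ is not a matching, which means some two of these edges share a vertex. That is, there exist indices $i \ne j$ with $\overline{v_i} \cap \overline{v_j} \ne \emptyset$. The key observation is that this obstruction is \emph{local}: it involves only two of the edges.

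The main step is then to conclude directly. Having found $i \ne j$ with $\overline{v_i}$ and $\overline{v_j}$ sharing a vertex, the two-element set $\tau = \{v_i, v_j\}$ has $\overline{\tau}$ failing to be a matching, so $\tau$ is not a face of $M(G)$. If $|\sigma| \ge 3$, then $\tau$ is a \emph{proper} subset of $\sigma$ that is not a face, contradicting the assumption that every proper subset of a missing face is a face. Hence $|\sigma| = 2$, and every missing face has size two, so $M(G)$ is a flag complex.

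I do not anticipate a serious obstacle here; the argument is essentially definition-chasing, and the content is entirely captured by the local nature of the matching condition (an edge set is a matching if and only if every pair of its edges is disjoint). The one point to state carefully is the degenerate check that $\sigma$ cannot be a missing face of size $0$ or $1$: the empty set is always a face, and any single vertex $\{v\}$ is a face since one edge alone is trivially a matching, so $M(G)$ contains all its vertices and every missing face has size at least two. Combined with the size-two bound above, this pins the size of every missing face to exactly two.
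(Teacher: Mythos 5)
Your proof is correct, and it is exactly the ``definition-chasing'' argument the paper has in mind: the paper states this proposition without proof, remarking only that it is proved easily from the definition, since failure to be a matching is witnessed by a single pair of incident edges. Your write-up, including the check that faces of size at most one are always present, fills in precisely that omitted routine argument.
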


\textbf{Observation}.  In analogy with graph terminology, we say that
a subcomplex $N$ is an {\em induced subcomplex} of a simplicial complex $M$, if
$N$ is the restriction of $M$ to some subset of the vertices of $M$. 
If $M$ is the matching complex of a graph $G$ and $N$ is an induced
subcomplex of $M$, then $N$ is the matching complex of a subgraph of $G$, namely the subgraph spanned by the edges corresponding to vertices of $N$.

\begin{definition}{\em For a face $\sigma$ of a simplicial complex $\Delta$, the \emph{link} of $\sigma$
in $\Delta$ is
$\link_\Delta \sigma = \{ \tau \in \Delta : \tau \cup \sigma \in \Delta ~\textrm{and}~ \tau \cap \sigma = \varnothing \}.$}
\end{definition}

\begin{lemma}[Link Lemma]\label{LinkLemma}
Let $\sigma \in M(G)$. Then $\link_{M(G)} \sigma$ is the matching complex
of the subgraph of $G$ spanned by
all edges of $G$ that are not incident to any edge in $\overline{\sigma}$.
\end{lemma}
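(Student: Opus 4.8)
The plan is to prove the Link Lemma directly from the definitions of link and matching complex by establishing a bijection between faces on the two sides. First I would fix notation: let $H$ denote the subgraph of $G$ spanned by all edges of $G$ that are not incident to any edge in $\overline{\sigma}$ (equivalently, the edges of $G$ sharing no vertex with any edge of the matching $\overline{\sigma}$). I want to show $\link_{M(G)}\sigma = M(H)$ as simplicial complexes, which since both are simplicial complexes on subsets of the same ambient vertex set (edges of $G$) amounts to checking they have the same faces.

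For the forward inclusion, I would take an arbitrary face $\tau \in \link_{M(G)}\sigma$. By definition this means $\tau \cap \sigma = \varnothing$ and $\tau \cup \sigma \in M(G)$, so $\overline{\tau \cup \sigma} = \overline{\tau} \cup \overline{\sigma}$ is a matching of $G$. The key observation is that if $\overline{\tau}\cup\overline{\sigma}$ is a matching, then no edge of $\overline{\tau}$ can share a vertex with any edge of $\overline{\sigma}$; hence every edge of $\overline{\tau}$ is an edge of $H$, and moreover $\overline{\tau}$ is itself a matching, so $\tau \in M(H)$. For the reverse inclusion, I would take $\tau \in M(H)$, so $\overline{\tau}$ is a matching consisting of edges of $H$. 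By the definition of $H$, none of these edges is incident to any edge of $\overline{\sigma}$, so $\overline{\tau}\cup\overline{\sigma}$ is again a matching of $G$, giving $\tau\cup\sigma\in M(G)$; and since the edges of $\overline{\tau}$ avoid the vertices of $\overline{\sigma}$, in particular $\tau$ and $\sigma$ are disjoint as edge sets, so $\tau\cap\sigma=\varnothing$. Thus $\tau\in\link_{M(G)}\sigma$.

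The only mild subtlety, and the place I would be careful, is keeping straight the two different notions of ``disjoint'': the faces $\tau$ and $\sigma$ must be disjoint as sets of vertices of $M(G)$ (i.e.\ as sets of edges of $G$), while the defining condition for $H$ is the stronger geometric statement that the underlying edges share no common vertex of $G$. I expect this to be the main (though minor) obstacle: one must note that the matching condition on $\overline{\tau}\cup\overline{\sigma}$ already forces vertex-disjointness, which simultaneously gives both $\tau\cap\sigma=\varnothing$ and membership of each edge of $\overline{\tau}$ in $H$. Once that is pinned down, the two inclusions combine to give $\link_{M(G)}\sigma = M(H)$, completing the proof.
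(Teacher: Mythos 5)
Your proof is correct and follows essentially the same route as the paper's: a direct two-inclusion argument translating between faces of the link and matchings of the subgraph $G_{\overline{\sigma}}$ (your $H$), with the key observation in both directions being that the matching condition on $\overline{\tau}\cup\overline{\sigma}$ is equivalent to vertex-disjointness of the underlying edges. Your explicit remark distinguishing disjointness of $\tau$ and $\sigma$ as vertex sets of $M(G)$ from vertex-disjointness of the corresponding edges in $G$ is a point the paper passes over silently, but it changes nothing of substance.
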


We will be referring to the subgraph mentioned in Lemma~\ref{LinkLemma} a great deal in the proofs, and we will
denote it $G_{\overline{\sigma}}$.

\vspace*{9pt}

\begin{proof}
If $\tau \in \link_{M(G)} \sigma$ then each vertex of $\tau$ forms an edge with
each vertex of $\sigma$.   So the corresponding edges form a matching of $G$,
so $\overline{\tau} \subseteq E(G_{\overline{\sigma}})$.
Thus $\link_{M(G)} \sigma \subseteq M(G_{\overline{\sigma}})$.
Similarly, given a face $\tau \in M(G_{\overline{\sigma}})$, we see that
$\overline{\tau} \cup \overline{\sigma}$ is a matching, and thus $\tau \in \link_{M(G)} \sigma$.
\end{proof}

We will often blur the distinction between the subgraph $G_{\overline{\sigma}}$
and the set of its edges.

\begin{definition}{\em
The {\em join} of two disjoint simplicial complexes $\Delta$ and $\Sigma$ is the simplicial complex $\Delta \ast \Sigma = \{\tau \cup \sigma : \mbox{$\tau\in\Delta$ and $\sigma\in\Sigma$}\}$
}\end{definition}

Joins arise in matching complexes of disconnected graphs. The following lemma follows directly from standard arguments.

\begin{lemma}[Join Lemma] \label{DisjointJoinLemma} 
Let $M(G)$ be the matching complex of a graph $G$.  Then $M(G)=M_1\ast M_2$ for some disjoint simplicial complexes $M_1$ and $M_2$ (neither equal to $\{\emptyset\}$) if and only if there exist nonempty graphs $G_1$ and $G_2$ such that $G$ is the disjoint union of $G_1$ and $G_2$,
$M_1=M(G_1)$, and $M_2=M(G_2)$.
\end{lemma}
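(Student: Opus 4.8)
The plan is to prove the two implications separately, treating the reverse direction as routine bookkeeping and the forward direction as the step that requires the one real idea. For the reverse direction, I would start from a disjoint union $G = G_1 \sqcup G_2$ of nonempty graphs and show directly that $M(G) = M(G_1) \ast M(G_2)$. The key observation is that since $G_1$ and $G_2$ share no vertices, no edge of $G_1$ is incident to any edge of $G_2$; hence a set $\sigma \subseteq E(G)$ is a matching of $G$ if and only if $\sigma \cap E(G_1)$ is a matching of $G_1$ and $\sigma \cap E(G_2)$ is a matching of $G_2$. Expressing each face of $M(G)$ as such a union yields exactly the join $M(G_1) \ast M(G_2)$, with the two factors disjoint because $E(G_1)$ and $E(G_2)$ are disjoint. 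Since each $G_i$ is nonempty it has an edge, so each $M(G_i)$ has a vertex and is not $\{\emptyset\}$. Setting $M_1 = M(G_1)$ and $M_2 = M(G_2)$ finishes this direction.

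For the forward direction, suppose $M(G) = M_1 \ast M_2$ with $M_1, M_2$ disjoint and neither equal to $\{\emptyset\}$. Writing $V_1$ and $V_2$ for the disjoint vertex sets of $M_1$ and $M_2$, the vertex set $E(G)$ of $M(G)$ partitions as $V_1 \sqcup V_2$. Let $G_i$ be the subgraph of $G$ spanned by the edges $\{\overline{v} : v \in V_i\}$. The heart of the argument is to show that $G_1$ and $G_2$ are vertex-disjoint. For this I would use that in a join, every pair $\{u, w\}$ with $u \in V_1$ and $w \in V_2$ is a face: indeed $\{u\} \in M_1$ and $\{w\} \in M_2$, so $\{u,w\} \in M_1 \ast M_2 = M(G)$. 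Translating through the matching-complex dictionary, this says that for every edge $\overline{u}$ of $G_1$ and every edge $\overline{w}$ of $G_2$, the pair $\{\overline{u}, \overline{w}\}$ is a matching of $G$, i.e.\ $\overline{u}$ and $\overline{w}$ share no vertex. Since this holds for all such pairs, $G_1$ and $G_2$ have disjoint vertex sets.

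From vertex-disjointness it follows immediately that $G$ is the disjoint union of $G_1$ and $G_2$: the edge sets partition $E(G)$ by construction, and every vertex of $G$ (there are no isolated ones) lies on an edge of exactly one $G_i$. Because $M_1, M_2 \neq \{\emptyset\}$, each $V_i$ is nonempty, so each $G_i$ has an edge and is a nonempty graph. It remains to identify $M_i$ with $M(G_i)$, and here I would invoke the Observation on induced subcomplexes: $M(G_i)$ is the restriction of $M(G)$ to the vertex set $V_i$, while in the join $M_1 \ast M_2$ the faces contained in $V_1$ are exactly the faces of $M_1$ (any $\tau \cup \rho$ with $\tau \in M_1$, $\rho \in M_2$ lying inside $V_1$ forces $\rho = \varnothing$ since $V_1 \cap V_2 = \varnothing$). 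Hence $M(G_i) = M_i$, completing this direction.

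I expect the only genuinely substantive step to be the vertex-disjointness claim in the forward direction; everything else is translating between matchings of $G$ and faces of $M(G)$. The one subtlety to watch is the degenerate possibility that $M_1$ or $M_2$ equals $\{\emptyset\}$, which the hypothesis explicitly excludes and which corresponds exactly to one of the $G_i$ having no edges.
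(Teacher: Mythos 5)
Your proof is correct and complete. The paper itself offers no proof of this lemma---it only remarks that it ``follows directly from standard arguments''---and your argument is exactly that standard argument: the reverse direction by noting matchings of a disjoint union split across components, and the forward direction via the key observation that every $\{u,w\}$ with $u \in V_1$, $w \in V_2$ is a face of the join, which forces the two edge-spanned subgraphs to be vertex-disjoint, followed by the induced-subcomplex identification $M_i = M(G_i)$.
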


Thus the matching complex of any disconnected graph is connected.   For what graphs are the matching complexes disconnected?  
Here we consider a matching complex to be connected if and only if the 
subcomplex consisting of all its vertices and edges (its ``1-skeleton'') 
is connected. By a ``path'' in a simplicial complex, we mean a path in its 1-skeleton.
A path in a matching complex $M(G)$ corresponds to a sequence of size two matchings in $G$ such that consecutive matchings share an edge.

The following characterization of disconnected matching complexes is equivalent
to \cite[Proposition 3.1]{JJMV}.

\begin{theorem}\label{disc-cor}
A graph $G$ has a disconnected matching complex if and only if $G=C_4$,
$G=K_4$, or $G$ has at least two edges with one edge incident to all other
edges.
\end{theorem}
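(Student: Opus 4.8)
The plan is to work with the $1$-skeleton of $M(G)$, whose vertices are the edges of $G$ and whose edges join pairs of disjoint edges of $G$. As recalled just before the statement, $M(G)$ is connected exactly when this graph is connected, so $M(G)$ is disconnected if and only if $E(G)$ admits a partition $E(G)=A\sqcup B$ into two nonempty parts with the property that every edge in $A$ shares a vertex with every edge in $B$ (equivalently, no edge of $A$ is disjoint from any edge of $B$). I will use this ``cross-intersecting partition'' formulation throughout. For the easy direction I would simply exhibit such a partition in each listed case: if $G$ has an edge $e$ incident to all others (and at least one more edge), take $A=\{e\}$ and $B=E(G)\setminus\{e\}$; for $G=C_4$ the two opposite pairs of cycle-edges give the two parts; and for $G=K_4$ the three perfect matchings split $E(G)$ into cross-intersecting classes. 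In each case the partition is nonempty and cross-intersecting, so $M(G)$ is disconnected.

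The substance is the forward direction. Assume $M(G)$ is disconnected; then $G$ has at least two edges. If some edge of $G$ is incident to every other edge we are immediately in the third case, so I would assume no such \emph{dominating} edge exists, i.e.\ every edge of $G$ is disjoint from at least one other edge, and aim to prove $G=C_4$ or $G=K_4$. Choosing $A$ to be a single connected component of the $1$-skeleton and $B$ its complement, the no-dominating-edge hypothesis forces every component to have at least two vertices and hence to contain an edge of the $1$-skeleton; consequently \emph{both} $A$ and $B$ contain a pair of disjoint edges of $G$.

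Now comes the confinement step. Pick disjoint edges $a_1,a_2\in A$ and relabel their endpoints so that $a_1=12$ and $a_2=34$. Every edge of $B$ meets both $a_1$ and $a_2$ and is distinct from them, so it has exactly one endpoint in $\{1,2\}$ and one in $\{3,4\}$; thus $B\subseteq\{13,14,23,24\}$. Since $B$ also contains a disjoint pair, and the only disjoint pairs among those four edges are $\{13,24\}$ and $\{14,23\}$, after possibly swapping the labels $3$ and $4$ I may assume $13,24\in B$. Running the same argument with $A$ and $B$ interchanged, every edge of $A$ meets both $13$ and $24$, forcing $A\subseteq\{12,14,23,34\}$. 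Hence every edge of $G$ lies on $\{1,2,3,4\}$, so $G$ is a subgraph of $K_4$ containing the four edges $12,34,13,24$, which already form a $4$-cycle; the only possible extra edges are $14$ and $23$. Finally I would discard the intermediate graphs: $K_4$ minus a single edge has a dominating edge (the edge opposite the missing one), contradicting the standing assumption, so $G$ is exactly the $4$-cycle or all of $K_4$.

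I expect the confinement argument of the last paragraph to be the main obstacle: the crux is converting the purely combinatorial cross-intersecting condition, together with the existence of a disjoint pair in \emph{each} part, into the statement that all edges live on four vertices. Once that is in place, the remaining bookkeeping---tracking whether $14$ or $23$ appears and observing that adding exactly one of them recreates a dominating edge---reduces to finite checks on subgraphs of $K_4$.
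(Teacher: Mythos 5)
Your proof is correct and follows essentially the same route as the paper's: both reduce disconnectedness of $M(G)$ to a cross-intersecting partition of $E(G)$, use the no-dominating-edge assumption to extract a disjoint pair of edges from each side, confine all edges of $G$ to the resulting four vertices, and conclude $G=C_4$ or $G=K_4$. Your write-up is only slightly more explicit in the final step, spelling out that $K_4$ minus an edge has a dominating edge, a finite check the paper leaves implicit.
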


\begin{proof}
($\Leftarrow$) The matching complex of $C_4$ is $2P_2$.  The matching
complex of $K_4$ is $3P_2$.  If $G$ has one edge incident to all other
edges, then the matching complex of $G$ has an isolated vertex.

($\Rightarrow$)  Assume $M(G)$ is disconnected, 
and $M_1$ is one component of $M(G)$.
Let $\{u_1,u_2,\ldots, u_k\}$ 
be the set of vertices of $M_1$, and let $\{v_1,v_2,\ldots, v_\ell\}$ 
be the remaining vertices of $M(G)$.
Consider the corresponding edges $\{\overline{u}_1,\overline{u}_2,\ldots,\overline{u}_k\}
\cup \{\overline{v}_1,\overline{v}_2,\ldots,\overline{v}_\ell\}$ of $G$.
Let $G_1$ be the subgraph of $G$ induced by the edges
$\{\overline{u}_1,\overline{u}_2,\ldots,\overline{u}_k\}$.
In $M(G)$, there are no edges between $M_1$ and the rest of $M(G)$, so
every pair of edges $\overline{u}_i$ and $\overline{v}_j$ are incident in $G$.

Suppose that no edge of $G$ is incident to all other edges. It follows that $k \ge 2$ and $\ell \ge 2$. 
Without loss of generality, assume that edges $\overline{u}_1$ and $\overline{u}_2$ are not incident. We can also assume that edges $\overline{v}_1$ and $\overline{v}_2$ are not incident, since otherwise an edge $\overline{v}_i$ would be incident to
all other edges of $G$.
Since $\overline{u}_i$ and $\overline{v}_j$ are incident for all $i$ and $j$, edges $\overline{u}_1$, $\overline{v}_1$, $\overline{u}_2$ and $\overline{v}_2$ form a cycle $C_4$. Any other edge $\overline{u}_i$ has to be incident to both $\overline{v}_1$ and $\overline{v}_2$, and any $\overline{v}_j$ has to be incident to both $\overline{u}_1$ and $\overline{u}_2$. It follows that $|V(G)| =4$.
In that case since $G$ has no edge incident to all other edges, $G$ is
$C_4$ or $K_4$.
\end{proof}

The disconnected matching complexes are all of dimension 0 or 1, and
as follows.  Note that $G\sqcup H$ denotes the disjoint union of
graphs $G$ and $H$, and $nG$ denotes $n$ disjoint copies of
the graph $G$.

\begin{itemize}\label{match-disc}
\item $nP_1$ ($n$ isolated vertices) is the matching complex of $K_{1,n}$.
\item $2P_2$ is the matching complex of $C_4$.
\item $3P_2$ is the matching complex of $K_4$.
\item $P_1 \sqcup K_{m,n}$ is the matching complex of the graph obtained from
      an edge by adding $m$ pendant edges to one of its vertices and $n$ pendant
      edges to its other vertex.
\item $P_1\sqcup (K_{m,n}\setminus H)$, where $H$ is a nonempty set of $r$
      independent edges of $K_{m,n}$, 
      is the matching complex of the graph consisting of $r$ triangles
      sharing an edge $\{x,y\}$, along with $m-r$ pendant
      edges on vertex $x$ and $n-r$ pendant edges on vertex $y$.

\end{itemize}

Jeli\'{c} Milutinovi\'{c}, et al.\ \cite{JJMV} showed that
every connected matching complex has diameter at most 4.  
We use a slightly stronger result.

\begin{proposition}\label{NoPathLemma}
If $M(G)$ is the matching complex of some graph, then $M(G)$ has no induced path
of length 5. 
\end{proposition}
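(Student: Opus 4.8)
The plan is to argue by contradiction. Suppose $M(G)$ has an induced path of length $5$, with consecutive vertices $v_0,v_1,\dots,v_5$, and set $e_i=\overline{v}_i$ for the corresponding edges of $G$. Being an \emph{induced} path translates directly into incidence data: for $0\le i\le 4$ the pair $\{v_i,v_{i+1}\}$ is an edge of $M(G)$, so $e_i$ and $e_{i+1}$ are non-incident in $G$; while for $|i-j|\ge 2$ the pair $\{v_i,v_j\}$ is a non-edge of $M(G)$, so $e_i$ and $e_j$ must be incident. The goal is to show that no six edges of a graph can satisfy all of these relations at once.

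First I would read off the local structure around $e_0$. Write $e_0=\{a,b\}$. Each of $e_2,e_3,e_4,e_5$ is incident to $e_0$, and since $G$ has no multiple edges none of them equals $e_0$; hence each shares exactly one endpoint with $e_0$. Color these four edges by which endpoint of $e_0$ they contain. Two edges sharing the same endpoint of $e_0$ would be incident, so the non-incidences of the pairs $(e_2,e_3)$, $(e_3,e_4)$, $(e_4,e_5)$ force consecutive edges in this list to get different colors. The coloring therefore alternates, and after possibly swapping the names $a$ and $b$ we may assume $e_2,e_4$ contain $a$ and $e_3,e_5$ contain $b$.

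Next I would locate where $e_2$ and $e_5$ meet and then derive the contradiction. Write $e_2=\{a,x\}$ and $e_5=\{b,w\}$, where $x\ne b$ and $w\ne a$ since neither edge equals $e_0$. The required incidence of $e_2$ and $e_5$ forces them to share a vertex, and as $a\ne b$, $x\ne b$, and $w\ne a$, the only remaining possibility is $x=w=:c$. Thus $e_0,e_2,e_5$ form a triangle on $\{a,b,c\}$. Now bring in $e_1$: it is non-incident to $e_0=\{a,b\}$ and to $e_2=\{a,c\}$, so $e_1$ avoids all of $a$, $b$, and $c$. But $e_1$ is required to be incident to $e_5=\{b,c\}$, which is impossible. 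This contradiction finishes the proof.

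I expect the only real difficulty to be keeping the middle step from fragmenting into many incidence cases. The endpoint-coloring device is what controls this: it collapses the whole neighborhood of $e_0$ to a single triangle $\{a,b,c\}$, after which the vertex $e_1$, which must dodge $a,b,c$ yet meet $e_5$, immediately yields the contradiction. Everything else is routine bookkeeping about which endpoints two edges can share.
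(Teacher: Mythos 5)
Your proof is correct, and it takes a genuinely different route from the paper's. The paper examines the three pairwise-incident edges $\overline{1},\overline{3},\overline{5}$ (your $e_0,e_2,e_4$), invokes the dichotomy that three pairwise incident edges of a simple graph form either a triangle or a star $K_{1,3}$, rules out the triangle using $\overline{2}$, and then reconstructs the entire graph spanned by $\overline{1},\dots,\overline{5}$ --- it is the banner graph $\Gamma$, a $4$-cycle with a pendant edge --- before concluding that no sixth edge can satisfy the required incidences. You instead anchor everything at the endpoint edge $e_0$: each of $e_2,e_3,e_4,e_5$ meets $e_0$ in exactly one endpoint, non-incidence of consecutive pairs forces this endpoint-coloring to alternate, so $e_2$ and $e_5$ lie on opposite endpoints of $e_0$, and their forced intersection produces the triangle $\{a,b,c\}$; then $e_1$, which must avoid all of $a,b,c$ yet meet $e_5=\{b,c\}$, yields the contradiction. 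Your argument is shorter: it needs neither the triangle/star dichotomy nor the reconstruction of the banner graph, and it only ever uses the incidences of $e_1$, $e_2$, and $e_5$ with each other and with $e_0$ (plus the alternation through $e_3,e_4$). What the paper's longer route buys is a byproduct that it reuses later: its analysis shows that five edges realizing an induced $P_5$ in a matching complex must span $\Gamma$, and this identification of the banner graph feeds into Table~\ref{PathMatchingComplexes}, the basic ball graphs $\mathcal{BG}$, and the link analysis in Section~\ref{Manifolds_w/_Boundary Section}, whereas your proof delivers only the impossibility statement itself.
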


\begin{proof}
Suppose $M(G)$ contains an induced $P_6$ subgraph.
Label the vertices of this  subgraph in order 1 through 6.
Then $G$ includes six edges $\overline{1}$ through
$\overline{6}$, with incidences the ten pairs of edges $\overline{i}$ and
$\overline{j}$, where $|j-i|\ge 2$.
Note that three edges in $G$ are pairwise incident if and only if the three
form a triangle or the three share a single vertex (forming a star, $K_{1,3}$).
Consider the incidences among edges
$\overline{1}$, $\overline{3}$, and $\overline{5}$. Suppose they form a triangle.  Note that
edge $\overline{2}$ is incident to edge $\overline{5}$, but not to edges $\overline{1}$
or $\overline{3}$.  This is not possible, since each vertex of $\overline{5}$ is
shared with either $\overline{1}$ or $\overline{3}$.  Thus the three edges
$\overline{1}$, $\overline{3}$, and $\overline{5}$ all meet at a single vertex $a$.
Now $\overline{4}$ is incident to $\overline{1}$, but not to $\overline{3}$ or $\overline{5}$,
and $\overline{2}$ is incident to $\overline{4}$ and $\overline{5}$, but not to $\overline{1}$
and $\overline{3}$, so the induced subgraph of $G$ on the vertices contained
in the edges $\overline{1}$ through $\overline{5}$ is a 4-cycle with a pendant edge.
This graph is called the {\em banner graph}.
(See Figure~\ref{MCP5}.)

\begin{figure}[h]
\begin{center}
\begin{tikzpicture}
\draw (2,0)--(4,0) node[draw=none,fill=none,font=\scriptsize,midway,above] {$\overline{3}$};
\draw (0,2)--(2,2) node[draw=none,fill=none,font=\scriptsize,midway,above] {$\overline{4}$};
\draw (0,0)--(2,0) node[draw=none,fill=none,font=\scriptsize,midway,above] {$\overline{5}$};
\draw (0,0)--(0,2) node[draw=none,fill=none,font=\scriptsize,midway,left] {$\overline{2}$};
\draw (2,0)--(2,2) node[draw=none,fill=none,font=\scriptsize,midway,left] {$\overline{1}$};
\filldraw[black] (2,0) circle (2pt);
\filldraw[black] (2,2) circle (2pt);
\filldraw[black] (0,2) circle (2pt);
\filldraw[black] (0,0) circle (2pt);
\filldraw[black] (4,0) circle (2pt);
\end{tikzpicture}
\caption{Banner graph $\Gamma$ whose matching complex is $P_5$\label{MCP5}}
\end{center}
\end{figure}
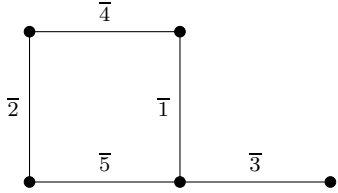

Now edge $\overline{6}$ would need to be incident to all of these edges except
$\overline{5}$, which is not possible.
So there is no graph $G$ whose matching complex has an induced $P_6$.
\end{proof}

Note that every path of length at most 4 is a matching complex, as shown
in Table~\ref{PathMatchingComplexes}.  These are all the matching
complexes that are 1-dimensional manifolds with boundary.

\begin{table}
\begin{center}
\begin{tabular}{c|c}
{\em Graph $G$} & {\em Matching Complex $M(G)$}
\\ \hline
$P_2$ & $P_1$ \\
$2P_2$ & $P_2$ \\
$P_3 \sqcup P_2$ & $P_3$ \\
$P_5$ & $P_4$ \\
$\Gamma$ & $P_5$
\end{tabular}
\caption{Matching complexes that are paths\label{PathMatchingComplexes}}
\end{center}
\end{table}

\subsection{Manifolds}

The main results of this paper concern matching complexes that are
homology spheres and manifolds.  We wish to be clear about what
these are.  We base our treatment of these definitions on \cite{KleeNovik}.

Throughout, we fix a field $\Bbbk$ and perform all homology calculations over $\Bbbk$. The following definitions depend on $\Bbbk$, but we will suppress this throughout the paper.

\begin{definition}{\em
\hspace{1in}

A \emph{homology sphere} is a $d$-dimensional simplicial complex $\Delta$ such that $\link_{\Delta}\sigma$ has the homology of a $(d-\abs{\sigma})$-sphere for all faces $\sigma \in \Delta$.

A \emph{homology manifold} (without boundary) is a $d$-dimensional simplicial complex $\Delta$ such that $\link_{\Delta}\sigma$ has the homology of a $(d-\abs{\sigma})$-sphere for all nonempty faces $\sigma \in \Delta$.

A \emph{homology manifold with boundary} is a $d$-dimensional simplicial complex $\Delta$ such that $\link_{\Delta}\sigma$ has the homology of a $(d-\abs{\sigma})$-sphere or a $(d-\abs{\sigma})$-ball for all nonempty faces $\sigma \in \Delta$, and the set $$\partial \Delta = \{ \sigma \in \Delta : \link_{\Delta}\sigma ~\text{has the homology of a}~ (d-\abs{\sigma})\text{-ball} \} \cup \{ \varnothing \}$$ is a $(d-1)$-dimensional homology manifold.

A \emph{homology ball} is a $d$-dimensional homology manifold with boundary $\Delta$ such that $\link_\Delta(\varnothing)$ has the homology of a $(d-\abs{\sigma})$-ball.
}\end{definition}

We say that $\sigma$ is in the \emph{interior} of $\Delta$ if $\link_\Delta \sigma$ is a homology sphere and on the \emph{boundary} if $\link_\Delta \sigma$ is a homology ball. For $d\ge 1$, the boundary of a homology $d$-manifold is the subcomplex generated by all $(d-1)$-faces that are contained in exactly one $d$-face.

We note that there are two related classes of simplicial complexes that are more restrictive than homology manifolds. A simplicial complex $\Delta$ is a \emph{simplicial manifold} (or \emph{triangulated manifold}) if $|\Delta| \cong M$ for some topological manifold $M$, where $|\Delta|$ is the geometric realization of $\Delta$. A complex $\Delta$ is a \emph{combinatorial manifold} if the link of each nonempty face is PL-homeomorphic to a simplex or boundary of a simplex of the appropriate dimension (see \cite{JKV} for precise definitions).

In dimensions two and lower, these three classes are equivalent. In higher dimensions, there are examples that distinguish them. It is always true that
$$
\textrm{combinatorial manifold} \implies \textrm{simplicial manifold} \implies \textrm{homology manifold}.
$$
Somewhat remarkably, we will prove that all homology manifolds that arise as matching complexes are in fact combinatorial manifolds.

A single vertex is a $0$-ball, and the two-vertex
complex, $2P_1$, is a $0$-sphere.
We will not consider $0$-dimensional complexes with a larger number of vertices
in the context of manifolds, but we have already observed that
$nP_1$ is the matching complex of $K_{1,n}$.

In what follows we will need to recognize homology manifolds with
and without boundary that are joins of lower dimensional homology manifolds.  We use
Theorem~1 from \cite{kwra}, applied in our context.

\begin{prop}\label{JoinManifold} \mbox { }
\begin{enumerate}
\item
Let $X$ be a homology $d$-manifold without boundary such that
$X = \Delta \ast \Sigma$, where $\Delta$ and $\Sigma$ are nonempty simplicial
complexes.  Then $X$ is a homology $d$-sphere, and
$\Delta$ and $\Sigma$ are homology spheres of lower dimension.
\item
Let $X$ be a homology $d$-manifold with boundary such
that ${X = \Delta \ast \Sigma}$, where $\Delta$ and $\Sigma$ are nonempty
simplicial complexes.
Then $X$ is a homology $d$-ball, and $\Delta$ and $\Sigma$ are homology spheres or homology balls,
such that at least one of $\Delta$ or $\Sigma$ is a homology ball.
\end{enumerate}
\end{prop}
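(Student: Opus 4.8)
The plan is to reduce both parts to two structural facts about joins and then do some bookkeeping about spheres and balls. First I would record the link formula: for any face $\tau$ of $\Delta \ast \Sigma$, writing $\tau = \tau_\Delta \cup \tau_\Sigma$ with $\tau_\Delta \in \Delta$ and $\tau_\Sigma \in \Sigma$, one has
\[\link_{\Delta\ast\Sigma}\tau = (\link_\Delta \tau_\Delta) \ast (\link_\Sigma \tau_\Sigma),\]
which is immediate from the definitions of link and join. Second, since all homology is computed over the field $\Bbbk$, the reduced homology of a join obeys the Künneth-type identity
\[\widetilde{H}_n(\Delta \ast \Sigma) \cong \bigoplus_{i+j=n-1} \widetilde{H}_i(\Delta)\otimes_\Bbbk \widetilde{H}_j(\Sigma),\]
with no $\mathrm{Tor}$ terms. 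These are standard, and Proposition~\ref{JoinManifold} is the specialization of Theorem~1 of \cite{kwra} to our simplicial setting; I would present the self-contained argument built from the two displays above.

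For Part~1, set $p = \dim\Delta$ and $q = \dim\Sigma$, so that $d = p+q+1$. The key observation is that if $\sigma$ is a facet of $\Sigma$, then $\link_\Sigma\sigma = \{\emptyset\}$, so the link formula collapses to $\link_{\Delta\ast\Sigma}\sigma = \Delta \ast \{\emptyset\} = \Delta$. Since $X$ is a homology $d$-manifold without boundary and $\sigma$ is a nonempty face, this link must have the homology of a $(d-\abs{\sigma})$-sphere; because $\abs{\sigma} = q+1$ and $d-(q+1)=p$, I conclude that $\Delta$ is a homology $p$-sphere. The symmetric argument, using a facet of $\Delta$, shows $\Sigma$ is a homology $q$-sphere. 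Feeding this into the Künneth identity, the only surviving summand is $i=p,\ j=q$, giving $\widetilde{H}_n(X)\cong\Bbbk$ for $n=d$ and $0$ otherwise; hence $\link_X\varnothing = X$ has the homology of a $d$-sphere. Combined with the manifold hypothesis on the nonempty links, this shows $X$ is a homology $d$-sphere, and both factors have dimension strictly less than $d$ since $p,q\ge 0$.

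For Part~2 the same facet trick applies: $\link_X\sigma = \Delta$ for a facet $\sigma$ of $\Sigma$, but now the manifold-with-boundary hypothesis forces this link to be a homology $p$-sphere or a homology $p$-ball, so $\Delta$ is one of these, and symmetrically $\Sigma$ is a homology $q$-sphere or $q$-ball. To see that at least one factor is a ball, I would argue by contradiction: if both $\Delta$ and $\Sigma$ were homology spheres, then by the Part~1 computation $X$ would be a homology $d$-sphere and hence have empty boundary, contradicting that $X$ is a manifold with (nonempty) boundary. So assume without loss of generality that $\Delta$ is a homology $p$-ball, whence $\widetilde{H}_*(\Delta)=0$. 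Then every summand of the Künneth identity vanishes, giving $\widetilde{H}_*(X)=0$, so $\link_X\varnothing = X$ has the homology of a $d$-ball; by definition $X$ is a homology $d$-ball.

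The main obstacle I anticipate is definitional rather than computational. One must verify carefully that the facet links $\link_\Sigma\sigma$ and $\link_\Delta\rho$ really are the trivial complex $\{\emptyset\}$, so that the join genuinely collapses onto the opposite factor, and one must read the convention ``manifold with boundary'' in Part~2 as having \emph{nonempty} boundary (otherwise a homology sphere would qualify vacuously and the ball conclusion would fail). A secondary point of care is invoking the Künneth formula in its reduced form over the field $\Bbbk$: it is precisely the absence of $\mathrm{Tor}$ terms that makes the homology of the join concentrate in a single sphere class in Part~1 and vanish entirely in Part~2, so the clean sphere/ball dichotomy is genuinely a field phenomenon.
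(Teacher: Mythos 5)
Your route is necessarily different from the paper's, because the paper offers no proof at all: it simply invokes Theorem~1 of \cite{kwra} ``applied in our context.'' So a self-contained argument is welcome, and your two tools (the join formula for links plus the field K\"unneth identity for joins) are the right ones. But note that in Part~1 your facet trick, as written, only verifies the defining condition of ``homology $p$-sphere'' at the empty face: you show $\widetilde{H}_*(\Delta)=\widetilde{H}_*(\link_\Delta\varnothing)$ is that of $S^p$, whereas the paper's definition requires \emph{every} link $\link_\Delta\tau$ to have the homology of a $(p-\abs{\tau})$-sphere. The repair is immediate and uses your own device: for every $\tau\in\Delta$ one has $\link_X(\tau\cup\sigma)=\link_\Delta(\tau)$ with $\sigma$ a facet of $\Sigma$ of maximal dimension $q$ (do insist on maximal dimension; in a nonpure complex a facet need not satisfy $\abs{\sigma}=q+1$), and $d-\abs{\tau\cup\sigma}=p-\abs{\tau}$, so the manifold hypothesis hands you all the link conditions at once. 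With that strengthening, Part~1 is correct, and your contradiction in Part~2 (``both factors spheres $\Rightarrow$ every nonempty link of $X$ has sphere homology by K\"unneth $\Rightarrow$ $\partial X=\{\varnothing\}$'') also goes through; as literally written, deducing ``$X$ is a homology $d$-sphere'' from the \emph{global} homology of the factors does not follow. Your reading of ``manifold with boundary'' as having nonempty boundary is correct: the paper requires $\partial X$ to be $(d-1)$-dimensional, which rules out $\partial X=\{\varnothing\}$.

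The genuine gap is in Part~2, in the step ``so $\Delta$ is one of these.'' What your facet trick yields is that $\Delta$ \emph{has the homology of} a $p$-ball (or sphere), and even after the repair above, that all links of $\Delta$ have ball or sphere homology. But the proposition asserts $\Delta$ \emph{is} a homology ball, and by the paper's definition this requires $\Delta$ to be a homology manifold with boundary, i.e.\ the complex $\partial\Delta$ of ball-type faces must itself be a $(p-1)$-dimensional homology manifold. Your proof never addresses this, and it is not implied by what you established: over $\Bbbk=\mathbb{Q}$, the cone over the $6$-vertex triangulation of $\mathbb{R}P^2$ has every nonempty link with the $\mathbb{Q}$-homology of a ball or sphere and has vanishing reduced homology, yet it is not a homology $3$-ball, because its set of ball-type faces ($\mathbb{R}P^2$ together with the isolated apex) is not a homology $2$-manifold. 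So properties of the kind you derived for $\Delta$ genuinely do not suffice, and extra structure coming from $X$ must be used. One way to close the gap: when, say, $\Sigma$ is closed, your K\"unneth identity applied to links shows a face $\tau_\Delta\cup\tau_\Sigma$ lies in $\partial X$ exactly when $\link_\Delta\tau_\Delta$ has trivial homology, so $\partial X=\partial\Delta\ast\Sigma$; then apply Part~1 to the homology $(d-1)$-manifold without boundary $\partial X$ to conclude $\partial\Delta$ is a homology $(p-1)$-sphere, which finally certifies that $\Delta$ is a homology ball. (The case where both factors carry boundary faces needs a similar but longer analysis, most cleanly organized as an induction on $d$.) Without some argument of this kind, the conclusion that the factors are homology balls, rather than merely complexes with ball homology, is unproven.
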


Hereafter, when we refer to spheres and balls, we always assume that they are homology spheres and balls unless otherwise specified.

With Proposition~\ref{JoinManifold}
in mind, we define two important sets. The first is the set of
{\em basic sphere graphs}:
\begin{align}\label{BasicSphereGraphs}
\mathcal{SG} =  \{P_3,C_5,K_{3,2}\}.
\end{align}

Basic sphere graphs are so named because their matching complexes are spheres.
(These matching complexes are the 0-sphere, $C_5$, and $C_6$, respectively.)
The disjoint unions of these graphs give matching complexes that are higher dimensional spheres.

\begin{proposition}\label{SphereMatchingProp}
Let $G$ be a disjoint union of graphs from $\mathcal{SG}$. Then $M(G)$ is a 
combinatorial sphere.

In particular, let $G=\ell P_3 \sqcup mC_5 \sqcup nK_{3,2}$.  
Then the matching complex $M(G)$ is a combinatorial sphere of dimension
$\ell+2m+2n-1$.
\end{proposition}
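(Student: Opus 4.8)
The plan is to reduce everything to three base cases and then assemble by repeated joins. First I would identify the matching complex of each basic sphere graph directly from the definition. The two edges of $P_3$ share its middle vertex, so they are never independent and $M(P_3)$ is two disjoint vertices, the $0$-sphere $2P_1$. In $C_5$ a matching has at most two edges (so there are no triangles in $M(C_5)$), and the pairs of independent edges form a $5$-cycle, so $M(C_5)$ is the cycle graph $C_5$, a combinatorial $1$-sphere. Similarly $M(K_{3,2})$ is the $6$-cycle $C_6$, again a combinatorial $1$-sphere. In dimensions $0$ and $1$ there is nothing further to verify: a $0$-sphere and any cycle graph are combinatorial spheres. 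Thus the three base complexes are combinatorial spheres of dimensions $0$, $1$, and $1$.

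Next I would apply the Join Lemma (Lemma~\ref{DisjointJoinLemma}) repeatedly. Since $G = \ell P_3 \sqcup m C_5 \sqcup n K_{3,2}$ is a disjoint union, its matching complex is the iterated join
$$
M(G) = \underbrace{M(P_3)\ast\cdots\ast M(P_3)}_{\ell} \ast \underbrace{M(C_5)\ast\cdots\ast M(C_5)}_{m} \ast \underbrace{M(K_{3,2})\ast\cdots\ast M(K_{3,2})}_{n},
$$
that is, the join of $\ell$ copies of the $0$-sphere and $m+n$ copies of the $1$-sphere.

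The key structural input is then the standard fact from piecewise-linear topology that the join of combinatorial spheres is again a combinatorial sphere: if $S$ is a combinatorial $p$-sphere and $T$ a combinatorial $q$-sphere, then $S\ast T$ is a combinatorial $(p+q+1)$-sphere. Applying this inductively across all $\ell+m+n$ factors shows that $M(G)$ is a combinatorial sphere. For the dimension I use $\dim(\Delta\ast\Sigma)=\dim\Delta+\dim\Sigma+1$, so an iterated join of $\ell+m+n$ factors has dimension equal to the sum of the factor dimensions plus $(\ell+m+n-1)$; this gives $\big(0\cdot\ell + 1\cdot(m+n)\big) + (\ell+m+n-1) = \ell+2m+2n-1$, as claimed. (When $\ell=m=n=0$ the statement holds degenerately with the empty complex $\{\varnothing\}$ read as $S^{-1}$.)

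The main obstacle — indeed essentially the only non-routine ingredient — is the PL join fact for combinatorial spheres; the base-case identifications and the dimension bookkeeping are elementary. One must take care that \emph{combinatorial sphere} means PL-homeomorphic to the boundary of a simplex rather than simplicially isomorphic to it: already $M(P_3)\ast M(P_3) = S^0\ast S^0$ is the $4$-cycle $C_4$, which is only PL-homeomorphic, and not isomorphic, to $\partial\Delta^2 = C_3$. Accordingly the induction should be carried out entirely at the level of PL-homeomorphism.
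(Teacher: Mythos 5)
Your proposal is correct and follows essentially the same route as the paper: identify $M(P_3)$, $M(C_5)$, $M(K_{3,2})$ as the $0$-sphere, $C_5$, and $C_6$, use the Join Lemma to write $M(G)$ as the join of $\ell$ copies of $S^0$ and $m+n$ copies of $S^1$, and invoke the PL fact that a join of combinatorial spheres is a combinatorial sphere of the expected dimension. The paper states this in two sentences; your additional care with the base cases, the dimension bookkeeping, and the distinction between PL-homeomorphism and simplicial isomorphism is sound but not a different argument.
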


\begin{proof}
The matching complex of this graph is the join of $\ell$ copies of
$S^0$ and $m+n$ copies of $S^1$.  This join is a combinatorial sphere of 
dimension $\ell+2m+2n-1$.
\end{proof}

Note, in particular, that the matching complex of $\ell P_3$ is the
boundary complex of an $\ell$-dimensional crosspolytope (generalized
octahedron).

Before defining the second set, we will note a particular sequence of graphs, generalizing $P_5$,
and their matching complexes.

\begin{definition}{\em The {\em spider} $\mbox{Sp}_k$ is the graph obtained by
identifying one end vertex of each of $k$ copies of $P_3$.
}\end{definition}

(In the literature, ``spider'' usually refers to  a more general class of
graphs, allowing legs of different lengths.)

We can visualize $\mbox{Sp}_k$ as having one central vertex $x$ with $k$
``legs'' of length~2 emanating from that vertex.
(The graph $\mbox{Sp}_2$ is just $P_5$.)
Label the edges of the $i$th leg $\overline{u}_i$ and $\overline{v}_i$, with $\overline{u}_i$
containing the central vertex $x$.
The set $\{\overline{v}_1,\overline{v}_2,\ldots, \overline{v}_{k}\}$ is a maximal
matching of $\mbox{Sp}_k$.
Since no two $\overline{u}_i$s are in any matching,  the other maximal matchings are
obtained from
$\{\overline{v}_1,\overline{v}_2,\ldots, \overline{v}_{k}\}$ by replacing a single
$\overline{v}_i$ by the neighboring $\overline{u}_i$.
The matching complex
$M(\mbox{Sp}_{k})$ thus has a central $(k-1)$-simplex
$C=\{v_1,v_2,\ldots, v_{k}\}$, and $k$ other facets containing the vertex $u_i$
and intersecting $C$ at $C\setminus\{v_i\}$.
This is a $(k-1)$-ball, thus a manifold with boundary.

We now can define the set of \emph{basic ball graphs}
\begin{align}\label{BasicBallGraphs}
\mathcal{BG} = \{ P_2, \Gamma, \mbox{Sp}_k\},
\end{align}
where $\Gamma$ is the banner graph pictured in Figure~\ref{MCP5} and $\mbox{Sp}_k$ is as defined above, for all $k\ge 2$.
We note again that $P_5 = \mbox{Sp}_2$, so this graph is contained in $\mathcal{BG}$ as well.

We have already seen that disjoint unions of
graphs from $\mathcal{SG}$ produce matching complexes that are spheres.
Here is the analogous result for balls.

\begin{prop}\label{BallProp}
Let $G$ be a disjoint union of graphs from $\mathcal{BG}$ and $\mathcal{SG}$ with at least one component from $\mathcal{BG}$. 
Then $M(G)$ is a combinatorial ball.

In particular, let $G= i P_2 \sqcup j \Gamma \sqcup \bigsqcup_{d \geq 2} k_d \mbox{\emph{Sp}}_{d} \sqcup \ell P_3 \sqcup mC_5 \sqcup nK_{3,2}$, with
$i+j+\sum k_d \ge 1$.
Then the matching complex $M(G)$ is a combinatorial ball of dimension
$i + 2j + \sum_{d} d k_d + \ell + 2m + 2n - 1$.
\end{prop}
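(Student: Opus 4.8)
The plan is to combine the Join Lemma (Lemma~\ref{DisjointJoinLemma}) with the classical behavior of joins of PL balls and spheres. Writing $G$ as the stated disjoint union and applying Lemma~\ref{DisjointJoinLemma} once for each component, we obtain that $M(G)$ is the join of the matching complexes of all the components of $G$; since the join is associative and commutative up to isomorphism, the order of the factors is immaterial. Thus it suffices to identify each factor and then to track what a join of such factors must be.

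The individual factors are already understood from earlier in the paper. For the sphere graphs, $M(P_3) = S^0$, while $M(C_5)$ and $M(K_{3,2})$ are each the $1$-sphere (the hexagon $C_6$ in the latter case); each of these is a combinatorial sphere, of dimension $0$, $1$, and $1$ respectively, as recorded in Proposition~\ref{SphereMatchingProp}. For the ball graphs, $M(P_2) = P_1$ is a combinatorial $0$-ball, $M(\Gamma) = P_5$ is a combinatorial $1$-ball (Table~\ref{PathMatchingComplexes}), and the discussion preceding~\eqref{BasicBallGraphs} shows that $M(\mbox{Sp}_d)$ is a combinatorial $(d-1)$-ball. Hence $M(G)$ is a join of combinatorial balls and spheres in which, by the hypothesis $i + j + \sum_d k_d \ge 1$, at least one factor is a ball.

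To finish, I would invoke the standard PL facts that the join of a combinatorial $p$-sphere and a combinatorial $q$-sphere is a combinatorial $(p+q+1)$-sphere, and that the join of a combinatorial $p$-ball with a combinatorial $q$-ball or $q$-sphere is a combinatorial $(p+q+1)$-ball. Applying these repeatedly -- the presence of at least one ball factor guaranteeing that the result is a ball rather than a sphere -- shows that $M(G)$ is a combinatorial ball. The dimension is then bookkeeping: a join of complexes of dimensions $d_1, \ldots, d_r$ has dimension $\sum_t d_t + (r-1)$, so summing the factor dimensions listed above and adding $r - 1$, where $r = i + j + \sum_d k_d + \ell + m + n$ is the number of components, yields exactly $i + 2j + \sum_d d\,k_d + \ell + 2m + 2n - 1$.

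The step that carries the real content -- and the only place to be careful -- is the invocation of the PL join theorem: one must use the combinatorial (PL) versions of these statements, not merely their homological shadows, in order to conclude that $M(G)$ is a combinatorial ball rather than only a homology ball. Since each basic factor has already been verified to be a combinatorial ball or sphere, this causes no real difficulty, but it is the point on which the strength of the conclusion rests.
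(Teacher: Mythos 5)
Your proposal is correct and follows essentially the same approach as the paper: decompose $M(G)$ via the Join Lemma into a join of the matching complexes of the components, identify each factor as a combinatorial ball ($B^0$ for $P_2$, $B^1$ for $\Gamma$, $B^{d-1}$ for $\mbox{Sp}_d$) or combinatorial sphere ($S^0$ for $P_3$, $S^1$ for $C_5$ and $K_{3,2}$), and conclude by the standard PL join facts. Your added emphasis on needing the PL (not merely homological) versions of the join statements, and your explicit dimension bookkeeping, simply spell out details the paper leaves implicit.
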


\begin{proof}
The matching complex of this graph is the join of $i$ copies of $B^0$, $j+k_2$ copies of $B^1$, $k_d$ copies of $B^{d-1}$ (for $d >2$), $\ell$ copies of
$S^0$, and $m+n$ copies of $S^1$.  This join is a combinatorial ball of dimension $i + 2j + \sum_{d} d k_d + \ell + 2m + 2n - 1$.
\end{proof}

In the following sections, we will show that Propositions~\ref{SphereMatchingProp} and \ref{BallProp} provide the only way to construct spheres and balls as matching complexes. Moreover, outside of dimension 2, we will show that these propositions produce all possible manifold matching complexes.

\section{Low-dimensional spheres}\label{Sphere Section}

We now focus on the following question:
\begin{quote}
For which graphs $G$ is the matching complex $M(G)$ a homology sphere?
\end{quote}
We give the complete answer when $0\le d\le 2$ in this section and finish
the story for higher dimensions in Section~\ref{Manifolds_w/o_boundary Section}.
From now on the proofs generally involve detailed case analysis.  In some 
cases the reader should consult the figures to identify the labeled vertices
and edges.

Recall that  we assume our graphs are simple (having no loops or multiple
edges) and have no isolated vertices.

The 0-dimensional sphere is simply the complex
consisting of two isolated points.  As a matching complex, this
represents two edges of the graph that together do not form a matching.
In other words, $G=P_3$.

A triangulated 1-dimensional sphere is $C_n$ for some
$n\ge 3$.  We obtain the complete classification in this case.

\begin{theorem}\label{1-sphere}
The matching complexes that are 1-spheres are $C_4$, $C_5$ and $C_6$.
The only graphs giving these matching complexes are $2P_3$, $C_5$ and $K_{3,2}$,
respectively.
\end{theorem}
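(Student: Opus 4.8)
The plan is to split the statement into two halves: first pin down which cycles can occur as $M(G)$, and then, for each surviving cycle, reconstruct every graph that realizes it. A $1$-dimensional homology sphere is a cycle $C_n$ for some $n\ge 3$, so the first task is to rule out all $n$ except $4,5,6$. Since every matching complex is flag (Proposition~\ref{flag}) and $C_3$ has a missing face of size $3$ (its three vertices span no triangle), $C_3$ is not a matching complex. At the other end, for $n\ge 7$ the six consecutive vertices $1,\dots,6$ of $C_n$ span an induced $P_6$ (the only possible extra edge, between $1$ and $6$, is absent once $n\ge 7$), which is forbidden by Proposition~\ref{NoPathLemma}. This leaves exactly $n\in\{4,5,6\}$.

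For the reconstruction I would use the following dictionary. If $M(G)=C_n$, then $G$ has exactly $n$ edges, and two edges $\overline{u},\overline{v}$ are incident in $G$ precisely when $u,v$ are non-adjacent vertices of $C_n$; equivalently, the incidence (line) graph of $G$ is the complement $\overline{C_n}$. Moreover $C_5$ and $C_6$ are not joins of nonempty complexes, so Lemma~\ref{DisjointJoinLemma} forces $G$ to be connected in those two cases. From here each cycle is handled separately. For $C_4$, the complement $\overline{C_4}$ is $2P_2$: the four edges of $G$ split into two incident pairs with no other incidences, each pair is a $P_3$, and the absence of cross-incidences makes the two copies vertex-disjoint, so $G=2P_3$. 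For $C_5$, every edge is incident to exactly two others; a vertex of degree $\ge 3$ would force its edges to exhaust their incidences among themselves, stranding the remaining two edges with too few incidences, so $G$ has maximum degree $2$. A connected graph with maximum degree $2$ and five edges is $P_6$ or $C_5$, and $M(P_6)$ is two-dimensional (three alternate edges of $P_6$ form a matching, hence a $2$-simplex), so $G=C_5$.

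The case $C_6$ is where the real work lies, and I expect it to be the main obstacle. Here $\overline{C_6}$ is the triangular prism, whose two triangles correspond to two triples of pairwise-incident edges of $G$; each such triple is either a triangle or a star $K_{1,3}$, by the observation (used already in Proposition~\ref{NoPathLemma}) that three pairwise-incident edges either close a triangle or share a vertex. Using the three cross-incidences supplied by the prism's matching, I would first rule out a triangle triple: an edge required to meet one side of such a triangle while avoiding the other two sides has nowhere to attach. Hence both triples are stars, centered at distinct vertices $a$ and $b$; tracking the shared vertex of each cross-incidence then identifies the two stars as $\{a\alpha,a\beta,a\gamma\}$ and $\{b\alpha,b\beta,b\gamma\}$, so that $G=K_{3,2}$.

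A uniform alternative to this last step would be to invoke Whitney's theorem that a connected graph is recovered from its line graph (the prism being neither $K_3$ nor $K_{1,3}$), but the elementary incidence-chasing keeps the argument self-contained. In every case one must finally check that the reconstructed graph introduces no spurious incidences, so that $M(G)$ equals $C_n$ exactly rather than some larger complex; this is a routine verification of the shared-vertex pattern for each of $2P_3$, $C_5$, and $K_{3,2}$.
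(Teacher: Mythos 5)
Your proposal is correct and follows essentially the same route as the paper: flagness rules out $C_3$, the induced-$P_6$ obstruction (Proposition~\ref{NoPathLemma}) rules out $n\ge 7$, and the surviving cases $n=4,5,6$ are reconstructed from the incidence pattern, with the triangle-versus-star dichotomy handling $C_6$ exactly as in the paper's proof. Your treatment of $n=5$ (connectivity via the Join Lemma plus a maximum-degree argument) fills in details the paper leaves implicit in its one-line assertion, but this is a matter of exposition rather than a genuinely different argument.
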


\begin{proof}
We consider 1-dimensional matching complexes that are $n$-cycles $C_n$
for $n\ge 3$.

${\bf n=3}$.
By Proposition \ref{flag}, a matching complex cannot be the 1-dimensional
complex $C_3$.

${\bf n=4}$. Label the vertices of $C_4$ in cyclic order 1 through 4.
If $M(G)=C_4$, then $G$ has four edges $\overline{1}$ through $\overline{4}$, with
edges $\overline{1}$ and $\overline{3}$ incident and edges $\overline{2}$ and $\overline{4}$
incident (and no other incidences among edges).  Thus $G$ is the disjoint
union of two paths, one with edges
$\overline{1}$ and $\overline{3}$, and the other with edges $\overline{2}$ and $\overline{4}$.
Thus, $M(G)=C_4$ if and only if $G=2P_3$.

${\bf n=5}$.  Label the vertices of $C_5$ in cyclic order 1 through 5.
If $M(G)=C_5$, then $G$ has five edges $\overline{1}$ through $\overline{5}$, with
incidences of exactly five pairs of edges:
edges $\overline{1}$ and $\overline{3}$;
edges $\overline{3}$ and $\overline{5}$;
edges $\overline{5}$ and $\overline{2}$;
edges $\overline{2}$ and $\overline{4}$; and
edges $\overline{4}$ and $\overline{1}$.
Thus, $M(G)=C_5$ if and only if $G=C_5$.

${\bf n=6}$.  Label the vertices of $C_6$ in cyclic order 1 through 6.
If $M(G)=C_6$, then $G$ has six edges $\overline{1}$ through
$\overline{6}$, with incidences of exactly nine pairs of edges:
edges $\overline{1}$ and $\overline{3}$;
edges $\overline{3}$ and $\overline{5}$;
edges $\overline{5}$ and $\overline{1}$;
edges $\overline{2}$ and $\overline{4}$;
edges $\overline{4}$ and $\overline{6}$;
edges $\overline{6}$ and $\overline{2}$;
edges $\overline{1}$ and $\overline{4}$;
edges $\overline{2}$ and $\overline{5}$; and
edges $\overline{3}$ and $\overline{6}$.
Recall that three edges in $G$ are pairwise incident if and only if the three
form a triangle or the three share a single vertex (forming a star, $K_{1,3}$).
As in the proof of Proposition~\ref{NoPathLemma}, the edges
$\overline{1}$, $\overline{3}$ and $\overline{5}$ cannot form a triangle, so must meet
at a single vertex $a$.  Similarly for the edges
$\overline{2}$, $\overline{4}$ and $\overline{6}$, which must meet at a single vertex $b$.
The three remaining incidences identify the endpoints (other than $a$
and $b$) of edges $\overline{1}$ and $\overline{4}$,
of edges $\overline{2}$ and $\overline{5}$, and
of edges $\overline{3}$ and $\overline{6}$.
The resulting graph is $K_{3,2}$ (Figure~\ref{MCC6}).
Thus, $M(G)=C_6$ if and only if $G=K_{3,2}$.

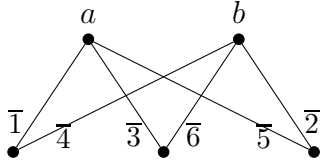
\begin{figure}[h]
\begin{center}
\begin{tikzpicture}
\node[vertex] (a) at (0,0) {};
\node[vertex] (b) at (2,0) {};
\node[vertex] (c) at (4,0) {};
\node[vertex] (d) at (1,1.5) [label={above:$a$}]{};
\node[vertex] (e) at (3,1.5) [label={above:$b$}]{};
\draw (a)--(d) node[draw=none,fill=none,font=\small,near start,left] {$\overline{1}$};
\draw (b)--(d) node[draw=none,fill=none,font=\small,very near start,left] {$\overline{3}$};
\draw (c)--(d) node[draw=none,fill=none,font=\small,very near start,left] {$\overline{5}$};
\draw (a)--(e) node[draw=none,fill=none,font=\small,very near start,right ] {$\overline{4}$};
\draw (b)--(e) node[draw=none,fill=none,font=\small,very near start,right] {$\overline{6}$};
\draw (c)--(e) node[draw=none,fill=none,font=\small,near start,right] {$\overline{2}$};
\end{tikzpicture}
\caption{Graph $K_{3,2}$ whose matching complex is $C_6$\label{MCC6}}
\end{center}
\end{figure}

${\bf n\ge 7}$.  Every cycle $C_n$ with $n\ge 7$ has an induced $P_6$
subgraph, and so is not the matching complex of a graph by
Proposition~\ref{NoPathLemma}.

Therefore, the cycles that are matching complexes are $C_4$, $C_5$,
and $C_6$, and the corresponding graphs are $2P_3$, $C_5$, and $K_{3,2}$.
\end{proof}

We note that the graphs that appear in Theorem~\ref{1-sphere} are disjoint unions of graphs from $\mathcal{SG}$, i.e., the basic sphere graphs.
In particular, we see that all $1$-spheres are constructed using Proposition~\ref{SphereMatchingProp}.

Similarly, we can use Proposition~\ref{SphereMatchingProp} to produce $2$-spheres.
That Proposition gives exactly three 2-spheres, the matching complexes of
$3P_3$, $P_3\sqcup C_5$ and $P_3\sqcup K_{3,2}$.  These graphs are the
disjoint union of $P_3$ with the graphs of Theorem~\ref{1-sphere}, so the
matching complexes are the bipyramids over $C_4$, $C_5$ and $C_6$.
The following theorem shows that this is in fact the only way to realize the $2$-sphere as a matching complex.

\begin{theorem}\label{2-sphere}
Let $G$ be a simple graph such that $M(G)$ is a 2-sphere. 
Then $G\in\{3P_3, P_3\sqcup C_5, P_3\sqcup K_{3,2}\}$ and $M(G)$ is the
boundary of the bipyramid over $C_n$ for $n\in \{4,5,6\}$.

In particular, if the matching complex of a simple graph $G$ is a
2-sphere, then $G$ is not connected.
\end{theorem}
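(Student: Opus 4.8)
The plan is to recover $G$ from the structure of the vertex links of $M(G)$. Since $M(G)$ is a homology $2$-sphere, for every vertex $v$ the link $\link_{M(G)} v$ is a homology $1$-sphere; as a $1$-dimensional complex all of whose vertex links are $0$-spheres, it is a single cycle $C_n$. By the Link Lemma (Lemma~\ref{LinkLemma}) this cycle equals $M(G_{\overline v})$, so Theorem~\ref{1-sphere} forces $n \in \{4,5,6\}$ and $G_{\overline v} \in \{2P_3,\ C_5,\ K_{3,2}\}$, where $G_{\overline v}$ is the subgraph spanned by the edges of $G$ disjoint from $\overline v$. Thus every vertex of $M(G)$ has degree $4$, $5$, or $6$, and I know precisely which small graph sits ``opposite'' each edge of $G$.

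The goal is then to show that $M(G)$ is a bipyramid (suspension) $S^0 \ast C_n$, from which everything follows. I would fix an edge $\overline v = \{x,y\}$ and split the edges of $G$ into three groups: $\overline v$ itself; the edges incident to $x$ or to $y$ (these are exactly the vertices of $M(G)$ \emph{non}-adjacent to $v$); and the edges of $G_{\overline v}$ (the neighbors of $v$, forming the cycle $C_n$). The key claim is that there is exactly one edge $f$ incident to $\overline v$, that $f$ meets $\overline v$ in a single endpoint, and that $\{\overline v, f\}$ is a $P_3$-component disjoint from $G_{\overline v}$. Granting this, $G = P_3 \sqcup G_{\overline v}$ with $G_{\overline v} \in \{2P_3, C_5, K_{3,2}\}$, so $G \in \{3P_3,\ P_3 \sqcup C_5,\ P_3 \sqcup K_{3,2}\}$; the Join Lemma (Lemma~\ref{DisjointJoinLemma}) together with Proposition~\ref{SphereMatchingProp} then identifies $M(G) = M(P_3) \ast M(G_{\overline v}) = S^0 \ast C_n$ as the boundary of the bipyramid over $C_n$, and in particular $G$ is disconnected.

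The crux, and the main obstacle, is ruling out ``extra'' edges: edges incident to $\overline v$ beyond the single capping edge $f$, and edges joining $x$ or $y$ to a vertex already met by $G_{\overline v}$. I would combine three features of $M(G)$. First, the manifold condition: every edge of $M(G)$ lies in exactly two triangles and every vertex link is a single cycle, so deleting the open star of $v$ leaves a disk whose boundary is $\link_{M(G)} v = C_n$, and the non-neighbors of $v$ must triangulate this disk as a single cone over its boundary. Second, $M(G)$ is flag (Proposition~\ref{flag}) and has no induced $P_6$ (Proposition~\ref{NoPathLemma}), which sharply restricts how that complementary disk can be filled. Third, the explicit combinatorics of the three candidates for $G_{\overline v}$: any extra edge at $x$ or $y$, say $\{x,z\}$, is disjoint from a determined subset of the edges of $G_{\overline v}$, so I can read off exactly which vertices of the cycle $\link_{M(G)} v$ it is adjacent to. Checking each case $G_{\overline v} \in \{2P_3, C_5, K_{3,2}\}$, I expect such an extra edge to force some edge of $M(G)$ into the wrong number of triangles, or to create a vertex whose link is not a cycle, contradicting the $2$-sphere hypothesis; this leaves only the single apex edge $f$ disjoint from all of $G_{\overline v}$, completing the bipyramid.

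I anticipate the case $G_{\overline v} = 2P_3$ (link $C_4$, the octahedron) to be the most delicate, since $C_4$ is the shortest equatorial cycle and leaves the most room for spurious diagonals or additional apex-like vertices. There the flag condition, which forbids a diagonal of the ``empty square'' from being an edge of $M(G)$, is what I expect to do the decisive work in pinning the complementary disk down to a single cone.
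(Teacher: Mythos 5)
Your setup (vertex links are $C_4$, $C_5$, or $C_6$ by Lemma~\ref{LinkLemma} and Theorem~\ref{1-sphere}, hence $G_{\overline{v}}\in\{2P_3,C_5,K_{3,2}\}$) matches the paper, and your argument in the case where some edge $f$ incident to $\overline{v}$ is disjoint from $G_{\overline{v}}$ is sound: flagness gives the cone $f\ast C_n$, and a $2$-sphere cannot properly contain a $2$-sphere, so $M(G)$ is the bipyramid. The problem is your key claim --- that for an \emph{arbitrarily fixed} $\overline{v}$ the only edge meeting $\overline{v}$ is a single such $f$, equivalently that the complementary disk of $v$ is a single cone --- which is false, so the case analysis you defer (``I expect such an extra edge to force \dots a contradiction'') can never be completed. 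Concretely, take $G=P_3\sqcup K_{3,2}$, one of the graphs the theorem allows, with the $K_{3,2}$ having parts $\{x,p_2,p_3\}$ and $\{y,q_2\}$, and fix $\overline{v}=\{x,y\}$. Then $G_{\overline{v}}=2P_3$ (the path $p_2q_2p_3$ together with the separate $P_3$ component), and there are \emph{three} edges incident to $\overline{v}$, namely $\{x,q_2\}$, $\{y,p_2\}$, $\{y,p_3\}$, every one of which lands on a vertex of $G_{\overline{v}}$; yet $M(G)$ is the bipyramid over $C_6$, a genuine $2$-sphere in which every edge lies in exactly two triangles and every vertex link is a cycle, so none of the local contradictions you anticipate can materialize. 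The same failure occurs for $P_3\sqcup C_5$ with $\overline{v}$ chosen on the $C_5$. In a bipyramid only the two apexes have cone-like complementary disks; for an equatorial vertex $v$ (degree $4$, link $C_4$ --- exactly the subcase you flag as most delicate) the disk has $n-3$ interior vertices, not one. So ``extra'' edges into $V(G_{\overline{v}})$ must not be ruled out; a correct analysis has to \emph{reconstruct} $C_5$ or $K_{3,2}$ from them.

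This missing analysis is precisely the substance of the theorem, and it is what the paper supplies. The paper does not argue from an arbitrary $v$: it invokes Eberhard's theorem to produce a vertex of degree at most $5$ and then splits on the degrees of that vertex and its neighbors. The two realizable configurations (Case I: a vertex all of whose neighbors have degree $4$; Case IV: no degree-$5$ vertices) end with essentially your bipyramid argument, while the impossible mixed configurations (adjacent vertices of degrees $5,5$ or $5,6$) are eliminated not by local triangle or link counts but by a global tool: the $1$-skeleton of a $2$-sphere is planar, and the forced edges produce a $K_5$ minor. To repair your proof you would need either a mechanism, playing the role of Eberhard's theorem, that lets you choose $v$ to be an apex before running your cone argument, or to redo the extra-edge analysis with the corrected goal of showing that such edges force $G\in\{P_3\sqcup C_5,\ P_3\sqcup K_{3,2}\}$ rather than a contradiction.
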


\begin{proof}
Assume $G$ is a simple graph with no isolated vertices, and its matching
complex $M=M(G)$ is a 2-sphere.
We say that a vertex of $M$ has degree $k$ in $M$ if it is contained in
exactly $k$ edges of $M$; in this case the link
of the vertex in $M$ is a $k$-cycle.
By Lemma~\ref{LinkLemma}, the link is itself a matching complex,
and so by Theorem ~\ref{1-sphere}, $k$ must be 4, 5, or 6.
By Eberhard's Theorem (1891; see
\cite[Theorem 1 in Section 13.3]{grunbaum}) a  2-sphere
must have some vertex of degree at most 5.
We consider cases, based on the degrees of vertices.

\textbf{Case I}.  $M$ has a vertex, all of whose neighbors have degree 4.

If $M$ is not covered by Case I, and has a vertex of degree 5, then
such a vertex is adjacent to at least one vertex of degree 5 or 6.

\textbf{Case II}. $M$ has a pair of adjacent vertices, each of degree 5.

\textbf{Case III}.  $M$ has a pair of adjacent vertices, one of degree 5,
one of degree~6.

\textbf{Case IV}.  $M$ has no vertex of degree 5.

\textbf{Case I}.
Let $v$ be a vertex of $M$ such that all neighbors of
$v$ have degree 4. We show that  $M$ is the boundary of a bipyramid over
$C_4$, $C_5$, or $C_6$.

Suppose the neighbors of $v$ are (in cyclic order) $i$, $1\le i\le k$,
where $4\le k \le 6$.  
Each edge $\{i,i+1\}$ (and $\{k,1\}$) is
in a unique triangle not containing $v$.
Say that $\{k,1\}$ is in triangle $\{k,1,w\}$ ($w\ne v$).  The four
edges containing vertex $1$ are $\{v,1\}$, $\{k,1\}$, $\{w,1\}$, and $\{2,1\}$.  Each
pair of consecutive edges spans a triangle of the complex.  So
$\{1,2,w\}$ is a triangle in $M$.  Repeating this argument, we see that
for all $i$, the edge $\{i,i+1\}$ forms a triangle with this vertex $w$.
So $M$ contains the boundary of the bipyramid with base $C_n$ (vertices $i$)
and apices $v$ and $w$.  A 2-sphere cannot properly contain a 2-sphere as
a subcomplex, so $M$ is in fact the boundary of the bipyramid.

\textbf{Case II}.  Let $u$ and $v$ be adjacent vertices of $M$ of degree 5.
The links of $u$ and $v$ are induced $C_5$'s; the corresponding
subgraphs in $G$ are copies of $C_5$, which share edges $\overline{1}$ and $\overline{4}$.
See Figure~\ref{G-case2}.

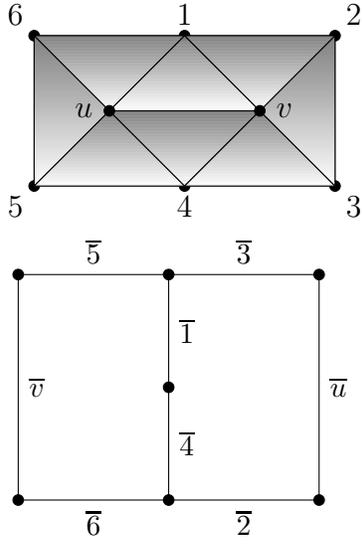
\begin{figure}[p]
\begin{center}
\begin{tikzpicture}
\draw (0,0) node[below left] {$5$}--
      (2,0) node[below] {$4$}--
      (3,1) node[right] {$v$}--
      (2,2) node[above] {$1$}--
      (0,2) node[above left] {$6$}--cycle;
\draw (2,0) --
      (4,0) node[below right] {$3$}--
      (4,2) node[above right] {$2$}--
      (2,2)--
      (1,1) node[left] {$u$}--cycle;
\draw (0,0)--(1,1)--(0,2);
\draw (4,0)--(3,1)--(4,2);
\draw (1,1)--(3,1);
\filldraw[black] (0,0) circle (2pt);
\filldraw[black] (0,2) circle (2pt);
\filldraw[black] (1,1) circle (2pt);
\filldraw[black] (2,0) circle (2pt);
\filldraw[black] (2,2) circle (2pt);
\filldraw[black] (3,1) circle (2pt);
\filldraw[black] (4,0) circle (2pt);
\filldraw[black] (4,2) circle (2pt);
\shadedraw (0,0)--(1,1)--(0,2)--(0,0);
\shadedraw (0,0)--(2,0)--(1,1)--(0,0);
\shadedraw (2,0)--(3,1)--(1,1)--(2,0);
\shadedraw (2,0)--(4,0)--(3,1)--(2,0);
\shadedraw (4,0)--(4,2)--(3,1)--(4,0);
\shadedraw (0,2)--(1,1)--(2,2)--(0,2);
\shadedraw (1,1)--(3,1)--(2,2)--(1,1);
\shadedraw (3,1)--(4,2)--(2,2)--(3,1);
\node[vertex] (u) at (1,1) [label={left:$u$}]{};
\node[vertex] (v) at (3,1) [label={right:$v$}]{};
\end{tikzpicture}

\begin{tikzpicture}
\draw (0,0)--(2,0) node[draw=none,fill=none,font=\small,midway,below] {$\overline{6}$};
\draw (2,0)--(4,0) node[draw=none,fill=none,font=\small,midway,below] {$\overline{2}$};
\draw (0,3)--(2,3) node[draw=none,fill=none,font=\small,midway,above] {$\overline{5}$};
\draw (2,3)--(4,3) node[draw=none,fill=none,font=\small,midway,above] {$\overline{3}$};
\draw (0,0)--(0,3) node[draw=none,fill=none,font=\small,midway,right] {$\overline{v}$};
\draw (2,0)--(2,1.5) node[draw=none,fill=none,font=\small,midway,right] {$\overline{4}$};
\draw (2,1.5)--(2,3) node[draw=none,fill=none,font=\small,midway,right] {$\overline{1}$};
\draw (4,0)--(4,3) node[draw=none,fill=none,font=\small,midway,right] {$\overline{u}$};
\filldraw[black] (0,0) circle (2pt);
\filldraw[black] (2,0) circle (2pt);
\filldraw[black] (4,0) circle (2pt);
\filldraw[black] (0,3) circle (2pt);
\filldraw[black] (2,3) circle (2pt);
\filldraw[black] (4,3) circle (2pt);
\filldraw[black] (2,1.5) circle (2pt);
\end{tikzpicture}
\caption{Subcomplex of $M$ and Subgraph of $G$ for Case II \label{G-case2}}
\end{center}
\end{figure}

We will show that $M$ must contain the edges $36$ and $25$. If $M$ did not contain $36$, then edges
$\overline{3}$ and $\overline{6}$ would be incident in $G$.  If the edge $\overline{6}$  were
incident with $\overline{3}$, then it would also be incident with either $\overline{1}$
or $\overline{u}$, but this is not possible, because $16$ and $6u$ are edges
of $M$.  Thus $\overline{3}$ and $\overline{6}$ form a matching in $G$, and so 36 is
an edge of $M$.  Similarly 25 is an edge of $M$.  But then the vertices
and edges  of $M$ contain a $K_5$ minor and so form a nonplanar graph.
Thus Case II cannot happen.

\textbf{Case III}.  This case is similar to Case II.
A subcomplex of $M$
and its corresponding subgraph are shown in Figure~\ref{G-case3}.

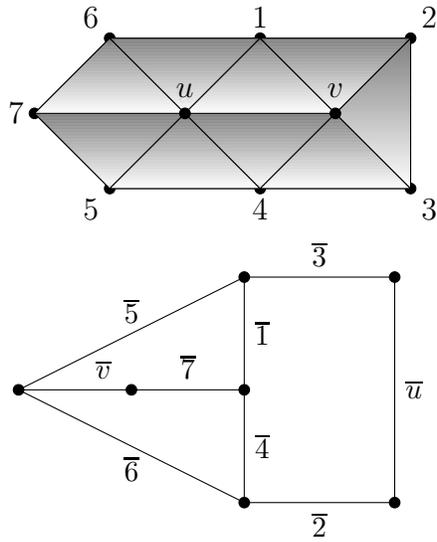
\begin{figure}[p]
\begin{center}
\begin{tikzpicture}
\draw (0,0)--(2,0) node[below] {$4$}--
      (3,1) node[right] {$v$}--
      (2,2) node[above] {$1$}--
      (0,2) node[above left] {$6$};
\draw (2,0) --
      (4,0) node[below right] {$3$}--
      (4,2) node[above right] {$2$}--
      (2,2)--
      (1,1) node[above] {$u$}--cycle;
\draw (0,0)--(1,1)--(0,2);
\draw (4,0)--(3,1)--(4,2);
\draw (1,1)--(3,1);
\draw (0,0)  node[below left] {$5$}  --(-1,1) node[left] {$7$}-- (0,2);
\draw (-1,1)--(1,1);
\filldraw[black] (0,0) circle (2pt);
\filldraw[black] (0,2) circle (2pt);
\filldraw[black] (1,1) circle (2pt);
\filldraw[black] (2,0) circle (2pt);
\filldraw[black] (2,2) circle (2pt);
\filldraw[black] (3,1) circle (2pt);
\filldraw[black] (4,0) circle (2pt);
\filldraw[black] (4,2) circle (2pt);
\filldraw[black] (-1,1) circle (2pt);
\shadedraw (0,0)--(1,1)--(-1,1)--(0,0);
\shadedraw (0,0)--(2,0)--(1,1)--(0,0);
\shadedraw (2,0)--(3,1)--(1,1)--(2,0);
\shadedraw (2,0)--(4,0)--(3,1)--(2,0);
\shadedraw (4,0)--(4,2)--(3,1)--(4,0);
\shadedraw (-1,1)--(1,1)--(0,2)--(-1,1);
\shadedraw (1,1)--(2,2)--(0,2)--(1,1);
\shadedraw (1,1)--(3,1)--(2,2)--(1,1);
\shadedraw (3,1)--(4,2)--(2,2)--(3,1);
\node[vertex] (u) at (1,1) [label={above:$u$}]{};
\node[vertex] (v) at (3,1) [label={above:$v$}]{};
\end{tikzpicture}

\begin{tikzpicture}
\draw (2,0)--(4,0) node[draw=none,fill=none,font=\small,midway,below] {$\overline{2}$};
\draw (2,3)--(4,3) node[draw=none,fill=none,font=\small,midway,above] {$\overline{3}$};
\draw (2,0)--(2,1.5) node[draw=none,fill=none,font=\small,midway,right] {$\overline{4}$};
\draw (2,1.5)--(2,3) node[draw=none,fill=none,font=\small,midway,right] {$\overline{1}$};
\draw (4,0)--(4,3) node[draw=none,fill=none,font=\small,midway,right] {$\overline{u}$};
\draw (-1,1.5)--(2,3) node[draw=none,fill=none,font=\small,midway,above] {$\overline{5}$};
\draw (-1,1.5)--(0.5,1.5) node[draw=none,fill=none,font=\small,near end,above] {$\overline{v}$};
\draw (0.5,1.5)--(2,1.5) node[draw=none,fill=none,font=\small,midway,above] {$\overline{7}$};
\draw (-1,1.5)--(2,0) node[draw=none,fill=none,font=\small,midway,below] {$\overline{6}$};
\filldraw[black] (2,0) circle (2pt);
\filldraw[black] (4,0) circle (2pt);
\filldraw[black] (2,3) circle (2pt);
\filldraw[black] (4,3) circle (2pt);
\filldraw[black] (2,1.5) circle (2pt);
\filldraw[black] (0.5,1.5) circle (2pt);
\filldraw[black] (-1,1.5) circle (2pt);
\end{tikzpicture}
\caption{Subcomplex of $M$ and Subgraph of $G$ for Case III\label{G-case3}}
\end{center}
\end{figure}

The link of $v$ is an induced $C_5$, with corresponding graph $C_5$;
the link of  $u$ is an induced $C_6$, with corresponding graph $K_{3,2}$.
These subgraphs in $G$ share edges $\overline{1}$ and $\overline{4}$.

Just as in Case II, 25 and 36 must be edges of $M$, and the vertices and edges of
$M$ then form a nonplanar graph.  So Case III cannot happen.

\textbf{Case IV}. By Eberhard's Theorem $M$, having no vertices of degree 3 or 5,
must have at least one vertex $v$ of degree 4.
By Case I, we need only consider the subcase where $v$ has at least
one neighbor $u$ of degree 6.  A subcomplex of $M$
and its corresponding subgraph are shown in Figure~\ref{MC-case4}.
(The vertices
are labeled $u$, $v$, and 1 through 7, except 3, to most closely match Case II.)

\begin{figure}[h]
\begin{center}
\begin{tikzpicture}
\draw (0,0)--(2,0) node[below] {$4$}--
      (2.5,1) node[above right] {$v$}--
      (2,2) node[above] {$1$}--
      (0,2) node[above left] {$6$};
\draw (2,0) --
      (3.7,1) node[below right] {$2$}--
      (2,2)--
      (1,1) node[above] {$u$}--cycle;
\draw (0,0)--(1,1)--(0,2);
\draw (1,1)--(2.5,1);
\draw (0,0)  node[below left] {$5$}  --(-1,1) node[left] {$7$}-- (0,2);
\draw (-1,1)--(1,1);
\draw (2.5,1)--(3.7,1);
\filldraw[black] (0,0) circle (2pt);
\filldraw[black] (0,2) circle (2pt);
\filldraw[black] (1,1) circle (2pt);
\filldraw[black] (2,0) circle (2pt);
\filldraw[black] (2,2) circle (2pt);
\filldraw[black] (2.5,1) circle (2pt);
\filldraw[black] (3.7,1) circle (2pt);
\filldraw[black] (-1,1) circle (2pt);
\shadedraw (0,0)--(1,1)--(-1,1)--(0,0);
\shadedraw (0,0)--(2,0)--(1,1)--(0,0);
\shadedraw (2,0)--(2.5,1)--(1,1)--(2,0);
\shadedraw (2,0)--(3.7,1)--(2.5,1)--(2,0);
\shadedraw (1,1)--(0,2)--(-1,1)--(1,1);
\shadedraw (1,1)--(2,2)--(0,2)--(1,1);
\shadedraw (1,1)--(2.5,1)--(2,2)--(1,1);
\shadedraw (2.5,1)--(3.7,1)--(2,2)--(2.5,1);
\node[vertex] (v) at (1,1) [label={above:$u$}]{};
\node[vertex] (v) at (2.5,1) [label={above right:$v$}]{};
\end{tikzpicture}

\vspace{9pt}

\begin{tikzpicture}
\draw (2,0)--(2,1.5) node[draw=none,fill=none,font=\small,midway,right] {$\overline{4}$};
\draw (2,1.5)--(2,3) node[draw=none,fill=none,font=\small,midway,right] {$\overline{1}$};
\draw (4,0)--(4,1.5) node[draw=none,fill=none,font=\small,midway,right] {$\overline{u}$};
\draw (4,1.5)--(4,3) node[draw=none,fill=none,font=\small,midway,right] {$\overline{2}$};
\draw (-1,1.5)--(2,3) node[draw=none,fill=none,font=\small,midway,above] {$\overline{5}$};
\draw (-1,1.5)--(0.5,1.5) node[draw=none,fill=none,font=\small,near end,above] {$\overline{v}$};
\draw (0.5,1.5)--(2,1.5) node[draw=none,fill=none,font=\small,midway,above] {$\overline{7}$};
\draw (-1,1.5)--(2,0) node[draw=none,fill=none,font=\small,midway,below] {$\overline{6}$};
\filldraw[black] (2,0) circle (2pt);
\filldraw[black] (4,0) circle (2pt);
\filldraw[black] (2,3) circle (2pt);
\filldraw[black] (4,3) circle (2pt);
\filldraw[black] (2,1.5) circle (2pt);
\filldraw[black] (0.5,1.5) circle (2pt);
\filldraw[black] (-1,1.5) circle (2pt);
\filldraw[black] (4,1.5) circle (2pt);
\end{tikzpicture}
\caption{Subcomplex of $M$ and Subgraph of $G$ for Case IV\label{MC-case4}}
\end{center}
\end{figure}

The link of $v$ is an induced $C_4$, with
corresponding graph $2P_3$; the link of $u$ is an induced $C_6$, with
corresponding graph $K_{3,2}$.  These subgraphs in $G$ share edges $\overline{1}$
and $\overline{4}$.

In $M$, vertex 2 is adjacent to vertices 1, 4 and $v$, so in $G$ edge
$\overline{2}$ is not incident to edges $\overline{1}$, $\overline{4}$ and $\overline{v}$.
But then it cannot be incident to any of the edges $\overline{5}$, $\overline{6}$ and
$\overline{7}$.  So $M$ must have edges 25, 26 and 27.  Then $M$ contains vertices
and edges forming the graph of a bipyramid over the hexagon $16754v$.   Since
$M$ is flag, it contains the whole boundary of the bipyramid as a subcomplex.
As noted before, this implies that all of $M$ is the boundary of the
bipyramid.

So in all cases, if $M$ is the matching complex of a simple graph (with no
isolated vertices) and $M$ is a 2-sphere, $M$ must be
the boundary of a bipyramid over a $k$-gon, $k\in\{4,5,6\}$.  We have
already seen that the graphs that
give these matching complexes are $3P_3$, $P_3\sqcup C_5$, and
$P_3\sqcup K_{3,2}$.
\end{proof}

By this theorem Proposition~\ref{SphereMatchingProp} gives the only way to realize the $2$-sphere as a matching complex, since the graphs in Theorem~\ref{2-sphere} are disjoint unions of elements of $\mathcal{SG}$.

Determining which spheres are matching complexes in higher dimension (aside from those of
Proposition~\ref{SphereMatchingProp}) is more complicated.
We approach this problem by considering the more general question of which matching complexes are homology manifolds.

\section{Manifolds without boundary}\label{Manifolds_w/o_boundary Section}

In this section, we answer the following questions:

\begin{quote}
For which graphs $G$ is the matching complex $M(G)$ a homology manifold?
Given a homology manifold, is it the matching complex of a graph?
\end{quote}

As before, all graphs are simple (without loops or multiple edges) and do not have any isolated vertices.

We will use the following standard observation: If $X$ and $Y$ are closed,
connected  homology $d$-manifolds without boundary, and $Y\subseteq X$, then $X=Y$. 
That is, no proper, connected, full-dimensional subcomplex of a closed manifold 
without boundary is a manifold without boundary.

Disconnected matching complexes were classified in
Corollary~\ref{disc-cor}.
None of these matching complexes (of dimension greater than 1) are manifolds without
boundary, so we can restrict ourselves to connected homology manifolds.

Since the only closed $d$-manifolds without boundary for $d<2$ are spheres (and disjoint unions of spheres), cases $d=0$ and $d=1$ are answered in Section \ref{Sphere Section}.
We summarize the results below. Throughout, $M(G)$ is a manifold without boundary and $G$ is a simple graph.

\begin{itemize}
\item Let $\dim M(G) = 0$. Then $G=P_2$, $G=C_3$ or
      $G=K_{1,n}$ for any $n\ge 2$.
\item Let $\dim M(G) = 1$. Then $G \in \{ 2P_3, C_5, K_{3,2} \}.$
\end{itemize}

However, when $\dim M=2$, the situation becomes more complex.

\begin{proposition}\emph{\cite[Page 30]{blvz}} \label{TorusMatchingProp}
If $G = K_{4,3}$, then $M(G)$ is a triangulation of  $T^2$, 
the (2-dimensional) torus.
\end{proposition}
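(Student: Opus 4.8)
The plan is to show that $M(K_{4,3})$ is a connected closed triangulated surface with Euler characteristic $0$, and then to pin down its homeomorphism type by proving it is orientable. Throughout I model $K_{4,3}$ as a $4\times 3$ chessboard: its edges are the $12$ cells $(i,j)$ with $i\in\{1,2,3,4\}$ a row and $j\in\{1,2,3\}$ a column, and a face of $M(K_{4,3})$ is a set of pairwise non-attacking rooks, i.e.\ cells with distinct rows and distinct columns. The maximal matchings use all three columns and three of the four rows, so $\dim M(K_{4,3})=2$. Counting non-attacking configurations gives the $f$-vector $(f_0,f_1,f_2)=(12,36,24)$: there are $12$ cells, $\binom{4}{2}\binom{3}{2}\cdot 2=36$ non-attacking pairs, and $\binom{4}{3}\binom{3}{3}\cdot 3!=24$ non-attacking triples. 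Hence $\chi=12-36+24=0$.

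First I would verify that $M(K_{4,3})$ is a closed surface using the Link Lemma (Lemma~\ref{LinkLemma}). Deleting from $K_{4,3}$ all edges incident to a fixed edge $(i,j)$ removes row $i$ and column $j$, leaving $K_{3,2}$; thus $\link_{M(K_{4,3})}(i,j)=M(K_{3,2})$, which is the $6$-cycle $C_6$ by Theorem~\ref{1-sphere}. So every vertex link is a circle, every edge lies in exactly two triangles, and $M(K_{4,3})$ is a connected closed triangulated $2$-manifold. By the classification of closed surfaces, a connected closed surface with $\chi=0$ is either the torus or the Klein bottle, so it remains only to decide orientability.

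The main obstacle is exactly this last step: the local structure of the two candidates is identical, so orientability must be settled by a genuinely global argument. My plan is to exhibit an explicit coherent orientation (equivalently, a nonzero fundamental $2$-cycle) using the cyclic symmetry of the complex. Since $\gcd(4,3)=1$, the Chinese Remainder Theorem identifies the vertex set $\mathbb{Z}_4\times\mathbb{Z}_3$ with $\mathbb{Z}_{12}$, and under this labeling two cells are non-attacking precisely when their difference lies in $S=\{\pm1,\pm2,\pm5\}$. The rotation $x\mapsto x+1$ is then a simplicial automorphism acting simply transitively on the $12$ vertices. A short check shows the $24$ triangles split into two rotation orbits, the translates $A_k=\{k,k+1,k+2\}$ and $B_k=\{k,k+2,k+7\}$ for $k\in\mathbb{Z}_{12}$. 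Orienting each as the corresponding translate of $[0,1,2]$ and $[0,2,7]$, I claim that $\zeta=\sum_{k\in\mathbb{Z}_{12}}(-1)^k\bigl([A_k]+[B_k]\bigr)$ is a cycle: the $36$ edges fall into the three rotation families $[m,m+1]$, $[m,m+2]$, $[m,m+5]$, and a direct computation of $\partial\zeta$ shows that the two triangles meeting along each edge contribute with opposite signs, so every coefficient of $\partial\zeta$ vanishes.

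Because $\zeta$ assigns a nonzero coefficient to all $24$ triangles and satisfies $\partial\zeta=0$, it is an integral fundamental cycle; hence $M(K_{4,3})$ is orientable. An orientable connected closed surface with $\chi=0$ is the torus, completing the proof. I expect the verification that $\zeta$ is a cycle to be the only delicate point; the cleanest way to organize it is to check the three edge families separately, where the factor $(-1)^k$ is forced by the fact that consecutive translates sharing an edge must receive opposite signs. (The same computation, read over $\mathbb{Q}$, shows $H_2(M(K_{4,3});\mathbb{Q})\cong\mathbb{Q}$ and $b_1=2$, independently confirming the torus.)
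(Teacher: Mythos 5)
Your proof is correct, but it takes a genuinely different route from the paper's. The paper's own justification (crediting Bj\"{o}rner, et al.) is essentially pictorial: Figure~\ref{TorusMatchingFig} draws $M(K_{4,3})$ as a triangulated square whose top/bottom and left/right sides are identified with matching orientations, i.e., an explicit fundamental domain for $T^2$. You instead argue by invariants: the rook-placement counts give $f=(12,36,24)$ and $\chi=0$; Lemma~\ref{LinkLemma} together with Theorem~\ref{1-sphere} shows every vertex link is $C_6$, so $M(K_{4,3})$ is a closed connected surface; the classification of surfaces then reduces everything to orientability, which you certify with an explicit integral fundamental $2$-cycle. Your CRT observation is a clean structural fact not in the paper: $M(K_{4,3})$ is the clique complex of the circulant graph $C_{12}(1,2,5)$, with facets forming the two $\mathbb{Z}_{12}$-orbits $A_k=\{k,k+1,k+2\}$ and $B_k=\{k,k+2,k+7\}$. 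I checked the cancellation in $\partial\zeta$: the $[k,k+2]$ terms cancel within each $k$ (note that here the two triangles $A_k$ and $B_k$ carry the \emph{same} sign $(-1)^k$ but induce opposite orientations on the shared edge, so your phrase ``opposite signs'' should be read as ``opposite contributions''); the difference-$1$ terms cancel between $A_{m-1}$ and $A_m$; and the difference-$5$ terms cancel between $B_{m-2}$ and $B_{m+5}$, whose indices have opposite parity. Hence $\partial\zeta=0$, the surface is orientable, and with $\chi=0$ it is the torus. As for what each approach buys: the paper's figure exhibits the torus structure immediately and concretely, but its rigor rests on the reader verifying that the diagram faithfully encodes all $24$ facets and their identifications; your argument is picture-free, mechanically checkable, reuses the paper's own lemmas, and yields the homology computation as a bonus, at the cost of invoking the classification of closed surfaces, a heavier external tool than anything the figure requires.
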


Figure~\ref{TorusMatchingFig} shows the labeled graph $K_{4,3}$
and its matching complex. We can see that the top and bottom edges of the 
matching complex diagram are identified with the same orientation; the same is true for the left and right sides. Therefore the matching complex of $K_{4,3}$ is a triangulation of $T^2$, the torus.

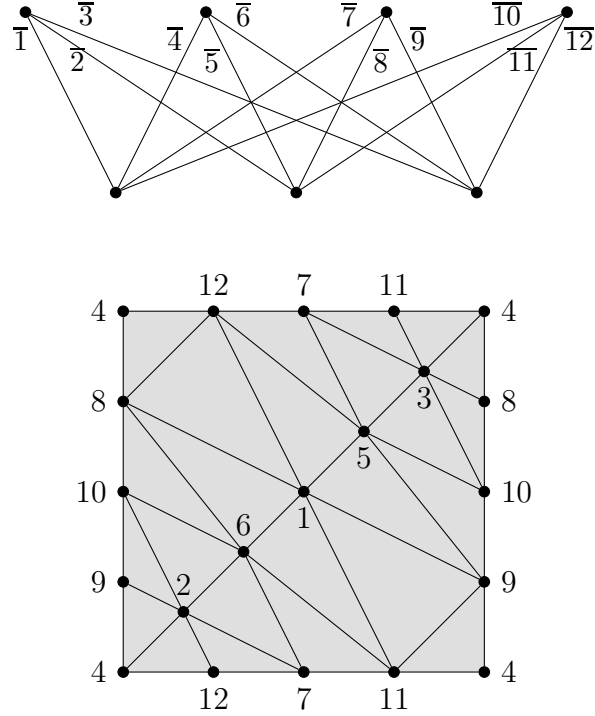
\begin{figure}[h]
\begin{center}
\begin{tikzpicture}[scale=1.2]
\node[vertex] (a) at (1,0) {};
\node[vertex] (b) at (3,0) {};
\node[vertex] (c) at (5,0) {};
\node[vertex] (d) at (0,2) {};
\node[vertex] (e) at (2,2) {};
\node[vertex] (f) at (4,2) {};
\node[vertex] (g) at (6,2) {};
\draw (d)--(a) node[draw=none,fill=none,font=\small,very near start,left] {$\overline{1}$};
\draw (d)--(b) node[draw=none,fill=none,font=\small,near start,left] {$\overline{2}$};
\draw (d)--(c) node[draw=none,fill=none,font=\small,very near start,above] {$\overline{3}$};
\draw (e)--(a) node[draw=none,fill=none,font=\small,very near start,left] {$\overline{4}$};
\draw (e)--(b) node[draw=none,fill=none,font=\small,near start,left] {$\overline{5}$};
\draw (e)--(c) node[draw=none,fill=none,font=\small,very near start,above] {$\overline{6}$};
\draw (f)--(a) node[draw=none,fill=none,font=\small,very near start,above] {$\overline{7}$};
\draw (f)--(b) node[draw=none,fill=none,font=\small,near start,right] {$\overline{8}$};
\draw (f)--(c) node[draw=none,fill=none,font=\small,very near start,right] {$\overline{9}$};
\draw (g)--(a) node[draw=none,fill=none,font=\small,very near start,above] {$\overline{10}$};
\draw (g)--(b) node[draw=none,fill=none,font=\small,near start,right] {$\overline{11}$};
\draw (g)--(c) node[draw=none,fill=none,font=\small,very near start,right] {$\overline{12}$};
\end{tikzpicture}

\vspace{24pt}

\begin{tikzpicture}[scale=1.2]

\path[fill=gray!25] (0,0) -- (4,0) -- (4,4) -- (0,4) -- cycle; 

\node[vertex] (4a) at (0,4) [label={left:$4$}]{};
\node[vertex] (8a) at (0,3) [label={left:$8$}]{};
\node[vertex] (10a) at (0,2) [label={left:$10$}]{};
\node[vertex] (9a) at (0,1) [label={left:$9$}]{};
\node[vertex] (4b) at (0,0) [label={left:$4$}]{};
\node[vertex] (4c) at (4,4) [label={right:$4$}]{};
\node[vertex] (8b) at (4,3) [label={right:$8$}]{};
\node[vertex] (10b) at (4,2) [label={right:$10$}]{};
\node[vertex] (9b) at (4,1) [label={right:$9$}]{};
\node[vertex] (4d) at (4,0) [label={right:$4$}]{};
\node[vertex] (12a) at (1,4) [label={above:$12$}]{};
\node[vertex] (7a) at (2,4) [label={above:$7$}]{};
\node[vertex] (11a) at (3,4) [label={above:$11$}]{};
\node[vertex] (12b) at (1,0) [label={below:$12$}]{};
\node[vertex] (7b) at (2,0) [label={below:$7$}]{};
\node[vertex] (11b) at (3,0) [label={below:$11$}]{};
\node[vertex] (2) at (.667,.667) [label={above:$2$}]{};
\node[vertex] (6) at (1.333,1.333) [label={above:$6$}]{};
\node[vertex] (1) at (2,2) [label={below:$1$}]{};
\node[vertex] (5) at (2.667,2.667) [label={below:$5$}]{};
\node[vertex] (3) at (3.333,3.333) [label={below:$3$}]{};
\draw (4a)--(4b) node[draw=none,fill=none,font=\small,very near start,above] {};
\draw (4a)--(4c) node[draw=none,fill=none,font=\small,very near start,above] {};
\draw (4b)--(4d) node[draw=none,fill=none,font=\small,very near start,above] {};
\draw (4c)--(4d) node[draw=none,fill=none,font=\small,very near start,above] {};
\draw (4b)--(4c) node[draw=none,fill=none,font=\small,very near start,above] {};
\draw (8a)--(12a) node[draw=none,fill=none,font=\small,very near start,above] {};
\draw (11b)--(9b) node[draw=none,fill=none,font=\small,very near start,above] {};
\draw (2)--(9a) node[draw=none,fill=none,font=\small,very near start,above] {};
\draw (2)--(10a) node[draw=none,fill=none,font=\small,very near start,above] {};
\draw (2)--(7b) node[draw=none,fill=none,font=\small,very near start,above] {};
\draw (2)--(12b) node[draw=none,fill=none,font=\small,very near start,above] {};
\draw (6)--(10a) node[draw=none,fill=none,font=\small,very near start,above] {};
\draw (6)--(8a) node[draw=none,fill=none,font=\small,very near start,above] {};
\draw (6)--(11b) node[draw=none,fill=none,font=\small,very near start,above] {};
\draw (6)--(7b) node[draw=none,fill=none,font=\small,very near start,above] {};
\draw (1)--(8a) node[draw=none,fill=none,font=\small,very near start,above] {};
\draw (1)--(12a) node[draw=none,fill=none,font=\small,very near start,above] {};
\draw (1)--(9b) node[draw=none,fill=none,font=\small,very near start,above] {};
\draw (1)--(11b) node[draw=none,fill=none,font=\small,very near start,above] {};
\draw (5)--(12a) node[draw=none,fill=none,font=\small,very near start,above] {};
\draw (5)--(7a) node[draw=none,fill=none,font=\small,very near start,above] {};
\draw (5)--(10b) node[draw=none,fill=none,font=\small,very near start,above] {};
\draw (5)--(9b) node[draw=none,fill=none,font=\small,very near start,above] {};
\draw (3)--(7a) node[draw=none,fill=none,font=\small,very near start,above] {};
\draw (3)--(11a) node[draw=none,fill=none,font=\small,very near start,above] {};
\draw (3)--(8b) node[draw=none,fill=none,font=\small,very near start,above] {};
\draw (3)--(10b) node[draw=none,fill=none,font=\small,very near start,above] {};
\end{tikzpicture}
\caption{The graph $K_{4,3}$ and its matching complex.
Edges with identical vertex labels are identified.
\label{TorusMatchingFig}}
\end{center}
\end{figure}

When $M(G)$ is two dimensional, the next theorem shows that there are two options: $M(G)$ triangulates either the torus $T^2$ or the sphere ${S}^2$.

\begin{theorem}\label{2dimMan} 
Let $G$ be a simple graph such that $M(G)$ is a 2-dimensional manifold.
Then either 
\begin{enumerate}
\item $G = K_{4,3}$ and $M(G)$ is a triangulation of $T^2$, or
\item $G \in \{ 3P_3, P_3 \sqcup C_5, P_3 \sqcup K_{3,2}\}$ and $M(G)$ is a
triangulation of ${S}^2$.
\end{enumerate}

\end{theorem}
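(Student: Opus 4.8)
The plan is to split on whether $G$ is connected, using the Link Lemma to pin down the local structure and the Join Lemma to dispatch the disconnected case to the already-proved sphere classification. Since $M(G)$ is a $2$-manifold, for each vertex $v$ the link $\link_{M(G)} v$ is a homology $1$-sphere; by the Link Lemma it equals $M(G_{\overline v})$, so Theorem~\ref{1-sphere} forces $\link_{M(G)} v \in \{C_4, C_5, C_6\}$, with $G_{\overline v} \in \{2P_3, C_5, K_{3,2}\}$ respectively. Thus every vertex of $M(G)$ has degree $4$, $5$, or $6$. If $G$ is disconnected, write $G = G_1 \sqcup G_2$; the Join Lemma gives $M(G) = M(G_1) \ast M(G_2)$ as a nontrivial join, so Proposition~\ref{JoinManifold}(1) shows $M(G)$ is a homology $2$-sphere, and Theorem~\ref{2-sphere} yields $G \in \{3P_3,\, P_3 \sqcup C_5,\, P_3 \sqcup K_{3,2}\}$, which is outcome (2). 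Hence I may assume $G$ is connected, and then Theorem~\ref{2-sphere} guarantees that $M(G)$ is not a $2$-sphere.

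Next I would run the standard Euler-characteristic count. Because a $2$-dimensional homology manifold is a triangulated closed surface (the three notions coincide in dimension two), each edge lies in exactly two triangles, so $3F = 2E$, while $2E = 4n_4 + 5n_5 + 6n_6$, where $n_k$ is the number of degree-$k$ vertices. These give $\chi(M(G)) = V - E/3 = (2n_4 + n_5)/6 \ge 0$. Since $M(G)$ is a connected closed surface that is not a sphere, $\chi \in \{0,1\}$; thus $M(G)$ is $T^2$ or the Klein bottle ($\chi = 0$, forcing $n_4 = n_5 = 0$), or $\mathbb{RP}^2$ ($\chi = 1$, i.e.\ $2n_4 + n_5 = 6$).

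The heart of the proof is to eliminate every vertex of degree $4$ or $5$, which rules out $\chi = 1$ and leaves all vertices of degree $6$. Here I would reuse and extend the neighborhood analysis of Theorem~\ref{2-sphere}: a degree-$4$ vertex together with an appropriate neighbor (as in Cases I and IV there) forces a bipyramid, that is, a $2$-sphere subcomplex; since a closed $2$-manifold has no proper full-dimensional closed submanifold, this would force $M(G) = S^2$, contradicting connectedness. The delicate part is the degree-$5$ vertices: the corresponding configurations (Cases II and III) were excluded in Theorem~\ref{2-sphere} only via planarity of the sphere's $1$-skeleton, and that argument is unavailable on a nonplanar surface such as $\mathbb{RP}^2$. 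I expect this to be the main obstacle, and would resolve it by a direct matching-theoretic analysis of the two overlapping links around an edge joining two low-degree vertices, showing that the forced incidences among the corresponding edges of $G$ either produce an induced $P_6$ (impossible by Proposition~\ref{NoPathLemma}) or again close up into a $2$-sphere.

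Once all vertices have degree $6$, we have $\chi = 0$ and $G_{\overline v} = K_{3,2}$ for every edge $\overline v$ of $G$. Counting the edges of $G$ disjoint from a fixed edge $\{a,b\}$ gives $\deg_G a + \deg_G b = |E(G)| - 5$ for every edge, so the degree-sum is constant across all edges; for a connected graph this forces $G$ to be regular or biregular bipartite, and a short case check (using that the disjoint subgraph is exactly $K_{3,2}$, not merely of the right size) eliminates every possibility except $G = K_{4,3}$. This simultaneously excludes the Klein bottle, since no graph yields it. Finally, Proposition~\ref{TorusMatchingProp} identifies $M(K_{4,3})$ as a triangulation of $T^2$, giving outcome (1).
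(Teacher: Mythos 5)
Your handling of the disconnected case is correct and identical to the paper's (Lemma~\ref{DisjointJoinLemma}, Proposition~\ref{JoinManifold}, Theorem~\ref{2-sphere}), and your Euler-characteristic framework for connected $G$ is sound: with every vertex link in $\{C_4,C_5,C_6\}$ one gets $\chi(M(G))=(2n_4+n_5)/6\ge 0$, Theorem~\ref{2-sphere} excludes the sphere, and only $\mathbb{RP}^2$, $T^2$, and the Klein bottle survive. The all-degree-$6$ endgame (constant $\deg_G a+\deg_G b=|E(G)|-5$ over edges, hence regular or biregular bipartite, hence $G=K_{4,3}$) can indeed be pushed through, though you leave the case check unwritten. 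The genuine gap is the step you yourself call the main obstacle: eliminating vertices of degree $4$ and $5$, which is exactly the task of ruling out $\mathbb{RP}^2$, since any such vertex forces $\chi=1$. For degree $4$ you invoke Cases I and IV of Theorem~\ref{2-sphere}, but those cover only a vertex all of whose neighbors have degree $4$, or a degree-$4$ vertex with a degree-$6$ neighbor in a complex with \emph{no} degree-$5$ vertices; a degree-$4$ vertex adjacent to a degree-$5$ vertex is covered by neither, so even this reduction presupposes the degree-$5$ elimination. And for degree $5$ you give no argument at all: you correctly note that Cases II and III of Theorem~\ref{2-sphere} conclude via planarity of the $1$-skeleton (a $K_5$ minor), which proves nothing on $\mathbb{RP}^2$, and then you say you \emph{expect} to resolve this by an analysis producing an induced $P_6$ or a $2$-sphere subcomplex. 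That is a statement of intent, not a proof; nothing in the proposal shows that a flag triangulation of $\mathbb{RP}^2$ with $2n_4+n_5=6$ cannot be a matching complex.

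It is worth seeing how the paper fills precisely this hole, because its method never touches the topology of the ambient surface. For connected $G$ the paper splits on the link of a single vertex: if some link is $C_5$, then by Theorem~\ref{1-sphere} $G_{\overline{v}}$ is exactly a $5$-cycle disjoint from $\overline{v}$, every other edge of $G$ meets $\overline{v}$, and connectivity supplies one such edge $\overline{a}$; applying Theorem~\ref{1-sphere} to the links of edges of that $5$-cycle forces $G_{\overline{3}}$, and in turn $G_{\overline{2}}$ and $G_{\overline{4}}$, to be $K_{3,2}$, after which $G_{\overline{1}}$ contains a triangle --- impossible, since $G_{\overline{1}}$ must lie in $\{2P_3,C_5,K_{3,2}\}$, all triangle-free. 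If all links are $C_4$, connectivity similarly forces $G_{\overline{1}}$ to contain a path of length three, contradicting $G_{\overline{1}}=2P_3$. If some link is $C_6$, the same forcing builds $K_{4,3}\subseteq G$, and since $M(K_{4,3})=T^2$ is closed and a connected closed $2$-manifold has no proper full-dimensional closed submanifold, $G=K_{4,3}$. These graph-side arguments use only the link classification, flagness, and connectivity --- no planarity, no Euler characteristic, no Eberhard-type existence statements --- so they are valid on an arbitrary closed surface and are exactly what your outline is missing. Until the degree-$4$/$5$ elimination is actually carried out along these or other lines, your proposal establishes the theorem only modulo its hardest step.
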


\begin{proof}
If $G$ is disconnected, then $G = G_1 \sqcup G_2$ and thus $M(G) = M(G_1) \ast M(G_2)$ by Lemma~\ref{DisjointJoinLemma}. Therefore by Proposition \ref{JoinManifold}, $M(G)$ is a sphere.
This case is covered by Theorem~\ref{2-sphere}.

Otherwise assume that $G$ is connected. We will show that the only possibility in this case is that $G = K_{4,3}$. Since $M = M(G)$ is a $2$-manifold, if $v$ is a vertex of $M$, then $\link_M v$ is a $1$-sphere. Furthermore, for any face $\sigma \in M$, $\link_M \sigma = M(G_{\overline{\sigma}})$ by Lemma \ref{LinkLemma}. Therefore, $\link_Mv$ is either $C_4$, $C_5$, or $C_6$ by Theorem \ref{1-sphere}. We will use this to consider three different cases.

\textbf{Case I}. All vertices $v \in M$ have $\link_M v = C_4.$

\textbf{Case II}. There exists a vertex $v \in M$ such that $\link_M v = C_5.$

\textbf{Case III}. There exists a vertex $v \in M$ such that $\link_M v = C_6.$

\textbf{Case I}. In this case, $G_{\overline{v}} = 2P_3$ for all vertices $v \in M$. Let $v$ be some vertex of $M$. Then $G$ must contain a subgraph as in Figure~\ref{2Dim-CaseI}.

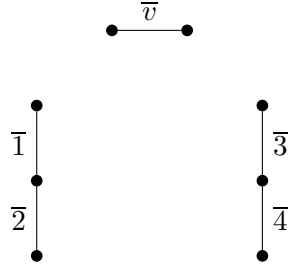
\begin{figure}[h]
\begin{center}
\begin{tikzpicture}
\node[vertex] (a) at (0,0) {};
\node[vertex] (b) at (0,1) {};
\node[vertex] (c) at (0,2) {};
\node[vertex] (d) at (1,3) {};
\node[vertex] (e) at (2,3) {};
\node[vertex] (f) at (3,2) {};
\node[vertex] (g) at (3,1) {};
\node[vertex] (h) at (3,0) {};
\draw (a)--(b) node[draw=none,fill=none,font=\small,midway,left] {$\overline{2}$};
\draw (b)--(c) node[draw=none,fill=none,font=\small,midway,left] {$\overline{1}$};
\draw (d)--(e) node[draw=none,fill=none,font=\small,midway,above] {$\overline{v}$};
\draw (h)--(g) node[draw=none,fill=none,font=\small,midway,right] {$\overline{4}$};
\draw (g)--(f) node[draw=none,fill=none,font=\small,midway,right] {$\overline{3}$};
\end{tikzpicture}
\caption{A subgraph for Case I of Theorem~4.2. All remaining edges of $G$ must share an endpoint with $\overline{v}$. \label{2Dim-CaseI}}
\end{center}
\end{figure}

$G$ is connected, and all remaining edges of $G$ must have a vertex in common with $\overline{v}$. Therefore there must be an edge $\overline{a}$ connecting $\overline{v}$ and the component containing the edges $\overline{3}$ and $\overline{4}$. Thus $G_{\overline{1}}$ contains the edges $\overline{a}$, $\overline{v}$, $\overline{3}$, and $\overline{4}$ and therefore contains a path of length three. But $G_{\overline{1}}$ is $2P_3$ by assumption, so this is a contradiction. Therefore Case I is not possible.

\textbf{Case II}. In this case, $G$ must contain an edge $\overline{v}$ and $C_5$ that is disjoint from $\overline{v}$. Since $G$ is connected and every other edge of $G$ shares a vertex with $\overline{v}$, we assume without loss of generality that there is an edge $\overline{a}$ as in Figure~\ref{2Dim-CaseII1}.

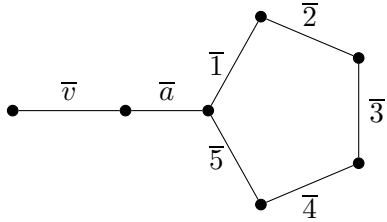
\begin{figure}[h]
\begin{center}
\begin{tikzpicture}
\node[vertex] (m) at (-.35,1.5) {};
\node[vertex] (n) at (1.15,1.5) {};
\node[vertex] (o) at (2.25,1.5) {};
\node[vertex] (p) at (4.25,2.2) {};
\node[vertex] (q) at (2.95,2.75) {};
\node[vertex] (r) at (2.95,.25) {};
\node[vertex] (s) at (4.25,.8) {};
\draw (m)--(n) node[draw=none,fill=none,font=\small,midway,above ] {$\overline{v}$};
\draw (o)--(q) node[draw=none,fill=none,font=\small,midway, left] {$\overline{1}$};
\draw (q)--(p) node[draw=none,fill=none,font=\small,midway,above ] {$\overline{2}$};
\draw (p)--(s) node[draw=none,fill=none,font=\small,midway,right] {$\overline{3}$};
\draw (s)--(r) node[draw=none,fill=none,font=\small,midway,below] {$\overline{4}$};
\draw (r)--(o) node[draw=none,fill=none,font=\small,midway,left] {$\overline{5}$};
\draw (n)--(o) node[draw=none,fill=none,font=\small,midway,above] {$\overline{a}$};
\end{tikzpicture}
\caption{A subgraph for Case II of Theorem~4.2. All remaining edges of $G$ must share an endpoint with $\overline{v}$. \label{2Dim-CaseII1}}
\end{center}
\end{figure}

Now consider $G_{\overline{3}}$. Since it already contains $\overline{v},$ $\overline{a},$ $\overline{1},$ and $\overline{5}$, the only possibility is that $G_{\overline{3}} = K_{3,2}$ by Theorem~\ref{1-sphere}. Thus there are also the edges $\overline{b}$ and $\overline{c}$ in $G$ as shown in Figure~\ref{2Dim-CaseII2}.

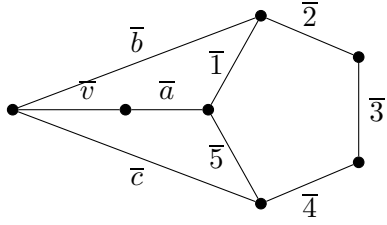
\begin{figure}[h]
\begin{center}
\begin{tikzpicture}
\node[vertex] (m) at (-.35,1.5) {};
\node[vertex] (n) at (1.15,1.5) {};
\node[vertex] (o) at (2.25,1.5) {};
\node[vertex] (p) at (4.25,2.2) {};
\node[vertex] (q) at (2.95,2.75) {};
\node[vertex] (r) at (2.95,.25) {};
\node[vertex] (s) at (4.25,.8) {};
\draw (m)--(n) node[draw=none,fill=none,font=\small,midway,above right] {$\overline{v}$};
\draw (o)--(q) node[draw=none,fill=none,font=\small,midway, left] {$\overline{1}$};
\draw (q)--(p) node[draw=none,fill=none,font=\small,midway,above ] {$\overline{2}$};
\draw (p)--(s) node[draw=none,fill=none,font=\small,midway,right] {$\overline{3}$};
\draw (s)--(r) node[draw=none,fill=none,font=\small,midway,below] {$\overline{4}$};
\draw (r)--(o) node[draw=none,fill=none,font=\small,midway,left] {$\overline{5}$};
\draw (n)--(o) node[draw=none,fill=none,font=\small,midway,above] {$\overline{a}$};
\draw (m)--(q) node[draw=none,fill=none,font=\small,midway,above] {$\overline{b}$};
\draw (m)--(r) node[draw=none,fill=none,font=\small,midway,below] {$\overline{c}$};
\end{tikzpicture}
\caption{A subgraph for Case II of Theorem~4.2, with edges added from $G_{\overline{3}}$. \label{2Dim-CaseII2}}
\end{center}
\end{figure}

Similarly, we now consider $G_{\overline{2}}$ and $G_{\overline{4}}$ separately. By the same reasoning as for $G_{\overline{3}}$ above, both of these subgraphs must be $K_{3,2}$, which gives us the new edges $\overline{d}$ and $\overline{e}$ in Figure~\ref{2Dim-CaseII3}.

\begin{figure}[h]
\begin{center}
\begin{tikzpicture}
\node[vertex] (m) at (-.35,1.5) {};
\node[vertex] (n) at (1.15,1.5) {};
\node[vertex] (o) at (2.25,1.5) {};
\node[vertex] (p) at (4.25,2.2) {};
\node[vertex] (q) at (2.95,2.75) {};
\node[vertex] (r) at (2.95,.25) {};
\node[vertex] (s) at (4.25,.8) {};
\draw (m)--(n) node[draw=none,fill=none,font=\small,midway,above right] {$\overline{v}$};
\draw (o)--(q) node[draw=none,fill=none,font=\small,midway, left] {$\overline{1}$};
\draw (q)--(p) node[draw=none,fill=none,font=\small,midway,above ] {$\overline{2}$};
\draw (p)--(s) node[draw=none,fill=none,font=\small,midway,right] {$\overline{3}$};
\draw (s)--(r) node[draw=none,fill=none,font=\small,midway,below] {$\overline{4}$};
\draw (r)--(o) node[draw=none,fill=none,font=\small,midway,left] {$\overline{5}$};
\draw (n)--(o) node[draw=none,fill=none,font=\small,midway,above] {$\overline{a}$};
\draw (m)--(q) node[draw=none,fill=none,font=\small,midway,above] {$\overline{b}$};
\draw (m)--(r) node[draw=none,fill=none,font=\small,midway,below] {$\overline{c}$};
\draw (n)--(p) node[draw=none,fill=none,font=\small,midway,above right] {$\overline{d}$};
\draw (n)--(s) node[draw=none,fill=none,font=\small,midway,above right] {$\overline{e}$};
\end{tikzpicture}
\caption{A subgraph for Case II of Theorem~4.2, with edges from $G_{\overline{2}}$ and $G_{\overline{4}}$. \label{2Dim-CaseII3}}
\end{center}
\end{figure}
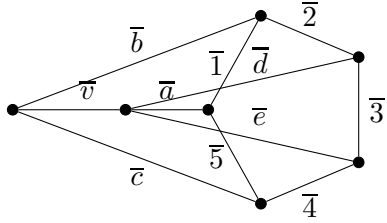
Now we can see that the subgraph $G_{\overline{1}}$ must contain the triangle
$\overline{d}\overline{e}\overline{3}$, but this is a contradiction by Theorem~\ref{1-sphere}.
Therefore Case II is also impossible.

\textbf{Case III}. In this case, $G$ must contain an edge $\overline{v}$ and $K_{3,2}$ that is disjoint from $\overline{v}$. Recall that $G$ is connected and every other edge of $G$ shares a vertex with $\overline{v}$. There must exist a subgraph of $G$ as in Figure~\ref{2Dim-CaseIII}, and all other edges of $G$ must share an endpoint with $\overline{v}$. We will split this case up into two subcases.

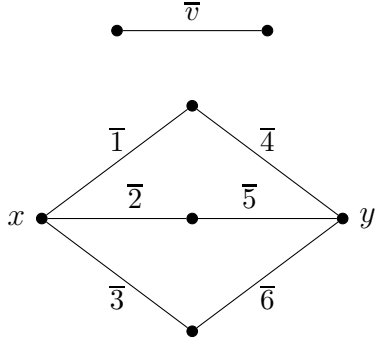
\begin{figure}[h]
\begin{center}
\begin{tikzpicture}
\node[vertex] (x) at (0,1.5) [label={left:$x$}]{};
\node[vertex] (y) at (4,1.5) [label={right:$y$}]{};
\node[vertex] (z) at (3,4) {};
\node[vertex] (m) at (1,4) {};
\node[vertex] (n) at (2,3) {};
\node[vertex] (o) at (2,1.5) {};
\node[vertex] (p) at (2,0) {};
\draw (m)--(z) node[draw=none,fill=none,font=\small,midway,above ] {$\overline{v}$};
\draw (x)--(n) node[draw=none,fill=none,font=\small,midway,above ] {$\overline{1}$};
\draw (x)--(o) node[draw=none,fill=none,font=\small,midway,above right ] {$\overline{2}$};
\draw (x)--(p) node[draw=none,fill=none,font=\small,midway,below ] {$\overline{3}$};
\draw (y)--(n) node[draw=none,fill=none,font=\small,midway,above ] {$\overline{4}$};
\draw (y)--(o) node[draw=none,fill=none,font=\small,midway,above left ] {$\overline{5}$};
\draw (y)--(p) node[draw=none,fill=none,font=\small,midway,below ] {$\overline{6}$};
\end{tikzpicture}
\caption{A subgraph for Case III of Theorem~4.2. All remaining edges of $G$ must share an endpoint with $\overline{v}$. \label{2Dim-CaseIII}}
\end{center}
\end{figure}

\textbf{Case III.1}. There are no edges between the endpoints of $\overline{v}$ and vertices $x$ and $y$.

Since $G$ is connected, there must be an edge $\overline{a}$ connecting one of the endpoints of $\overline{v}$ with one of the middle vertices in the copy of $K_{3,2}$ in Figure~\ref{2Dim-CaseIII}. Without loss of generality, we assume it is as in Figure~\ref{2Dim-CaseIII1}.

\begin{figure}[h]
\begin{center}
\begin{tikzpicture}
\node[vertex] (x) at (0,1.5) [label={left:$x$}]{};
\node[vertex] (y) at (4,1.5) [label={right:$y$}]{};
\node[vertex] (z) at (3,4) {};
\node[vertex] (m) at (1,4) {};
\node[vertex] (n) at (2,3) {};
\node[vertex] (o) at (2,1.5) {};
\node[vertex] (p) at (2,0) {};
\draw (m)--(z) node[draw=none,fill=none,font=\small,midway,above ] {$\overline{v}$};
\draw (x)--(n) node[draw=none,fill=none,font=\small,midway,above ] {$\overline{1}$};
\draw (x)--(o) node[draw=none,fill=none,font=\small,midway,above right ] {$\overline{2}$};
\draw (x)--(p) node[draw=none,fill=none,font=\small,midway,below ] {$\overline{3}$};
\draw (y)--(n) node[draw=none,fill=none,font=\small,midway,above ] {$\overline{4}$};
\draw (y)--(o) node[draw=none,fill=none,font=\small,midway,above left ] {$\overline{5}$};
\draw (y)--(p) node[draw=none,fill=none,font=\small,midway,below ] {$\overline{6}$};
\draw (m)--(n) node[draw=none,fill=none,font=\small,midway,right ] {$\overline{a}$};
\end{tikzpicture}
\caption{A subgraph for Case III.1 of Theorem~4.2. All remaining edges of $G$ must share an endpoint with $\overline{v}$. \label{2Dim-CaseIII1}}
\end{center}
\end{figure}

Therefore $G_{\overline{a}}$ contains a four cycle. By Theorem~\ref{1-sphere}, this implies that $G_{\overline{a}} = K_{3,2}$. Since all remaining edges in $G$ must share an endpoint with $\overline{v}$, the only possibility in this case is to add the edges $\overline{b}$ and $\overline{c}$ as in Figure~\ref{2Dim-CaseIII1a}.

\begin{figure}[h]
\begin{center}
\begin{tikzpicture}
\node[vertex] (x) at (0,1.5) [label={left:$x$}]{};
\node[vertex] (y) at (4,1.5) [label={right:$y$}]{};
\node[vertex] (z) at (3.5,4) {};
\node[vertex] (m) at (1,4) {};
\node[vertex] (n) at (2,3) {};
\node[vertex] (o) at (2,1.5) {};
\node[vertex] (p) at (2,0) {};
\draw (m)--(z) node[draw=none,fill=none,font=\small,midway,above ] {$\overline{v}$};
\draw (x)--(n) node[draw=none,fill=none,font=\small,midway,above ] {$\overline{1}$};
\draw (x)--(o) node[draw=none,fill=none,font=\small,midway,above right ] {$\overline{2}$};
\draw (x)--(p) node[draw=none,fill=none,font=\small,midway,below ] {$\overline{3}$};
\draw (y)--(n) node[draw=none,fill=none,font=\small,midway,above right ] {$\overline{4}$};
\draw (y)--(o) node[draw=none,fill=none,font=\small,midway,below ] {$\overline{5}$};
\draw (y)--(p) node[draw=none,fill=none,font=\small,midway,below ] {$\overline{6}$};
\draw (m)--(n) node[draw=none,fill=none,font=\small,midway,right ] {$\overline{a}$};
\draw (z)--(o) node[draw=none,fill=none,font=\small,midway,above ] {$\overline{b}$};
\draw (z)--(p) node[draw=none,fill=none,font=\small,midway,right ] {$\overline{c}$};
\end{tikzpicture}
\caption{A subgraph for Case III.1 of Theorem~4.2. \label{2Dim-CaseIII1a}}
\end{center}
\end{figure}

Now considering $G_{\overline{1}}$, we again see that this subgraph must be $K_{3,2}$ by Theorem~\ref{1-sphere}. Therefore there must be an edge connecting $y$ and the left endpoint of $\overline{v}$. But this contradicts our assumption, so this case is not possible.

\textbf{Case III.2}. Edges between the endpoints of $\overline{v}$ and $x$ and $y$ are allowed.

Assume without loss of generality that the edge $\overline{a}$ is in $G$ as depicted in Figure~\ref{2Dim-CaseIII2}. Considering $G_{\overline{6}}$, we see that we must have edges $\overline{b}$ and $\overline{c}$ depicted in Figure~\ref{2Dim-CaseIII2a}, as $G_{\overline{6}} = K_{3,2}$ by Theorem~\ref{1-sphere}. Similarly, Theorem~\ref{1-sphere} shows that $G_{\overline{5}}$ and then $G_{\overline{1}}$ must also be $K_{3,2}$, which gives us edges $\overline{d}$ and $\overline{e}$, respectively, as in Figure~\ref{2Dim-CaseIII2b}.

\begin{figure}[p]
\begin{center}
\begin{tikzpicture}
\node[vertex] (x) at (0,1.5) [label={left:$x$}]{};
\node[vertex] (y) at (4,1.5) [label={right:$y$}]{};
\node[vertex] (z) at (3,4) {};
\node[vertex] (m) at (1,4) {};
\node[vertex] (n) at (2,3) {};
\node[vertex] (o) at (2,1.5) {};
\node[vertex] (p) at (2,0) {};
\draw (m)--(z) node[draw=none,fill=none,font=\small,midway,above ] {$\overline{v}$};
\draw (x)--(n) node[draw=none,fill=none,font=\small,midway,above ] {$\overline{1}$};
\draw (x)--(o) node[draw=none,fill=none,font=\small,midway,above right ] {$\overline{2}$};
\draw (x)--(p) node[draw=none,fill=none,font=\small,midway,below ] {$\overline{3}$};
\draw (y)--(n) node[draw=none,fill=none,font=\small,midway,above ] {$\overline{4}$};
\draw (y)--(o) node[draw=none,fill=none,font=\small,midway,above left ] {$\overline{5}$};
\draw (y)--(p) node[draw=none,fill=none,font=\small,midway,below ] {$\overline{6}$};
\draw (m)--(x) node[draw=none,fill=none,font=\small,midway,left ] {$\overline{a}$};
\end{tikzpicture}
\caption{A subgraph for Case III.2 of Theorem~4.2. \label{2Dim-CaseIII2}}
\end{center}
\end{figure}

\begin{figure}[p]
\begin{center}
\begin{tikzpicture}
\node[vertex] (x) at (0,1.5) [label={left:$x$}]{};
\node[vertex] (y) at (4,1.5) [label={right:$y$}]{};
\node[vertex] (z) at (3,4) {};
\node[vertex] (m) at (1,4) {};
\node[vertex] (n) at (2,3) {};
\node[vertex] (o) at (2,1.5) {};
\node[vertex] (p) at (2,0) {};
\draw (m)--(z) node[draw=none,fill=none,font=\small,midway,above ] {$\overline{v}$};
\draw (x)--(n) node[draw=none,fill=none,font=\small,midway,above ] {$\overline{1}$};
\draw (x)--(o) node[draw=none,fill=none,font=\small,midway,above right ] {$\overline{2}$};
\draw (x)--(p) node[draw=none,fill=none,font=\small,midway,below ] {$\overline{3}$};
\draw (y)--(n) node[draw=none,fill=none,font=\small,midway,above ] {$\overline{4}$};
\draw (y)--(o) node[draw=none,fill=none,font=\small,midway,above left ] {$\overline{5}$};
\draw (y)--(p) node[draw=none,fill=none,font=\small,midway,below ] {$\overline{6}$};
\draw (m)--(x) node[draw=none,fill=none,font=\small,midway,left ] {$\overline{a}$};
\draw (z)--(n) node[draw=none,fill=none,font=\small,midway,above ] {$\overline{b}$};
\draw (o)--(2.5,2.75);
\draw (2.5,2.75)--(z) node[draw=none,fill=none,font=\small,midway,right ] {$\overline{c}$};
\end{tikzpicture}
\caption{A subgraph for Case III.2 of Theorem~4.2. \label{2Dim-CaseIII2a}}
\end{center}
\end{figure}

\begin{figure}[p]
\begin{center}
\begin{tikzpicture}
\node[vertex] (x) at (0,1.5) [label={left:$x$}]{};
\node[vertex] (y) at (4,1.5) [label={right:$y$}]{};
\node[vertex] (z) at (3.5,4.3) {};
\node[vertex] (m) at (1.3,4.3) {};
\node[vertex] (n) at (2,3) {};
\node[vertex] (o) at (2,1.5) {};
\node[vertex] (p) at (2,0) {};
\draw (m)--(z) node[draw=none,fill=none,font=\small,midway,above ] {$\overline{v}$};
\draw (x)--(n) node[draw=none,fill=none,font=\small,midway,above ] {$\overline{1}$};
\draw (x)--(o) node[draw=none,fill=none,font=\small,midway,above right ] {$\overline{2}$};
\draw (x)--(p) node[draw=none,fill=none,font=\small,midway,below ] {$\overline{3}$};
\draw (n)--(y) node[draw=none,fill=none,font=\small,midway,below ] {$\overline{4}$};
\draw (y)--(o) node[draw=none,fill=none,font=\small,midway,below ] {$\overline{5}$};
\draw (y)--(p) node[draw=none,fill=none,font=\small,midway,below ] {$\overline{6}$};
\draw (m)--(x) node[draw=none,fill=none,font=\small,midway,left ] {$\overline{a}$};
\draw (z)--(n) node[draw=none,fill=none,font=\small,midway,above ] {$\overline{b}$};
\draw (z)--(o) node[draw=none,fill=none,font=\small,near end,left ] {$\overline{c}$};
\draw (z)--(p) node[draw=none,fill=none,font=\small,near end,left ] {$\overline{d}$};
\draw (m)--(y) node[draw=none,fill=none,font=\small,near start,above ] {$\overline{e}$};
\end{tikzpicture}
\caption{A subgraph for Case III.2 of Theorem~4.2. \label{2Dim-CaseIII2b}}
\end{center}
\end{figure}

Observing the subgraph of $G$ in Figure~\ref{2Dim-CaseIII2b}, we see that $G$ must contain a copy of $K_{4,3}$. 
Since $M(K_{4,3})$ is a triangulation of $T^2$ by Proposition \ref{TorusMatchingProp}, and $T^2$ cannot be a proper submanifold of a connected homology manifold without boundary, $G = K_{4,3}$.   This completes the proof.
\end{proof}

Although the appearance of the torus in Theorem \ref{2dimMan} might lead us to believe that manifold matching complexes are more plentiful in higher dimension, the reverse is actually true. If $d \ge 3,$ then the following theorem shows that the only manifold matching complexes without boundary are spheres.
The graphs are constructed from the set of basic sphere graphs $\mathcal{SG}$ listed in
Equation~\eqref{BasicSphereGraphs}.

\begin{theorem}\label{ManifoldThm} 
Let $G$ be a simple graph such that $M(G)$ is a $d$-dimensional homology  manifold without boundary for some $d \ge 3$.  Then $G$ is the disjoint union of copies of $P_3$, $C_5$, and $K_{3,2}$, and
therefore $M(G)$ is a combinatorial $d$-sphere.
\end{theorem}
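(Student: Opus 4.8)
The plan is to induct on the number of edges of $G$ (equivalently, on the number of vertices of $M(G)$), splitting into the disconnected and connected cases. Throughout I use the Link Lemma (Lemma~\ref{LinkLemma}): the link of a vertex $v$ of $M(G)$ equals $M(G_{\overline v})$, and since $M(G)$ is a homology $d$-manifold without boundary, this link is a homology $(d-1)$-sphere. The first step is to record that for \emph{every} edge $e$ of $G$, the graph $G_e$ is a disjoint union of copies of $P_3$, $C_5$, and $K_{3,2}$. Indeed, $M(G_e)$ is a homology $(d-1)$-sphere: when $d-1 = 2$ this is exactly Theorem~\ref{2-sphere}, and when $d-1 \ge 3$ it follows from the inductive hypothesis applied to $G_e$ (which has strictly fewer edges than $G$), since a homology sphere is in particular a homology manifold without boundary. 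I want to stress that the torus cannot intrude here: although Theorem~\ref{2dimMan} permits $K_{4,3}$ in dimension two, a vertex link in a manifold \emph{without boundary} must have the homology of a sphere, and the torus does not. This is precisely why the exotic two-dimensional behavior disappears once $d \ge 3$.

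For the disconnected case, suppose $G = K \sqcup G'$ with $K$ a connected component and $G'$ nonempty. By Lemma~\ref{DisjointJoinLemma}, $M(G) = M(K) \ast M(G')$, and by Proposition~\ref{JoinManifold} both factors are homology spheres of dimension strictly less than $d$. I would then argue that $K$ is a single basic sphere graph. If $\dim M(K) \ge 3$, the inductive hypothesis (valid since $K$ has fewer edges than $G$) forces $K$ to be a disjoint union of basic sphere graphs; as $K$ is connected this means a single such graph, whose matching complex has dimension at most $1$, contradicting $\dim M(K) \ge 3$. Hence $\dim M(K) \le 2$, and Theorems~\ref{1-sphere} and~\ref{2-sphere} (noting that no connected graph has matching complex $S^2$) pin down $K \in \{P_3, C_5, K_{3,2}\}$. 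Running this over every component shows $G$ is a disjoint union of basic sphere graphs, and Proposition~\ref{SphereMatchingProp} then certifies that $M(G)$ is a combinatorial $d$-sphere. Note that this case never invokes the connected-case analysis below.

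The hard part will be the connected case, where the join decomposition is unavailable; here I must show that no connected graph $G$ has $M(G)$ a homology manifold without boundary of dimension $d \ge 3$ (consistent with the theorem, since every nontrivial disjoint union of basic sphere graphs is itself disconnected). The engine is the structural constraint from the first step: every $G_e$ is a disjoint union of basic sphere graphs, so each has maximum degree at most $3$, and because $d-1 \ge 2$ each $G_e$ has at least two connected components. The lever on degrees is that if some vertex $a$ admits an edge $f$ avoiding the closed neighborhood $N[a]$, then all edges at $a$ survive in $G_f$, forcing $\deg_G(a) \le 3$. Fixing an edge $e = \{a,b\}$, connectedness of $G$ then forces edges from $a$ or $b$ into the several components of $G_e$; inspecting the links of these new edges (each again a disjoint union of basic sphere graphs) repeatedly forces further edges, exactly as in the build-up arguments of Cases~II and~III of Theorem~\ref{2dimMan}. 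I expect this process to terminate by forcing a copy of $K_{4,3}$ inside $G$. If $G = K_{4,3}$, then $M(G)$ is the two-dimensional torus by Proposition~\ref{TorusMatchingProp}, contradicting $d \ge 3$; and if $G$ properly contains $K_{4,3}$, then connectedness supplies an incident edge $e$ for which $G_e$ retains a configuration that is not a disjoint union of basic sphere graphs, contradicting the first step.

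The main obstacle is carrying out this forcing argument uniformly across all $d \ge 3$. Unlike the $d = 2$ situation, where each vertex link is a single cycle $C_4$, $C_5$, or $C_6$, here the links are disjoint unions, so one must track how the components of $G_e$ attach to the endpoints of $e$ and verify, in each branch of the build-up, that any connected completion either violates the degree-$3$ bound, produces some link $G_f$ that is not a disjoint union of basic sphere graphs, or collapses to $K_{4,3}$. Managing the bookkeeping of these forced edges and checking that a bad link always appears is where the real difficulty lies.
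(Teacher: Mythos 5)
Your first step (every vertex link $G_{\overline v}$ is a disjoint union of basic sphere graphs, by induction plus Theorem~\ref{2-sphere}) and your disconnected case (join decomposition via Lemma~\ref{DisjointJoinLemma} and Proposition~\ref{JoinManifold}, then identifying each connected component as $P_3$, $C_5$, or $K_{3,2}$) are correct and essentially match the paper. But the connected case is the heart of the theorem, and you have not proved it --- you explicitly defer it as ``the main obstacle'' and ``where the real difficulty lies,'' offering only the expectation that a forcing argument in the style of Cases~II and~III of Theorem~\ref{2dimMan} will terminate in a copy of $K_{4,3}$ or a bad link. That is a genuine gap, not a routine verification: the build-up in Theorem~\ref{2dimMan} is a long, delicate case analysis even in dimension two, and there is no reason to expect $K_{4,3}$ to be the terminal configuration when $d\ge 3$; indeed your final claim (``if $G$ properly contains $K_{4,3}$, then connectedness supplies an edge $e$ for which $G_e$ is not a disjoint union of basic sphere graphs'') is asserted, not argued.

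What actually closes the connected case is a short counting argument that your setup already makes available, and which renders the whole forcing machinery unnecessary. Every basic sphere graph has at most $6$ edges, so every connected component of every link graph $G_{\overline e}$ has at most $6$ edges. Now fix a vertex $v$ of $M(G)$ and suppose $G$ is connected. Since every edge of $G$ outside $G_{\overline v}$ meets $\overline v$, and distinct components of $G_{\overline v}$ are vertex-disjoint, connectivity forces, for \emph{each} component $K$ of $G_{\overline v}$, an edge of $G$ joining a vertex of $\overline v$ directly to a vertex of $K$. Because $d-1\ge 2$, either $G_{\overline v}$ contains $3P_3$, or it contains a component $J_0\in\{C_5,K_{3,2}\}$ together with at least one other component $H$. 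In the first case, take $\overline 1$ in the first $P_3$: then $G_{\overline 1}$ contains the other two $P_3$'s, the edge $\overline v$, and the two connecting edges, i.e.\ a connected subgraph with $7$ edges. In the second case, take $\overline 1\in H$: then $G_{\overline 1}$ contains all of $J_0$ (at least $5$ edges), the edge $\overline v$, and a connecting edge, again a connected subgraph with at least $7$ edges. Either way $G_{\overline 1}$ has a component with more than $6$ edges, contradicting the first step. So no connected graph qualifies, and the theorem reduces to your disconnected case. Your degree-$3$ observation and the $K_{4,3}$ discussion are not needed for this.
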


\begin{proof}
We will prove that $G$ is disconnected and therefore
$M(G) = M(G_1) \ast M(G_2)$ by Lemma~\ref{DisjointJoinLemma}.
This will imply that $M$ is a combinatorial sphere.

Let $\dim M = d \ge 3$. 
Assume, by way of induction, that if $G'$ is a simple graph with $M(G')$ a homology
$(d-1)$-sphere, then $G'$ is the disjoint union of copies of
the basic sphere graphs in \eqref{BasicSphereGraphs}.
This is already known to be true when $d=3$ by Theorem~\ref{2-sphere}.

Let $v$ be a vertex of $M$. Since $M$ is a homology manifold, $M(G_{\overline{v}}) = \link_{M} v$ is a $(d-1)$-sphere.
By assumption, $G_{\overline{v}}=H \sqcup J$, where $H$ is one of the basic sphere
graphs and $J$  contains at least one of $2P_3$, $C_5$ or $K_{3,2}$.
We will consider each case and show that $G$ must be disconnected in each case.
Therefore $M$ is a sphere.

Now assume that $G$ is connected.
Notice that $G_{\overline{v}}$ cannot contain a connected component that has more than $6$ edges, since $G_{\overline{v}}$ is the disjoint union of copies of elements of $\mathcal{SG}$. 
We will use this fact to reach a contradiction in each of the following cases.

\textbf{Case I}. $G_{\overline{v}}$ contains three disjoint copies of $P_3$. We will label the edges as in Figure~\ref{Manifold-CaseI}.

\begin{figure}[h]
\begin{center}
\begin{tikzpicture}
\node[vertex] (a) at (0,0) {};
\node[vertex] (b) at (0,1) {};
\node[vertex] (c) at (0,2) {};
\node[vertex] (d) at (1,3) {};
\node[vertex] (e) at (2,3) {};
\node[vertex] (f) at (1.5,2) {};
\node[vertex] (g) at (1.5,1) {};
\node[vertex] (h) at (1.5,0) {};
\node[vertex] (i) at (3,2) {};
\node[vertex] (j) at (3,1) {};
\node[vertex] (k) at (3,0) {};
\draw (a)--(b) node[draw=none,fill=none,font=\small,midway,left] {$\overline{2}$};
\draw (b)--(c) node[draw=none,fill=none,font=\small,midway,left] {$\overline{1}$};
\draw (d)--(e) node[draw=none,fill=none,font=\small,midway,above] {$\overline{v}$};
\draw (h)--(g) node[draw=none,fill=none,font=\small,midway,right] {$\overline{4}$};
\draw (g)--(f) node[draw=none,fill=none,font=\small,midway,right] {$\overline{3}$};
\draw (i)--(j) node[draw=none,fill=none,font=\small,midway,right] {$\overline{5}$};
\draw (j)--(k) node[draw=none,fill=none,font=\small,midway,right] {$\overline{6}$};
\end{tikzpicture}
\caption{A subgraph for Case I of Theorem~4.3. \label{Manifold-CaseI}}
\end{center}
\end{figure}
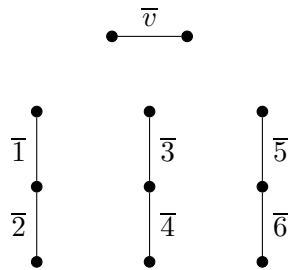

Since $G$ is connected, there must be edges $\overline{a}$ and $\overline{b}$ connecting $\overline{v}$ with the middle and right $P_3$, respectively, in Figure~\ref{Manifold-CaseI}. Then $G_{\overline{1}}$ will contain a connected subgraph with edges $\overline{3}$, $\overline{4}$, $\overline{5}$, $\overline{6}$, $\overline{v}$, $\overline{a}$, and $\overline{b}$.
But $G_{\overline{1}}$ cannot contain a connected subgraph with more than $6$ edges, so this is a contradiction.

\textbf{Case II}.  $G_{\overline{v}}=H\sqcup J$, where $J$ contains at least
one of $C_5$ or $K_{3,2}$.  Note that $J$ has at least 5 edges.
Let $\overline{1}$ be an edge of $H$. Then $G_{\overline{1}}$ contains all edges of $J$,
$\overline{v}$, and at least one edge connecting $\overline{v}$ with
$C_5$ or $K_{3,2}$ from $J$.
Again, we have a contradiction, since
$G_{\overline{1}}$ cannot contain a connected subgraph with more than $6$ edges.

Thus, in both cases a contradiction shows the graph $G$ is disconnected;
write $G=G_1\sqcup G_2$ for nonempty subgraphs $G_1$ and $G_2$.
Then $M(G)=M(G_1)\ast M(G_2)$, and $M(G_1)$, $M(G_2)$ and $M(G)$ are
all spheres by Proposition~\ref{JoinManifold}. Then by the 
induction assumption $G_1$ and $G_2$, and hence $G$, are each disjoint
copies of the basic sphere graphs. Thus, by Proposition~\ref{SphereMatchingProp} $M(G)$ is a combinatorial sphere.
\end{proof}

Thus we see that the only homology manifolds without boundary that occur as
matching complexes are the 2-dimensional torus and combinatorial spheres of all dimensions,
given by Propositions~\ref{TorusMatchingProp} and \ref{SphereMatchingProp} respectively.
Note that the spheres that are matching complexes are all shellable, as they are the joins of shellable spheres.
Also, all of these matching complexes of dimension $d$ have at most 
$3d+3$ vertices, except for the 2-dimensional matching complex of $K_{4,3}$,
which has $12 = 3(2)+6$ vertices.

\section{Manifolds with boundary}\label{Manifolds_w/_Boundary Section}

We turn now to the question of which homology manifolds with boundary 
(of dimension at least 1)  are matching complexes.
Here we find once again that the dimension 2 case has the most
complicated answer.
By Corollary~\ref{disc-cor}, the only disconnected manifolds with boundary
(of dimension at least 1)
that are matching complexes are the matching complex of $C_4$, which is
$2P_2$, and the matching complex of $K_4$, which is $3P_2$.
So in what follows we assume the manifold is connected.
A connected $1$-dimensional  manifold with boundary is just a path.
The nontrivial paths that arise as matching complexes have 2 to 5 vertices;
see Table~\ref{PathMatchingComplexes}.

Next we look at matching complexes of disconnected graphs.
We start by applying Proposition~\ref{JoinManifold} to matching
complexes of disconnected graphs.

\begin{corollary}\label{BallCor}
Let $G$ be a disconnected graph such that $M(G)$ is a
$d$-dimensional manifold with boundary.
Then for some graphs $G_1$ and $G_2$,
$G=G_1\sqcup G_2$,
$M(G)=M(G_1)\ast M(G_2)$, and for some $k$,
$M(G_1)$ is a  
$k$-sphere or $k$-ball, and $M(G_2)$ is a $(d-k-1)$-ball.
Thus $M(G)$ is a ball.

\end{corollary}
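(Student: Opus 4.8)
The plan is to assemble the two structural results already in hand: the Join Lemma (Lemma~\ref{DisjointJoinLemma}) and the join criterion for manifolds with boundary (Proposition~\ref{JoinManifold}, part~2). Since $G$ is disconnected it has at least two connected components; grouping these components into two nonempty subgraphs gives $G = G_1 \sqcup G_2$ with both $G_1$ and $G_2$ nonempty. By Lemma~\ref{DisjointJoinLemma} this disjoint union is reflected on the level of matching complexes as a join, $M(G) = M(G_1) \ast M(G_2)$, and neither factor equals $\{\emptyset\}$.

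Next I would apply Proposition~\ref{JoinManifold}(2) with $X = M(G)$, $\Delta = M(G_1)$, and $\Sigma = M(G_2)$. The hypotheses are satisfied: $M(G)$ is a homology $d$-manifold with boundary by assumption, and it is written as a join of two nonempty simplicial complexes. The conclusion of that proposition supplies three facts simultaneously: $M(G)$ is a homology $d$-ball (which is precisely the final assertion ``$M(G)$ is a ball''); each of $M(G_1)$ and $M(G_2)$ is a homology sphere or a homology ball; and at least one of the two factors is a homology ball.

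It then remains only to reconcile the dimensions and the labeling with the statement. Since at least one factor is a ball, I would relabel $G_1$ and $G_2$ if necessary so that $M(G_2)$ is the factor guaranteed to be a ball; then $M(G_1)$ is a sphere or a ball. Setting $k = \dim M(G_1)$, the join dimension identity $\dim(\Delta \ast \Sigma) = \dim \Delta + \dim \Sigma + 1$ forces $\dim M(G_2) = d - k - 1$, so $M(G_2)$ is a $(d-k-1)$-ball, exactly as claimed.

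Since every step is a direct invocation of a previously established result, I do not anticipate any genuine obstacle: this corollary is essentially a bookkeeping consequence of Lemma~\ref{DisjointJoinLemma} and Proposition~\ref{JoinManifold}. The only points needing a moment's care are the symmetric relabeling that places the ball factor in the second slot and the off-by-one in the dimension count for joins.
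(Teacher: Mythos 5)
Your proposal is correct and is exactly the argument the paper intends: the paper presents this corollary as an immediate consequence of Lemma~\ref{DisjointJoinLemma} (to write $M(G)$ as a nontrivial join) followed by Proposition~\ref{JoinManifold}(2), with the same relabeling and join-dimension bookkeeping you describe. No gaps.
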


From this and the classification of 0- and 1-dimensional matching complexes
that are spheres and balls,  we can find all disconnected graphs that have as
their matching complex a 2-dimensional manifold
with boundary. 

\begin{theorem}
Let  $G$ be a disconnected graph such that $M=M(G)$ is a 2-dimensional
manifold with boundary.  
Then $M$ is a ball, and
$|M|\cong B^0\ast B^1$, $|M|\cong B^0\ast S^1$, or
$|M|\cong S^0\ast B^1$.
Furthermore the pair $G,M(G)$ is one of the pairs in the following
table.

\begin{center}
\begin{tabular}{c|c|l}
{\em Graph $G$} & {\em Manifold $M(G)$} & {\em Description of $M(G)$} \\ \hline
$3P_2$ & $P_1 \ast P_2$ & {\em Triangle} \\
$2P_2 \sqcup P_3$ & $P_1 \ast P_3$ & {\em Two triangles sharing an edge} \\
$P_2 \sqcup P_5$ & $P_1 \ast P_4$ & {\em Chain of three triangles sharing a vertex} \\
$P_2 \sqcup \Gamma$ & $P_1 \ast P_5$ & {\em Chain of four triangles sharing a vertex} \\
$P_2 \sqcup 2P_3$ & $P_1 \ast C_4$ & {\em Triangulated square} \\
$P_2 \sqcup C_5$ & $P_1 \ast C_5$ & {\em Triangulated pentagon} \\
$P_2 \sqcup K_{3,2}$ & $P_1 \ast C_6$ & {\em Triangulated hexagon} \\
$P_3 \sqcup P_5$ &  $2P_1\ast P_4$ & {\em Six triangles: suspension over
path of three edges} \\
$P_3 \sqcup \Gamma$ &  $2P_1\ast P_5$ & {\em Eight triangles: suspension over
path of four edges}
\end{tabular}
\end{center}

\end{theorem}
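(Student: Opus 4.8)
The plan is to refine the two-part splitting of Corollary~\ref{BallCor} all the way down to the decomposition of $G$ into its connected components. Write $G = H_1 \sqcup \cdots \sqcup H_r$ with each $H_i$ connected; since $G$ has no isolated vertices, each $H_i$ carries at least one edge, so $\dim M(H_i) \ge 0$. Repeated use of the Join Lemma (Lemma~\ref{DisjointJoinLemma}) gives $M(G) = M(H_1) \ast \cdots \ast M(H_r)$, and peeling off one factor at a time while applying Proposition~\ref{JoinManifold}(2) shows that every $M(H_i)$ is a sphere or a ball, that at least one of them is a ball, and that $M(G)$ is itself a ball. This already establishes the first assertion of the theorem.

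Next I would carry out the dimension count. Because $\dim(\Delta \ast \Sigma) = \dim \Delta + \dim \Sigma + 1$, the join above has dimension $\sum_i \dim M(H_i) + (r-1)$, which must equal $2$; hence $\sum_i \dim M(H_i) = 3 - r$. Together with $\dim M(H_i) \ge 0$ and $r \ge 2$ (as $G$ is disconnected), this forces $r \in \{2,3\}$: for $r = 3$ all three factors are $0$-dimensional, and for $r = 2$ the two factors have dimensions $0$ and $1$.

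It then remains to classify the connected graphs $H$ whose matching complex is a sphere or ball of dimension $0$ or $1$. A connected $H$ with $M(H)$ a $0$-ball has a single edge, so $H = P_2$; with $M(H)$ a $0$-sphere it has exactly two incident edges, so $H = P_3$. For the $1$-dimensional factors, Theorem~\ref{1-sphere} shows the only \emph{connected} realizers of a $1$-sphere are $C_5$ and $K_{3,2}$ (the remaining realizer $2P_3$ being disconnected), while for $1$-balls Proposition~\ref{NoPathLemma} and Table~\ref{PathMatchingComplexes}, together with a direct edge-incidence argument, show the only connected realizers are $P_5$ (giving the complex $P_4$) and $\Gamma$ (giving $P_5$); the shorter path complexes $P_2$ and $P_3$ come only from the disconnected graphs $2P_2$ and $P_3 \sqcup P_2$, so they never occur as a component. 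Assembling the factors under the ``at least one ball'' constraint, which eliminates precisely the all-sphere joins $S^0 \ast S^1$ (when $r=2$) and $S^0 \ast S^0 \ast S^0$ (when $r=3$), yields exactly the nine graphs listed, and the induced topology is $B^0 \ast B^1$, $B^0 \ast S^1$, or $S^0 \ast B^1$; here one uses $S^0 \ast S^0 = C_4 = S^1$ and $B^0 \ast S^0 = P_3 = B^1$ to record the three $r=3$ complexes in these terms.

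I expect the main obstacle to be organizing the enumeration so that it is simultaneously complete and non-redundant. Passing to connected components, rather than iterating the two-part split of Corollary~\ref{BallCor}, is the key device: a single complex generally admits several join decompositions (for example $B^0 \ast B^0 \ast S^0$ and $B^0 \ast P_3$ name the same complex), whereas the component decomposition is canonical and counts each graph once. The only point requiring genuine computation is the uniqueness of the connected realizers of the $1$-balls $P_4$ and $P_5$, which is the analogue of the banner-graph incidence analysis already appearing in the proof of Proposition~\ref{NoPathLemma}.
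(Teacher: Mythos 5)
Your proposal is correct and is essentially the argument the paper intends: the paper gives this theorem no separate proof, presenting it as an immediate consequence of Corollary~\ref{BallCor} (join decomposition via Lemma~\ref{DisjointJoinLemma} and Proposition~\ref{JoinManifold}) combined with the classifications of $0$- and $1$-dimensional matching complexes that are spheres and balls, which is exactly the enumeration you carry out. Your refinement of decomposing $G$ into connected components (rather than iterating the two-block split of Corollary~\ref{BallCor}) and then identifying the \emph{connected} realizers of each low-dimensional sphere and ball is a tidy way to make that enumeration complete and non-redundant, but it is the same underlying argument, not a different route.
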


A surprising variety of 2-dimensional manifolds arise as matching complexes of
connected graphs.

\begin{theorem}\label{2d_with_boundary}
Let $G$ be a connected graph such that $M=M(G)$ is a 2-dimensional
manifold with boundary.  
Then $M$ is one of the following four topological types, arising from
the following graphs.
\begin{enumerate}
\item $M$ is a ball.  $G$ is the spider graph $\mbox{Sp}_3$.
\item $M$ is a triangulated annulus.  $G$ is the following graph with 7 vertices
      and 8 edges:

\vspace{6pt}

\begin{tikzpicture}
\draw (3,2)--(1,2)--(0,0)--(2,0)--(3,2)--(5,2)--(6,0)--(4,0)--(3,2);
\filldraw[black] (0,0) circle (2pt);
\filldraw[black] (2,0) circle (2pt);
\filldraw[black] (4,0) circle (2pt);
\filldraw[black] (6,0) circle (2pt);
\filldraw[black] (1,2) circle (2pt);
\filldraw[black] (3,2) circle (2pt);
\filldraw[black] (5,2) circle (2pt);
\end{tikzpicture}


\item $M$ is a triangulated M\"{o}bius strip.
      \begin{enumerate}
      \item $G= C_7$, the 7-cycle
      \item $G$ is the following graph with 7 vertices and 8 edges:

\begin{tikzpicture}
\draw (0,0)--(1,0)--(2,0)--(3,1)--(2,2)--(1,2)--(0,2)--(0,0);
\draw (1,0)--(1,2);
\filldraw[black] (0,0) circle (2pt);
\filldraw[black] (1,0) circle (2pt);
\filldraw[black] (2,0) circle (2pt);
\filldraw[black] (0,2) circle (2pt);
\filldraw[black] (1,2) circle (2pt);
\filldraw[black] (2,2) circle (2pt);
\filldraw[black] (3,1) circle (2pt);
\end{tikzpicture}
      \item $G$ is the following graph with 7 vertices and 9 edges:

\begin{tikzpicture}
\draw (0,0)--(1,0)--(2,0)--(3,1)--(2,2)--(1,2)--(0,2)--(0,0);
\draw (1,0)--(1,2);
\draw (0,0)--(2,2);
\filldraw[black] (0,0) circle (2pt);
\filldraw[black] (1,0) circle (2pt);
\filldraw[black] (2,0) circle (2pt);
\filldraw[black] (0,2) circle (2pt);
\filldraw[black] (1,2) circle (2pt);
\filldraw[black] (2,2) circle (2pt);
\filldraw[black] (3,1) circle (2pt);
\end{tikzpicture}
      \item $G$ is the following graph with 7 vertices and 10 edges:

\begin{tikzpicture}
\filldraw[black] (0,0) circle (2pt);
\filldraw[black] (1,0) circle (2pt);
\filldraw[black] (2,0) circle (2pt);
\filldraw[black] (3,1) circle (2pt);
\filldraw[black] (0,2) circle (2pt);
\filldraw[black] (1,2) circle (2pt);
\filldraw[black] (2,2) circle (2pt);
\draw (0,0)--(1,0)--(2,0)--(3,1)--(2,2)--((1,2)--(0,2)--(2,0);
\draw (0,2)--(0,0)--(2,2);
\draw (1,0)--(1,2);
\end{tikzpicture}

      \end{enumerate}

\item $M$ is a triangulated torus with a 2-ball removed.

\begin{tikzpicture}[scale = 0.5]
\shadedraw [rotate=0] (0,0) ellipse (100pt and 50pt);
\draw (-1,0) to[bend left] (1,0);
\draw (-1.2,.1) to[bend right] (1.2,.1);
\path [fill=white] (-1,0) to[bend left] (1,0) to[bend left] (-1,0);
\filldraw [gray!15] (1.7,-.5) ellipse (18pt and 9pt);
\draw (1.7,-.5) ellipse (18pt and 9pt);
\end{tikzpicture}

      \begin{enumerate}

      \item $G$ is the following graph with 7 vertices and 9 edges:

\begin{tikzpicture}[scale=0.5]
\draw (0,4.8)--(0,2)--(2,0)--(4,2)--(4,4.8)--(2,6.8)--(2,4)--(4,2);
\draw (0,2)--(2,4);
\draw (0,4.8)--(2,6.8);
\filldraw[black] (0,2) circle (4pt);
\filldraw[black] (2,0) circle (4pt);
\filldraw[black] (4,2) circle (4pt);
\filldraw[black] (0,4.8) circle (4pt);
\filldraw[black] (2,4) circle (4pt);
\filldraw[black] (4,4.8) circle (4pt);
\filldraw[black] (2,6.8) circle (4pt);
\end{tikzpicture}


      \item $G$ is the following graph with 7 vertices and 10 edges:

\begin{tikzpicture}[scale=0.7]
\draw (0,0)--(2,0)--(4,0)--(3,1)--(2,2)--(1,1)--(0,0);
\draw (0,0)--(2,4)--(4,0);
\draw (2,0)--(2,2)--(2,4);
\filldraw[black] (0,0) circle (3pt);
\filldraw[black] (2,0) circle (3pt);
\filldraw[black] (4,0) circle (3pt);
\filldraw[black] (1,1) circle (3pt);
\filldraw[black] (3,1) circle (3pt);
\filldraw[black] (2,2) circle (3pt);
\filldraw[black] (2,4) circle (3pt);
\end{tikzpicture}
      \item $G$ is the following graph with 7 vertices and 11 edges:

\begin{tikzpicture}
\filldraw[black] (0,0) circle (2pt);
\filldraw[black] (1.5,0) circle (2pt);
\filldraw[black] (3,0) circle (2pt);
\filldraw[black] (0,2) circle (2pt);
\filldraw[black] (1.5,2) circle (2pt);
\filldraw[black] (3,2) circle (2pt);
\filldraw[black] (1.5,3) circle (2pt);
\draw (0,0)--(1.5,0)--(3,0)--(3,2)--(1.5,2)--(0,2)--(0,0);
\draw (3,0)--(0,2)--(1.5,3)--(3,2)--(0,0);
\draw (1.5,0)--(1.5,2);
\end{tikzpicture}

     \end{enumerate}
\end{enumerate}
\end{theorem}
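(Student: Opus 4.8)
The plan is to turn the problem into a local one via the Link Lemma (Lemma~\ref{LinkLemma}) and then globalize. Since $M = M(G)$ is a $2$-dimensional homology manifold with boundary, it is pure (every maximal face is a triangle), so the matching number of $G$ is exactly $3$, and the link of each vertex $v$ is either a $1$-sphere or a $1$-ball. Because $\link_M v = M(G_{\overline{v}})$, the graph $G_{\overline{v}}$ has a cycle or path as its matching complex: by Theorem~\ref{1-sphere} the \emph{interior} vertices $v$ are precisely those with $G_{\overline{v}} \in \{2P_3, C_5, K_{3,2}\}$, and by Table~\ref{PathMatchingComplexes} (together with the uniqueness argument used there, which runs exactly as in Theorem~\ref{1-sphere}) the \emph{boundary} vertices are precisely those with $G_{\overline{v}} \in \{P_2, 2P_2, P_3 \sqcup P_2, P_5, \Gamma\}$. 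In every allowed case each connected component of $G_{\overline{v}}$ has at most $6$ edges, exactly the bound exploited in Theorem~\ref{ManifoldThm}.

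First I would fix the global hypotheses: $G$ is connected, $M$ has nonempty boundary (otherwise it is covered by Theorem~\ref{2dimMan}), and $\partial M$, being a closed $1$-manifold, is a disjoint union of cycles. Hence $M$ has at least one boundary vertex and, being $2$-dimensional, at least one interior vertex. The heart of the argument is then a growth procedure: starting from an interior vertex $v$, the subgraph $G_{\overline{v}}$ is one of the three ``cores'' $2P_3$, $C_5$, $K_{3,2}$; since $G$ is connected every remaining edge must meet $\overline{v}$, and each such edge $\overline{w}$ in turn forces $G_{\overline{w}}$ to be one of the admissible cycle- or path-graphs above. Propagating these constraints---using connectivity, the no-induced-$P_6$ bound (Proposition~\ref{NoPathLemma}), and the ``at most $6$ edges per component of $G_{\overline{w}}$'' restriction to keep the graph from growing without bound---I expect the vertex count to be pinned down tightly (indeed $|V(G)| = 7$ in every surviving case) and $G$ to be forced into the finite list in the statement. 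This is the same engine used in Theorems~\ref{2-sphere}, \ref{2dimMan}, and \ref{ManifoldThm}, now run in the regime where both cycles and paths occur as vertex links.

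Finally, for each surviving graph $G$ I would build $M(G)$ explicitly and identify its homeomorphism type. The Euler characteristic, computed from $\chi(M) = V_M - \frac{1}{3}E_M - \frac{1}{3}E_M^{\partial}$ (where $V_M,E_M$ count vertices and edges of $M$ and $E_M^{\partial}$ counts boundary edges), already separates the ball ($\chi = 1$), the annulus and M\"{o}bius strip ($\chi = 0$, single boundary circle), and the once-punctured torus ($\chi = -1$, with $H_1$ of rank $2$). The remaining and genuinely topological step is to distinguish the orientable annulus from the non-orientable M\"{o}bius strip, which $\chi$ cannot detect and which must be settled by an explicit check of how the triangles are glued along the boundary circle (equivalently, by testing orientability of the triangulation).

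The main obstacle is twofold. The first difficulty is the completeness and bookkeeping of the growth analysis: many near-isomorphic configurations arise as edges are attached to the core, and one must carefully merge isomorphic cases and rule out the dead ends without omitting a genuine graph. The second, subtler difficulty is the topological identification---reliably separating the annulus from the M\"{o}bius strip, and confirming the genus in the $\chi = -1$ cases---since these questions require a direct examination of each triangulation rather than numerical invariants alone.
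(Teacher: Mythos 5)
Your overall framework --- classify vertex links via the Link Lemma, propagate constraints through the connected graph $G$, then identify each surviving complex topologically --- is the same engine the paper uses, but your plan is anchored on a false claim that prevents it from running. You assert that $M$, being $2$-dimensional, must contain an interior vertex, and you start the growth procedure at such a vertex. This is not true for homology $2$-manifolds with boundary, and it fails in four of the nine cases of the theorem itself: in $M(\mbox{Sp}_3)$ every vertex has link $P_2$ or $P_4$; in the annulus of part 2 all eight vertices lie on the two boundary squares; in $M(C_7)$ every vertex has link $P_4$ (since $G_{\overline{e}} = P_5$ for each edge $e$ of $C_7$); and in the punctured torus of part 4(a) the boundary is a $9$-cycle passing through all nine vertices. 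These complexes have interior edges and interior triangles but no interior vertices, so a propagation that begins ``from an interior vertex $v$ with $G_{\overline{v}} \in \{2P_3, C_5, K_{3,2}\}$'' can never reach them, and your classification would omit the ball, the annulus, and two of the other surfaces. The paper instead anchors its case analysis at \emph{boundary} vertices, which are guaranteed to exist because $\partial M \neq \varnothing$: its Cases I--IV are indexed by the smallest boundary link occurring in $M$ (that is, $G_{\overline{v}} \in \{2P_2,\ P_3 \sqcup P_2,\ P_5,\ \Gamma\}$), and interior vertices enter the analysis only when the propagation forces them.

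There is also a smaller error: your list of boundary links includes $G_{\overline{v}} = P_2$, but then $\link_M v = M(P_2) = P_1$ is a single point, which is $0$-dimensional and not a $1$-ball, so $P_2$ cannot occur; the correct list is $\{2P_2,\, P_3 \sqcup P_2,\, P_5,\, \Gamma\}$. The rest of your outline is sound: the interior-link classification is right, the Euler characteristic formula $\chi = V - \tfrac{1}{3}E - \tfrac{1}{3}E_{\partial}$ is correct, and you correctly note that orientability must be checked directly to separate the annulus from the M\"{o}bius strip (the paper does this by exhibiting each triangulation with explicit edge identifications). But as proposed, the argument cannot be completed without replacing its starting point by a boundary-vertex analysis.
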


\begin{proof}
The proof relies heavily on analysis of the links of vertices in the
manifold.  We start by reviewing the possible structures of these links.
We showed previously (Lemma~\ref{LinkLemma}) that the
link of a vertex $v$ of $M$ is an induced subcomplex of $M$,  which
is then the matching complex of the subgraph $G_{\overline{v}}$ of $G$.
We write $x_v$ and $y_v$ for the vertices
of the edge $\overline{v}$ of $G$.

Assume $G$ is a connected graph and $M=M(G)$ is a 2-dimensional manifold
with boundary.

If $v$ is a boundary vertex of $M$, then the link of $v$ is a
1-dimensional ball, that is, a path $P_j$, $2\le j\le 5$, so the
corresponding subgraph $G_{\overline{v}}$ of $G$ is in the set $\{2P_2,P_3\sqcup P_2,
P_5,\Gamma\}$.  Also, the endpoints of $P_j$ are also boundary vertices of
$M$.

If $v$ is an interior vertex of $M$, then the link of $v$ is a
1-dimensional sphere, that is, a cycle $C_j$, $4\le j\le 6$, so the
corresponding subgraph $G_{\overline{v}}$ of $G$ is in the set
$\{2P_3,C_5,K_{3,2}\}$.

Note that in all cases $G_{\overline{v}}$ does not contain a triangle.
In fact, it is easy to see that this implies that the entire graph $G$ does
not contain a triangle: if an edge $\overline{v}$ were disjoint from the triangle,
$G_{\overline{v}}$ would contain a triangle; otherwise all edges of $G$  would
contain a vertex of the triangle, and for any such edge $\overline{v}$, $G_{\overline{v}}$
would not be in either of the sets above.

We split the proof into cases based on the structure of the links of
boundary vertices.

\textbf{Case I}.  For some boundary vertex $v$, $\link_{M}(v) = P_2$,
so $G_{\overline{v}}= 2P_2$.  Let $1$ and $2$ be the neighbors of $v$ in
$M$.
Thus $G_{\overline{v}}$ has the two nonincident edges $\overline{1}$ and $\overline{2}$,
which are also boundary vertices, and all other edges of $G$ are incident
to $\overline{v}$.
Since $G$ is connected,
$G$  must have an edge connecting $\overline{v}$ and $\overline{1}$,
say (without loss of generality) $\overline{3} = \{x_v,x_1\}$.
Since $G_{\overline{2}}$
does not contain a triangle, $G$ does not contain edges
$\{x_v,y_1\}$ and $\{y_v,x_1\}$.
We split Case I into two subcases, depending on whether the edge
$\{y_v,y_1\}$ is in $G$.

\textbf{Case I.1}.  (See Figure~\ref{CaseI1}.) If $\{y_v,y_1\}$ is in $G$, then
$G_{\overline{2}}$ contains a 4-cycle, so must be $\Gamma$ (since $2$ is also
a boundary vertex), and all edges of $G$ besides $\overline{2}$ and the edges
of $\Gamma$ are edges with one endpoint in $\overline{v}$ and one endpoint in
$\overline{2}$. By connectivity, there must be at least one such edge. Say
without loss of generality it is the edge $\{y_2,y_v\}$.
The pendant edge of $\Gamma$ is either $\{x_v,z\}$ or $\{y_v,z\}$, as
shown in Figure~\ref{CaseI1}.
Since $G_{\overline{1}}$ does not contain a triangle, $G$ does not contain
edges $\{y_2,x_v\}$ or $\{x_2,y_v\}$.
The graph $G$ may or may not contain the edge $\{x_2,x_v\}$.  In all
cases $G_{\overline{3}}$ is either $P_4$ or a $P_4$ with an additional edge on one
of its interior vertices; these are not possible.  

\begin{figure}[p]
\begin{center}
\begin{tikzpicture}
\draw (0,0)--(2,0) node[draw=none,fill=none,font=\footnotesize,midway,below] {$\overline{1}$};
\draw (0,0)--(0,3) node[draw=none,fill=none,font=\footnotesize,midway,left] {$\overline{3}$};
\draw (0,3)--(2,3) node[draw=none,fill=none,font=\footnotesize,midway,above] {$\overline{v}$};
\draw (4,0)--(4,3) node[draw=none,fill=none,font=\footnotesize,midway,right] {$\overline{2}$};
\draw (4,0) node[below right] {$x_2$}--
      (4,3) node[above right] {$y_2$}--
      (2,3) node[above] {$y_v$}--
      (0,3) node[above left] {$x_v$}--
      (0,0) node[below left] {$x_1$}--
      (2,0) node[below] {$y_1$}--(2,3);
\draw [dashed] (0,3)--(4,0);
\draw [dashed] (0,3)--(1,4) node[above] {$z$}--(2,3);
\filldraw[black] (0,0) circle (2pt);
\filldraw[black] (2,0) circle (2pt);
\filldraw[black] (4,0) circle (2pt);
\filldraw[black] (0,3) circle (2pt);
\filldraw[black] (2,3) circle (2pt);
\filldraw[black] (4,3) circle (2pt);
\filldraw[black] (1,4) circle (2pt);
\end{tikzpicture}
\caption{Graph for Case I.1; $G$ contains exactly one of the edges from $z$ and
may or may not contain the edge $\{x_v,x_2\}$. \label{CaseI1}}
\end{center}
\end{figure}
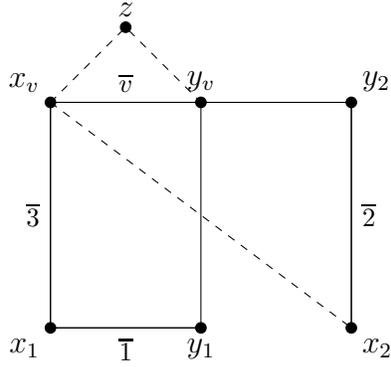

\textbf{Case I.2}. (See Figure~\ref{CaseI2}.)  
If $\{y_v,y_1\}$ is not in $G$, then
all remaining edges of $G$ are incident to $\overline{v}$, but not to $\overline{1}$.
(Otherwise $G_{\overline{2}}$ would have a triangle, as noted above.)
The subgraph $G_{\overline{2}}$ contains the path
$P_4: y_1, x_1, x_v, y_v$.
Since 2 is a boundary vertex of $M$, its link is a ball, so
$G_{\overline{2}}$ is $P_5$ or $\Gamma$.
With no more edges incident to $\overline{1}$, $\Gamma$ is not possible,
so $G_{\overline{2}}$ is $P_5$.
That is, there is exactly one more
edge in $G$ not incident to $\overline{2}$, call it $\overline{4}$,
and it contains the vertex $y_v$.
Now turn again to $G_{\overline{1}}$.  It contains edges $\overline{v}$, $\overline{4}$,
$\overline{2}$, and any other edges (at least one) incident to $\overline{v}$ and
$\overline{2}$.
Since 1 is a boundary vertex of $M$, $G_{\overline{1}}$ must also be
$P_5$ or $\Gamma$.
If it is $\Gamma$, then $G$ contains exactly one edge $\overline{5}$
containing a vertex of $\overline{2}$ and the vertex $y_{\overline{v}}$.  But then
the graph $G_{\overline{3}}$ is $P_4$, which is not possible.
This leaves only the possibility that $G_{\overline{1}}$ is $P_5$, and the
graph $G$ is the spider graph $\mbox{Sp}_3$.  
This is part~1 of the Theorem.

\begin{figure}[p]
\begin{center}
\begin{tikzpicture}
\draw (0,0)--(2,0) node[draw=none,fill=none,font=\footnotesize,midway,below] {$\overline{1}$};
\draw (0,0)--(0,3) node[draw=none,fill=none,font=\footnotesize,midway,left] {$\overline{3}$};
\draw (0,3)--(2,3) node[draw=none,fill=none,font=\footnotesize,midway,above] {$\overline{v}$};
\draw (2,3)--(1,4) node[draw=none,fill=none,font=\footnotesize,midway,right] {$\overline{4}$};
\draw (4,0)--(4,3) node[draw=none,fill=none,font=\footnotesize,midway,right] {$\overline{2}$};
\draw (4,0)--(0,3);
\draw (4,0) node[below right] {$x_2$}--
      (4,3) node[above right] {$y_2$};
\draw (2,3) node[right] {$y_v$}--
      (0,3) node[above left] {$x_v$}--
      (0,0) node[below left] {$x_1$}--
      (2,0) node[below] {$y_1$};
\draw (1,4) node[above] {$z$}--(2,3);
\filldraw[black] (0,0) circle (2pt);
\filldraw[black] (2,0) circle (2pt);
\filldraw[black] (4,0) circle (2pt);
\filldraw[black] (0,3) circle (2pt);
\filldraw[black] (2,3) circle (2pt);
\filldraw[black] (4,3) circle (2pt);
\filldraw[black] (1,4) circle (2pt);
\end{tikzpicture}
\caption{Graph for Case I.2 \label{CaseI2}}
\end{center}
\end{figure}
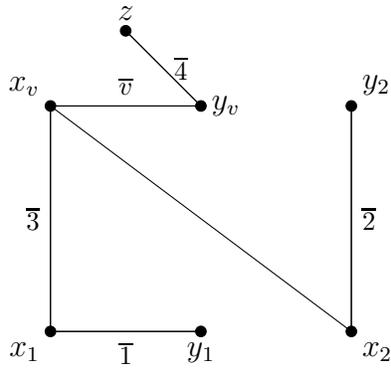

\textbf{Case II}. No boundary vertex has $\link_{M}(v) = P_2$, and for
some boundary vertex $v$, $\link_{M}(v) = P_3$, so
$G_{\overline{v}}=P_3\sqcup P_2$.
Let the link of $v$ have vertices 1, 2, 3 (forming a path in that order).
Then 1 and 3 are boundary vertices, and $G_{\overline{v}}$ consists of the
path with two edges $\overline{1}$ and $\overline{3}$ and the disjoint edge $\overline{2}$.
Write the common vertex in $G$ of $\overline{1}$ and $\overline{3}$ as $y_1$, with
$x_1$ and $x_3$ their other vertices.  Besides $\overline{v}$, $\overline{1}$, $\overline{2}$
and $\overline{3}$, every other edge of $G$ must contain a vertex of $\overline{v}$.
Also, $G$ is connected, so there must be at least one edge between a vertex
of $\overline{v}$ and a vertex of $\overline{2}$.  Without loss of generality, $G$
contains the edge $\overline{4}=\{x_v,x_2\}$.  Since $G$ contains no triangles,
the only other possible edge between $\overline{v}$ and $\overline{2}$ is
$\{y_v,y_2\}$.  We split Case II into two cases, depending on whether that
edge is in $G$.

\textbf{Case II.1}. Assume $\{y_v,y_2\}$ is not an edge of $G$.  Consider
$G_{\overline{3}}$. It contains the path
$P_4: y_v, x_v, x_2, y_2$, so it is either $P_5$ or $\Gamma$.  Without the
edge $\{y_v,y_2\}$, $G_{\overline{3}}$ cannot be $\Gamma$, so $G_{\overline{3}}$ is
$P_5: z, y_v, x_v, x_2, y_2$. All remaining edges of $G$ must connect the
edge $\overline{v}$ with the edge $\overline{3}$.  If $z=x_1$, then $G_{\overline{1}}$
contains the path $P_4: y_v,x_v,x_2,y_2$ and at most one other edge
connecting $\overline{v}$ and $x_3$. The subgraph $G_{\overline{1}}$ must then
be $P_5: x_3,y_v,x_v,x_2,y_2$. Then $G_{\overline{4}}$ contains a cycle of length 4,
and no edge of $G$ can complete it to $\Gamma$.
If $z$ is not in the edge $\overline{1}$, then $G_{\overline{2}}$ contains
$2P_3$ (edges $\overline{1}$, $\overline{3}$, $\overline{v}$ and $\{z,y_v\}$).
But then there can be no edges between vertices of $\overline{v}$ and vertices of
$\overline{1}$ and $\overline{3}$, contradicting the connectedness of $G$.

\textbf{Case II.2}. Assume $\overline{5}=\{y_v,y_2\}$ is an edge of $G$.  Since $G$ does
not contain a triangle, all remaining edges of $G$ contain a vertex of
$\overline{v}$, but no vertex of $\overline{2}$.  Also, the subgraphs $G_{\overline{1}}$
and $G_{\overline{3}}$ each contain the $C_4: x_v, y_v, y_2, x_2, x_v$, so
must be copies of $\Gamma$.  We consider what additional edge or edges
are needed for this.

\textbf{Case II.2.a}. Suppose an additional edge contains (without loss of
generality) the vertex $x_v$ and a vertex $z$ not in $\overline{1}$ or $\overline{3}$.
Then there must be one more edge to make $G$ connected,
and it must contain the vertex $y_1$ common to edges $\overline{1}$
and $\overline{3}$ (so as not to change $G_{\overline{1}}$ or $G_{\overline{3}}$). 
But this would result in an invalid subgraph for $G_{\overline{2}}$,
since a connected component of $G_{\overline{2}}$ would contain six vertices,
but this is impossible for basic sphere graphs.

\textbf{Case II.2.b}. (See Figure~\ref{II2b}.) Suppose there are edges $\overline{6}$ and $\overline{7}$
from the same vertex $x_v$ (without
loss of generality) to vertices $x_1$ and $x_3$.
The result is the union of two 4-cycles with a common vertex,
with matching complex an annulus, a
manifold with boundary.
Any other edge of $G$ must be incident to $\overline{v}$.  In addition, the graph
so far contains copies of $\Gamma$ for $G_{\overline{1}}$ and $G_{\overline{3}}$, so any other edge of $G$ must be incident to
the edges $\overline{v}$, $\overline{1}$, and $\overline{3}$. Since $G$ does not contain a triangle, edge $y_1x_v$ does not exist. Also, $G$ does not contain edge $y_1y_v$ because of $G_{\overline{4}}$.  So Case II.2.b must be the graph and its matching complex
shown in
Figure~\ref{II2b}.  This is part~2 of the Theorem.

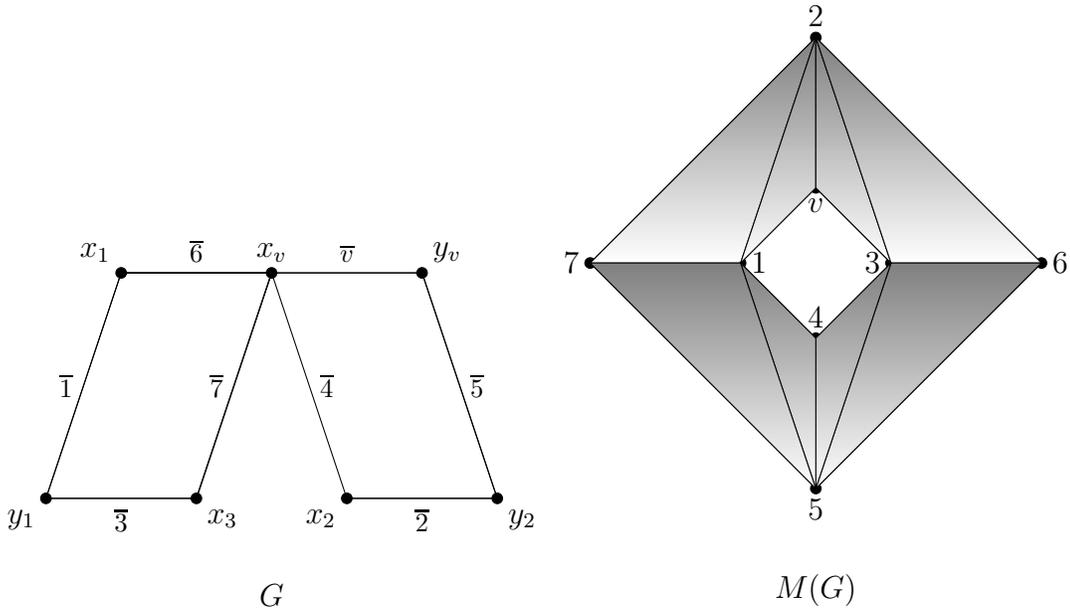
\begin{figure}[h]
\begin{tikzpicture}
\draw (1,3)--(0,0) node[draw=none,fill=none,font=\footnotesize,midway,left] {$\overline{1}$};
\draw (1,3)--(3,3) node[draw=none,fill=none,font=\footnotesize,midway,above] {$\overline{6}$};
\draw (2,0)--(3,3) node[draw=none,fill=none,font=\footnotesize,midway,left] {$\overline{7}$};
\draw (0,0)--(2,0) node[draw=none,fill=none,font=\footnotesize,midway,below] {$\overline{3}$};
\draw (2,0)--(3,3)--(1,3);
\draw (3,3)--(4,0) node[draw=none,fill=none,font=\footnotesize,midway,right] {$\overline{4}$};
\draw (4,0)--(6,0) node[draw=none,fill=none,font=\footnotesize,midway,below] {$\overline{2}$};
\draw (6,0)--(5,3) node[draw=none,fill=none,font=\footnotesize,midway,right] {$\overline{5}$};
\draw (5,3)--(3,3) node[draw=none,fill=none,font=\footnotesize,midway,above] {$\overline{v}$};
\draw (0,0) node[below left] {$y_1$}--
      (2,0) node[below right] {$x_3$}--
      (3,3) node[above] {$x_v$}--
      (1,3) node[above left] {$x_1$}--(0,0);
\draw (4,0) node[below left] {$x_2$}--
      (6,0) node[below right] {$y_2$}--
      (5,3) node[above right] {$y_v$}--(3,3);
\filldraw[black] (0,0) circle (2pt);
\filldraw[black] (2,0) circle (2pt);
\filldraw[black] (4,0) circle (2pt);
\filldraw[black] (6,0) circle (2pt);
\filldraw[black] (1,3) circle (2pt);
\filldraw[black] (3,3) circle (2pt);
\filldraw[black] (5,3) circle (2pt);
\draw (3,-1) node[below] {$G$};
\end{tikzpicture}
\begin{tikzpicture}
\draw (0,3) node[left] {$7$}--
      (3,0) node[below] {$5$}--
      (6,3) node[right] {$6$}--
      (3,6) node[above] {$2$}--(0,3);
\draw (2,3) node[right] {$1$}--
      (3,2) node[above] {$4$}--
      (4,3) node[left] {$3$}--
      (3,4) node[below] {$v$}--(2,3);
\draw (0,3)--(2,3)--(3,6)--(4,3)--(3,0)--(2,3);
\draw (3,4)--(3,6);
\draw (3,0)--(3,2);
\draw (4,3)--(6,3);
\filldraw[black] (0,3) circle (2pt);
\filldraw[black] (3,0) circle (2pt);
\filldraw[black] (6,3) circle (2pt);
\filldraw[black] (3,6) circle (2pt);
\filldraw[black] (2,3) circle (2pt);
\filldraw[black] (3,2) circle (2pt);
\filldraw[black] (4,3) circle (2pt);
\filldraw[black] (3,4) circle (2pt);
\shadedraw (0,3) --  (3,6) -- (2,3) -- (0,3);
\shadedraw (2,3) --  (3,6) -- (3,4) -- (2,3);
\shadedraw (3,4) --  (3,6) -- (4,3) -- (3,4);
\shadedraw (4,3) --  (3,6) -- (6,3) -- (4,3);
\shadedraw (4,3) --  (6,3) -- (3,0) -- (4,3);
\shadedraw (4,3) --  (3,0) -- (3,2) -- (4,3);
\shadedraw (3,2) --  (3,0) -- (2,3) -- (3,2);
\shadedraw (0,3) --  (2,3) -- (3,0) -- (0,3);
\draw (3,-1) node[below] {$M(G)$};
\end{tikzpicture}
\caption{Graph and Matching Complex for Case II.2.b\label{II2b}}
\end{figure}

\begin{figure}[h]
\begin{center}
\begin{tikzpicture}
\draw (0,0)--(1,0) node[draw=none,fill=none,font=\footnotesize,midway,below] {$\overline{5}$};
\draw (1,0)--(2,0) node[draw=none,fill=none,font=\footnotesize,midway,below] {$\overline{7}$};
\draw (2,0)--(3,1) node[draw=none,fill=none,font=\footnotesize,midway,right] {$\overline{3}$};
\draw (3,1)--(2,2) node[draw=none,fill=none,font=\footnotesize,midway,right] {$\overline{1}$};
\draw (2,2)--(1,2) node[draw=none,fill=none,font=\footnotesize,midway,above] {$\overline{6}$};
\draw (1,2)--(0,2) node[draw=none,fill=none,font=\footnotesize,midway,above] {$\overline{4}$};
\draw (0,2)--(0,0) node[draw=none,fill=none,font=\footnotesize,midway,left] {$\overline{2}$};
\draw (1,0)--(1,2) node[draw=none,fill=none,font=\footnotesize,midway,right] {$\overline{v}$};
\draw (0,0) node[below] {$y_2$};
\draw (1,0) node[below] {$y_v$};
\draw (2,0) node[below] {$x_3$};
\draw (0,2) node[above] {$x_2$};
\draw (1,2) node[above] {$x_v$};
\draw (2,2) node[above] {$x_1$};
\draw (3,1) node[right] {$y_1$};
\filldraw[black] (0,0) circle (2pt);
\filldraw[black] (1,0) circle (2pt);
\filldraw[black] (2,0) circle (2pt);
\filldraw[black] (0,2) circle (2pt);
\filldraw[black] (1,2) circle (2pt);
\filldraw[black] (2,2) circle (2pt);
\filldraw[black] (3,1) circle (2pt);
\draw (1,-1) node[below] {$G$};
\end{tikzpicture}
\begin{tikzpicture}[scale=.7]
\shadedraw (1,0)--(2,2)--(0,2)--(1,0);
\shadedraw (1,0)--(3,0)--(2,2)--(1,0);
\shadedraw (3,0)--(4,2)--(2,2)--(3,0);
\shadedraw (3,0)--(5,0)--(4,2)--(3,0);
\shadedraw (5,0)--(6,2)--(4,2)--(5,0);
\shadedraw (5,0)--(7,0)--(6,2)--(5,0);
\shadedraw (7,0)--(8,2)--(6,2)--(7,0);
\shadedraw (2,2)--(4,2)--(4,3.5)--(2,2);
\shadedraw (4,2)--(6,2)--(4,3.5)--(4,2);
\filldraw[black] (1,0) circle (2pt);
\filldraw[black] (3,0) circle (2pt);
\filldraw[black] (5,0) circle (2pt);
\filldraw[black] (7,0) circle (2pt);
\filldraw[black] (0,2) circle (2pt);
\filldraw[black] (2,2) circle (2pt);
\filldraw[black] (4,2) circle (2pt);
\filldraw[black] (6,2) circle (2pt);
\filldraw[black] (8,2) circle (2pt);
\filldraw[black] (4,3.5) circle (2pt);
\draw (1,0) node[below left] {5};
\draw (3,0) node[below] {6};
\draw (5,0) node[below] {7};
\draw (7,0) node[below right] {4};
\draw (0,2) node[above left] {4};
\draw (2,2) node[above] {3};
\draw (4,2) node[above right] {2};
\draw (6,2) node[above] {1};
\draw (8,2) node[above right] {5};
\draw (4,3.5) node[above] {$v$};
\draw[directed] (0,2)--(1,0);
\draw[directed] (7,0)--(8,2);
\draw (4,-1) node[below] {$M(G)$};
\end{tikzpicture}
\caption{Graph and Matching Complex for Case II.2.c\label{II2c}}
\end{center}
\end{figure}
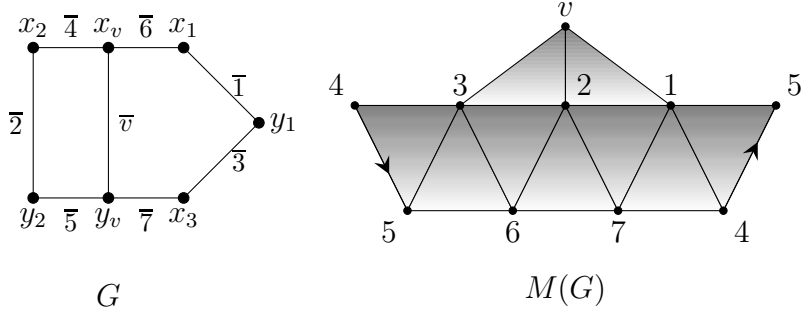

\textbf{Case II.2.c}. Otherwise, the graphs $G_{\overline{1}}$ and $G_{\overline{3}}$
are copies of $\Gamma$ with pendant edges containing different vertices of
$\overline{v}$.  Without loss of generality, the graph $G$ contains
edges $\overline{6}= \{x_v,x_1\}$ and $\overline{7}=\{y_v,x_3\}$.
Any other edge of $G$ would create a triangle.  So the graph $G$
and its matching complex are shown in
Figure~\ref{II2c}.  In this, and in subsequent drawings of
matching complexes arrows show the identification of edges.
This is part~3(b) of the Theorem.

\textbf{Case III}.
No boundary vertex has $\link_{M}(v)= P_2$ or $P_3$, and for some
boundary vertex $v$, $\mbox{link}_{M}(v) = P_4$, so $G_{\overline{v}}=P_5$.
Let the link of $v$ have vertices 1, 2, 3, 4 (forming a path in that order).
Then 1 and 4 are boundary vertices of $M$, $G_{\overline{1}}$ and $G_{\overline{4}}$
are each either $P_5$ or $\Gamma$,  and $G_{\overline{v}}$ consists
of the path with consecutive edges $\overline{2}$, $\overline{4}$, $\overline{1}$,
$\overline{3}$ and vertices $x_2, y_2, x_1, y_1, y_3$. Besides the edges
$\overline{v}$, $\overline{1}$, $\overline{2}$, $\overline{3}$, and $\overline{4}$, every other edge of
$G$ must contain a vertex of $\overline{v}$.
The vertex 2 (and similarly 3) could be either a boundary vertex or an
interior vertex of~$M$.

\textbf{Case III.1}.
Assume 2 is a boundary vertex of $M$. 
(The case where $3$ is a boundary vertex
follows by symmetry.) Then $G_{\overline{2}}= P_5$ or~$\Gamma$.

\textbf{Case III.1.a}.  Assume $G_{\overline{2}}= P_5$.
Since $G_{\overline{2}}$ contains edges $\overline{v}$, $\overline{1}$ and $\overline{3}$, it
contains either $\{x_v,x_1\}$ or $\{x_v,y_3\}$ ($x_v$ being either vertex
of $\overline{v}$), and all other edges of $G$ contain a vertex of $\overline{v}$
and a vertex of $\overline{2}$.
If $G_{\overline{2}}$ contained $\{x_v,x_1\}$, then $G_{\overline{4}}$ would contain
edge $\overline{3}$ and no other edges incident to $\overline{3}$; this cannot happen in
$P_5$ or $\Gamma$.  Thus
$G_{\overline{2}}$ contains $\overline{5}=\{x_v,y_3\}$.  Now consider $G_{\overline{4}}$; it
contains the path $P_4: \overline{v}, \overline{5}, \overline{3}$, and it must be $P_5$
or $\Gamma$.
Since all other edges of $G$ contain a vertex of $\overline{v}$ and a vertex of
$\overline{2}$, $G_{\overline{4}}$ must contain the edge $\overline{6}=\{y_v,x_2\}$.
Thus $G$ contains the 7-cycle
$C_7: y_v,x_v,y_3,y_1,x_1,y_2,x_2,y_v$.
Since $G_{\overline{3}}$ cannot contain a triangle, the only other  possible edge
in $G$ is $\overline{7}=\{x_v,y_2\}$.  However if edge $\overline{7}$ is in $G$, then
the corresponding subgraph $G_{\overline{7}}$ would be $P_2\sqcup P_3$,
so $M$ would
fall into Case II.  So $G$ must be $C_7$.  Then $M$ is a triangulated
M\"{o}bius strip with triangles $215$, $156$, $564$, $643$, $43v$, $3v2$, $v21$.
This is part~3(a) of the Theorem.

\textbf{Case III.1.b}.  Assume $G_{\overline{2}}= \Gamma$.  Since all remaining edges
of $G$ contain a vertex of $\overline{v}$, $G_{\overline{2}}$ is either the 4-cycle
$C_4: x_1, x_v, y_v, y_1, x_1$ with pendant edge $\{y_1,y_3\}$ or the 4-cycle
$C_4: y_1, x_v, y_v, y_3, y_1$ with pendant edge $\{x_1,y_1\}$.
The first alternative is not possible, because it would leave $G_{\overline{1}}$
with only four vertices ($x_2, y_2, x_v, y_v$).  So assume $G$
contains edges $\overline{5}= \{y_1,x_v\}$ and $\overline{6}=\{y_3,y_v\}$.
All remaining edges of $G$ must contain a vertex of $\overline{v}$ and a vertex of
$\overline{2}$.
The 4-cycle with edges $\overline{3},\overline{5}, \overline{v}, \overline{6}$ is also in
$G_{\overline{4}}$, so $G_{\overline{4}}$ must also be $\Gamma$, and $G$ must contain
an edge connecting $x_2$ to $\overline{v}$.
Since $G_{\overline{5}}$ contains edges $\overline{2}$, $\overline{4}$, and $\overline{6}$ and,
being in Case III, $G_{\overline{5}}$ cannot be $P_3\sqcup P_2$, there must be
another edge containing $y_v$ and completing a $P_5$, namely, the edge
$\overline{7}=\{x_2,y_v\}$.

Finally, $G_{\overline{1}}$ contains edges
$\overline{2}$, $\overline{6}$, $\overline{7}$, and $\overline{v}$ and must be $\Gamma$ with
edge $\overline{8}=\{y_2,x_v\}$.
Every other edge of $G$ must be incident to $\overline{v}$ and $\overline{2}$,
but such an edge would create a triangle in $G$.  So the graph $G$ has
just the nine edges described.
The matching complex $M$ is a manifold with triangles
$745$, $456$, $562$, $612$, $12v$, $2v3$, $v34$, $34v$, $618$, $187$, and
$873$. See Figure~\ref{III1b}.
We observe that after gluing together the identified edges, we obtain a $2$-manifold with boundary homeomorphic to the torus with a $2$-ball removed from the surface.
(The 2-ball is bounded by a cycle $1,7,5,2,3,8,6,4,v,1$.)
This is part~4(a) of the Theorem.

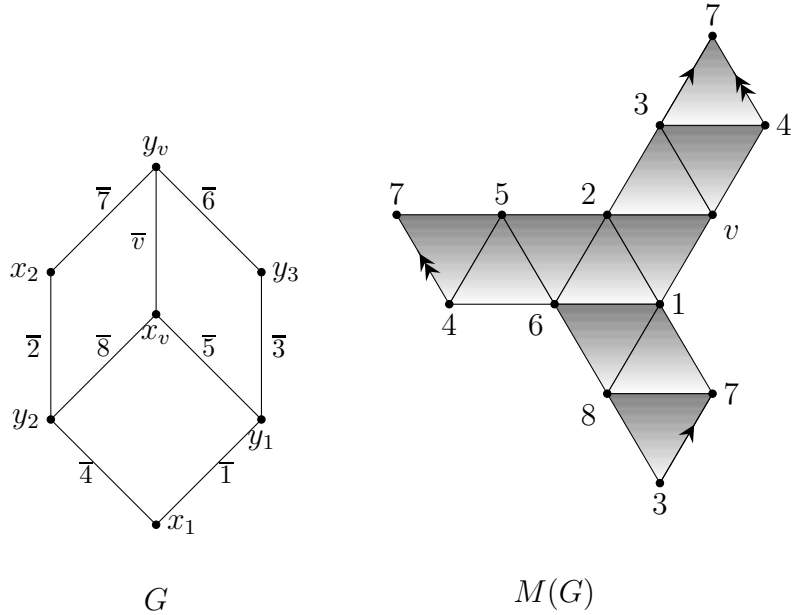
\begin{figure}[h]
\begin{center}
\begin{tikzpicture}[scale=.7]
\draw (0,4.8)--(0,2) node[draw=none,fill=none,font=\footnotesize,midway,left] {$\overline{2}$};
\draw (0,2)--(2,0) node[draw=none,fill=none,font=\footnotesize,midway,left] {$\overline{4}$};
\draw (2,0)--(4,2) node[draw=none,fill=none,font=\footnotesize,midway,right] {$\overline{1}$};
\draw (4,2)--(4,4.8) node[draw=none,fill=none,font=\footnotesize,midway,right] {$\overline{3}$};
\draw (4,4.8)--(2,6.8) node[draw=none,fill=none,font=\footnotesize,midway,above] {$\overline{6}$};
\draw (2,6.8)--(2,4) node[draw=none,fill=none,font=\footnotesize,midway,left] {$\overline{v}$};
\draw (2,4)--(4,2) node[draw=none,fill=none,font=\footnotesize,midway,above] {$\overline{5}$};
\draw (0,2)--(2,4) node[draw=none,fill=none,font=\footnotesize,midway,above] {$\overline{8}$};
\draw (0,4.8)--(2,6.8) node[draw=none,fill=none,font=\footnotesize,midway,above] {$\overline{7}$};
\draw (0,4.8) node[left] {$x_2$};
\draw (0,2) node[left] {$y_2$};
\draw (2,0) node[right] {$x_1$};
\draw (4,2) node[below] {$y_1$};
\draw (4,4.8) node[right] {$y_3$};
\draw (2,6.8) node[above] {$y_v$};
\draw (2,4) node[below] {$x_v$};
\filldraw[black] (0,2) circle (2pt);
\filldraw[black] (2,0) circle (2pt);
\filldraw[black] (4,2) circle (2pt);
\filldraw[black] (0,4.8) circle (2pt);
\filldraw[black] (2,4) circle (2pt);
\filldraw[black] (4,4.8) circle (2pt);
\filldraw[black] (2,6.8) circle (2pt);
\draw (2,-1) node[below] {$G$};
\end{tikzpicture}
\hspace*{18pt}
\begin{tikzpicture}[scale=.7]
\shadedraw (1,0)--(2,1.7)--(0,1.7)--(1,0);
\shadedraw (1,0)--(3,0)--(2,1.7)--(1,0);;
\shadedraw (3,0)--(4,1.7)--(2,1.7)--(3,0);
\shadedraw (3,0)--(5,0)--(4,1.7)--(3,0);
\shadedraw (5,0)--(6,1.7)--(4,1.7)--(5,0);
\shadedraw (4,1.7)--(6,1.7)--(5,3.4)--(4,1.7);
\shadedraw (6,1.7)--(7,3.4)--(5,3.4)--(6,1.7);
\shadedraw (5,3.4)--(7,3.4)--(6,5.1)--(5,3.4);
\shadedraw (4,-1.7)--(5,0)--(3,0)--(4,-1.7);
\shadedraw (4,-1.7)--(6,-1.7)--(5,0)--(4,-1.7);
\shadedraw (4,-1.7)--(5,-3.4)--(6,-1.7)--(4,-1.7);
\draw[directed] (5,3.4)--(6,5.1);
\draw[directed] (5,-3.4)--(6,-1.7);
\draw[directed] (6.6,4.08)--(6.5,4.25);
\draw[directed] (6.5,4.25)--(6.4,4.42);
\draw[directed] (0.6,0.68)--(0.5,0.85);
\draw[directed] (0.5,0.85)--(0.4,1.02);
\draw (0,1.7) node[above] {7};
\draw (2,1.7) node[above] {5};
\draw (4,1.7) node[above left] {2};
\draw (5,3.4) node[above left] {3};
\draw (6,5.1) node[above] {7};
\draw (7,3.4) node[right] {4};
\draw (6,1.7) node[below right] {$v$};
\draw (5,0) node[right] {1};
\draw (6,-1.7) node[right] {7};
\draw (5,-3.4) node[below] {3};
\draw (4,-1.7) node[below left] {8};
\draw (3,0) node[below left] {6};
\draw (1,0) node[below] {4};
\filldraw[black] (0,1.7) circle (2pt);
\filldraw[black] (2,1.7) circle (2pt);
\filldraw[black] (4,1.7) circle (2pt);
\filldraw[black] (5,3.4) circle (2pt);
\filldraw[black] (6,5.1) circle (2pt);
\filldraw[black] (7,3.4) circle (2pt);
\filldraw[black] (6,1.7) circle (2pt);
\filldraw[black] (5,0) circle (2pt);
\filldraw[black] (6,-1.7) circle (2pt);
\filldraw[black] (5,-3.4) circle (2pt);
\filldraw[black] (4,-1.7) circle (2pt);
\filldraw[black] (3,0) circle (2pt);
\filldraw[black] (1,0) circle (2pt);
\draw (3,-5) node[below] {$M(G)$};
\end{tikzpicture}
\end{center}
\caption{Graph and Matching Complex for Case III.1.b\label{III1b}}
\end{figure}

\textbf{Case III.2}.
Assume vertices 2 and 3 are interior vertices of $M$.  Then
$G_{\overline{2}}\in\{2P_3, C_5, K_{3,2}\}$.  We consider each of those
cases.

\textbf{Case III.2.a}. If $G_{\overline{2}}=2P_3$, then $G_{\overline{2}}$ consists
of the path with edges $\overline{1}$ and $\overline{3}$ and another $P_3$ path
with edges $\overline{v}$ and $\overline{5}$, for some edge $\overline{5}$ containing a
vertex of $\overline{v}$ and another vertex, not in the edges $\overline{1}$,
$\overline{2}$, $\overline{3}$, $\overline{4}$.  All other edges of $G$ must be incident
to  $\overline{2}$ and $\overline{v}$.  Then $G_{\overline{4}}$ is a disconnected subgraph
of $G$, with the edge $\overline{3}$ forming one component.  However, 4 is a
boundary vertex of $M$, so this is not possible (in case III).

\textbf{Case III.2.b}. If $G_{\overline{2}}=C_5$, then without loss of generality
$G$ contains the edges $\overline{5}=\{x_1,x_v\}$ and $\overline{6}=\{y_3,y_v\}$
(so $G_{\overline{2}}$ is $C_5: x_1,y_1,y_3,y_v,x_v,x_1$), and all other edges
of $G$ contain a vertex of $\overline{v}$ and a vertex of $\overline{2}$.  Then
$G_{\overline{4}}$, which contains $P_4: y_1,y_3,y_v,x_v$, must be $P_5$,
ending in edge $\overline{7}=\{x_v,x_2\}$.  Now $G_{\overline{5}}$ contains
$P_3\sqcup P_2$, with vertices $y_1, y_3, y_v; x_2, y_2$.
If $G_{\overline{5}}$ is just $P_3\sqcup P_2$, then it is covered in Case II.
Otherwise, there is one more edge $\overline{8} = \{y_v, y_2\}$, making
$G_{\overline{5}}= P_5$.  Any other edge would form a triangle in $G$, so
we have described all edges of $G$; $G$ and $M$ are shown in
Figure~\ref{III2b}.
This is part~3(c) of the Theorem.

\begin{figure}[h]
\begin{center}

\begin{tikzpicture}
\draw (0,0)--(1,0) node[draw=none,fill=none,font=\footnotesize,midway,below] {$\overline{4}$};
\draw (1,0)--(2,0) node[draw=none,fill=none,font=\footnotesize,midway,below] {$\overline{1}$};
\draw (2,0)--(3,1) node[draw=none,fill=none,font=\footnotesize,midway,right] {$\overline{3}$};
\draw (3,1)--(2,2) node[draw=none,fill=none,font=\footnotesize,midway,right] {$\overline{6}$};
\draw (2,2)--(1,2) node[draw=none,fill=none,font=\footnotesize,midway,above] {$\overline{v}$};
\draw (1,2)--(0,2) node[draw=none,fill=none,font=\footnotesize,midway,above] {$\overline{7}$};
\draw (0,2)--(0,0) node[draw=none,fill=none,font=\footnotesize,midway,left] {$\overline{2}$};
\draw (1,1)--(2,2) node[draw=none,fill=none,font=\footnotesize,midway,below] {$\overline{8}$};
\draw (1,0)--(1,1) node[draw=none,fill=none,font=\footnotesize,midway,right] {$\overline{5}$};
\draw (0,0)--(1,1);
\draw (1,1)--(1,2);
\filldraw[black] (0,0) circle (2pt);
\filldraw[black] (1,0) circle (2pt);
\filldraw[black] (2,0) circle (2pt);
\filldraw[black] (0,2) circle (2pt);
\filldraw[black] (1,2) circle (2pt);
\filldraw[black] (2,2) circle (2pt);
\filldraw[black] (3,1) circle (2pt);
\draw (1,-1) node[below] {$G$};
\end{tikzpicture}
\hspace*{12pt}
\begin{tikzpicture}[scale=.7]
\shadedraw (1,0)--(2,2)--(0,2)--(1,0);
\shadedraw (1,0)--(3,0)--(2,2)--(1,0);
\shadedraw (3,0)--(4,2)--(2,2)--(3,0);
\shadedraw (3,0)--(5,0)--(4,2)--(3,0);
\shadedraw (5,0)--(6,2)--(4,2)--(5,0);
\shadedraw (5,0)--(7,0)--(6,2)--(5,0);
\shadedraw (7,0)--(8,2)--(6,2)--(7,0);
\shadedraw (0,2)--(2,2)--(3,3.5)--(0,2);
\shadedraw (2,2)--(4,2)--(3,3.5)--(2,2);
\shadedraw (4,2)--(5,3.5)--(3,3.5)--(4,2);
\shadedraw (4,2)--(6,2)--(5,3.5)--(4,2);
\shadedraw (6,2)--(8,2)--(5,3.5)--(6,2);
\filldraw[black] (1,0) circle (2pt);
\filldraw[black] (3,0) circle (2pt);
\filldraw[black] (5,0) circle (2pt);
\filldraw[black] (7,0) circle (2pt);
\filldraw[black] (0,2) circle (2pt);
\filldraw[black] (2,2) circle (2pt);
\filldraw[black] (4,2) circle (2pt);
\filldraw[black] (6,2) circle (2pt);
\filldraw[black] (8,2) circle (2pt);
\filldraw[black] (3,3.5) circle (2pt);
\filldraw[black] (5,3.5) circle (2pt);
\draw (1,0) node[below left] {6};
\draw (3,0) node[below] {4};
\draw (5,0) node[below] {$v$};
\draw (7,0) node[below right] {1};
\draw (0,2) node[left] {1};
\draw (2,2) node[above left] {7};
\draw (4,2) node[above] {3};
\draw (6,2) node[above right] {2};
\draw (8,2) node[right] {6};
\draw (3,3.5) node[above] {8};
\draw (5,3.5) node[above] {5};
\draw[directed] (0,2)--(1,0);
\draw[directed] (7,0)--(8,2);
\draw (4,-1) node[below] {$M(G)$};
\end{tikzpicture}
\caption{Graph and Matching Complex for Case III.2.b\label{III2b}}
\end{center}
\end{figure}
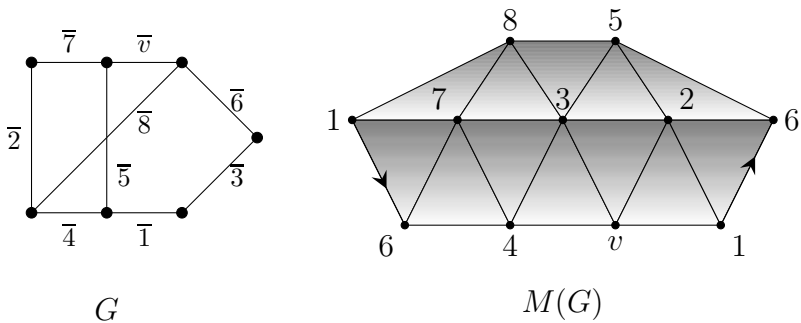

\textbf{Case III.2.c}.  If $G_{\overline{2}}=K_{3,2}$, then without loss of
generality $G$ contains the three edges $\overline{5}=\{x_v,y_1\}$,
$\overline{6}=\{y_v,y_3\}$ and $\overline{7}=\{y_v,x_1\}$.  Then $G_{\overline{4}}$
contains $C_4: y_1,x_v,y_v,y_3,y_1$, and so must be $\Gamma$.  To complete
$\Gamma$, there must be one more edge from $x_2$ to $\overline{v}$.
If that edge is $\{x_2,x_v\}$, then $G_{\overline{3}}$ is $C_5$, and that case
is covered in Case III.2.b.  So let $\overline{8}=\{x_2,y_v\}$.  Then $G_{\overline{1}}$
contains three edges containing vertex $y_v$, so must be $\Gamma$.
Therefore $G$ also contains the edge $\overline{9}=\{y_2,x_v\}$.  Any other
edge would create a triangle or would disrupt $G_{\overline{1}}$ or $G_{\overline{4}}$,
so we have described all edges of $G$; $G$ and $M(G)$ are shown in
Figure~\ref{III2c}.
This is part~4(b) of the Theorem.

\begin{figure}[h]
\begin{tikzpicture}
\draw (0,0)--(2,0)--(4,0)--(3,1)--(2,2)--(1,1)--(0,0);
\draw (0,0)--(2,4)--(4,0);
\draw (2,0)--(2,2)--(2,4);
\filldraw[black] (0,0) circle (2pt);
\filldraw[black] (2,0) circle (2pt);
\filldraw[black] (4,0) circle (2pt);
\filldraw[black] (1,1) circle (2pt);
\filldraw[black] (3,1) circle (2pt);
\filldraw[black] (2,2) circle (2pt);
\filldraw[black] (2,4) circle (2pt);
\draw (0,0)--(2,0) node[draw=none,fill=none,font=\footnotesize,midway,below] {$\overline{4}$};
\draw (2,0)--(4,0) node[draw=none,fill=none,font=\footnotesize,midway,below] {$\overline{1}$};
\draw (0,0)--(1,1) node[draw=none,fill=none,font=\footnotesize,midway,right] {$\overline{2}$};
\draw (1,1)--(2,2) node[draw=none,fill=none,font=\footnotesize,midway,left] {$\overline{8}$};
\draw (2,0)--(2,2) node[draw=none,fill=none,font=\footnotesize,midway,right] {$\overline{7}$};
\draw (4,0)--(3,1) node[draw=none,fill=none,font=\footnotesize,midway,left] {$\overline{3}$};
\draw (3,1)--(2,2) node[draw=none,fill=none,font=\footnotesize,midway,right] {$\overline{6}$};
\draw (0,0)--(2,4) node[draw=none,fill=none,font=\footnotesize,midway,left] {$\overline{9}$};
\draw (4,0)--(2,4) node[draw=none,fill=none,font=\footnotesize,midway,right] {$\overline{5}$};
\draw (2,2)--(2,4) node[draw=none,fill=none,font=\footnotesize,midway,right] {$\overline{v}$};
\draw (2,-1) node[below] {$G$};
\end{tikzpicture}
\hspace{18pt}
\begin{tikzpicture}
\shadedraw (0,1.3)--(2,1.3)--(1,3)--(0,1.3);
\shadedraw (2,1.3)--(3,3)--(1,3)--(2,1.3);
\shadedraw (2,1.3)--(4,1.3)--(3,3)--(2,1.3);
\shadedraw (4,1.3)--(5,3)--(3,3)--(4,1.3);
\shadedraw (4,1.3)--(6,1.3)--(5,3)--(4,1.3);
\shadedraw (6,1.3)--(7,3)--(5,3)--(6,1.3);
\shadedraw (6,1.3)--(8,1.3)--(7,3)--(6,1.3);
\shadedraw (1,3)--(2,4.7)--(0,4.7)--(1,3);
\shadedraw (1,3)--(3,3)--(2,4.7)--(1,3);
\shadedraw (3,3)--(4,4.7)--(2,4.7)--(3,3);
\shadedraw (3,3)--(5,3)--(4,4.7)--(3,3);
\shadedraw (5,3)--(6,4.7)--(4,4.7)--(5,3);
\shadedraw (5,3)--(7,3)--(6,4.7)--(5,3);
\shadedraw (7,3)--(8,4.7)--(6,4.7)--(7,3);
\draw[directed] (2,1.3)--(0,1.3);
\draw[directed] (8,4.7)--(6,4.7);
\draw[directed] (6.8,1.3)--(7,1.3);
\draw[directed] (7,1.3)--(7.2,1.3);
\draw[directed] (0.8,4.7)--(1,4.7);
\draw[directed] (1,4.7)--(1.2,4.7);
\draw (0,1.3) node[below left] {9};
\draw (2,1.3) node[below] {1};
\draw (4,1.3) node[below] {$v$};
\draw (6,1.3) node[below] {4};
\draw (8,1.3) node[below right] {5};
\draw (1,3) node[left] {6};
\draw (3,3) node[above] {2};
\draw (5,3) node[above] {3};
\draw (7,3) node[right] {8};
\draw (0,4.7) node[above left] {4};
\draw (2,4.7) node[above] {5};
\draw (4,4.7) node[above] {7};
\draw (6,4.7) node[above] {9};
\draw (8,4.7) node[above right] {1};
\filldraw[black] (0,1.3) circle (2pt);
\filldraw[black] (2,1.3) circle (2pt);
\filldraw[black] (4,1.3) circle (2pt);
\filldraw[black] (6,1.3) circle (2pt);
\filldraw[black] (8,1.3) circle (2pt);
\filldraw[black] (1,3) circle (2pt);
\filldraw[black] (3,3) circle (2pt);
\filldraw[black] (5,3) circle (2pt);
\filldraw[black] (7,3) circle (2pt);
\filldraw[black] (0,4.7) circle (2pt);
\filldraw[black] (2,4.7) circle (2pt);
\filldraw[black] (4,4.7) circle (2pt);
\filldraw[black] (6,4.7) circle (2pt);
\filldraw[black] (8,4.7) circle (2pt);
\draw (4,.25) node[below] {$M(G)$};
\end{tikzpicture}

\caption{Graph and Matching Complex for Case III.2.c\label{III2c}}
\end{figure}
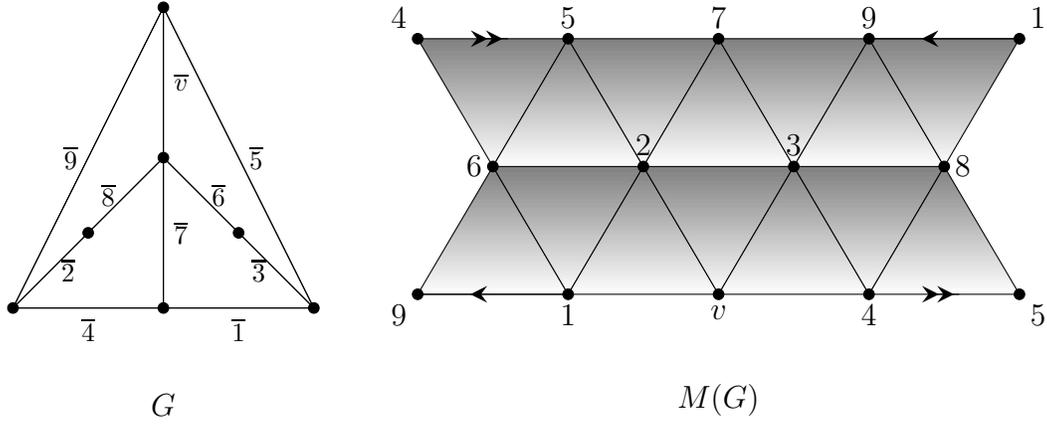

\textbf{Case IV}.
All boundary vertices $v$ of $M$ have $\mbox{link}_{M}(v)=P_5$, so
$G_{\overline{v}}=\Gamma$.
Fix such a boundary vertex $v$, and let the link of $v$ have vertices
1, 2, 3, 4, 5 (forming a path in that order).  Then 1 and 5 are boundary
vertices of $M$, and $G_{\overline{1}}$ and $G_{\overline{5}}$ are both $\Gamma$.
See Figure~\ref{IV} for the subgraph of $G$ containing $\overline{v}$ and
$G_{\overline{v}}$.
Every other edge of $G$ must contain a vertex of $\overline{v}$.

\begin{figure}[h]
\begin{center}
\begin{tikzpicture}
\draw (.5,3)--(1.5,3) node[draw=none,fill=none,font=\footnotesize,midway,above] {$\overline{v}$};
\draw (0,1)--(1,2) node[draw=none,fill=none,font=\footnotesize,midway,above left] {$\overline{4}$};
\draw (1,2)--(2,1) node[draw=none,fill=none,font=\footnotesize,midway,above right] {$\overline{1}$};
\draw (2,1)--(3,1) node[draw=none,fill=none,font=\footnotesize,midway,above] {$\overline{3}$};
\draw (2,1)--(1,0) node[draw=none,fill=none,font=\footnotesize,midway,below right] {$\overline{5}$};
\draw (1,0)--(0,1) node[draw=none,fill=none,font=\footnotesize,midway,below left] {$\overline{2}$};
\filldraw[black] (0,1) circle (2pt);
\filldraw[black] (2,1) circle (2pt);
\filldraw[black] (3,1) circle (2pt);
\filldraw[black] (1,0) circle (2pt);
\filldraw[black] (1,2) circle (2pt);
\filldraw[black] (.5,3) circle (2pt);
\filldraw[black] (1.5,3) circle (2pt);
\draw (0,1) node[left] {$y_2$};
\draw (2,1) node[below right] {$y_1$};
\draw (3,1) node[right] {$x_3$};
\draw (1,0) node[below] {$x_2$};
\draw (1,2) node[above] {$x_1$};
\draw (.5,3) node[above left] {$x_v$};
\draw (1.5,3) node[above right] {$y_v$};
\end{tikzpicture}
\caption{Subgraph $G_{\overline{v}}$ for Case IV.\label{IV}}
\end{center}
\end{figure}

Now consider $G_{\overline{1}}$, which is also $\Gamma$ (since 1 is a
boundary vertex of $M$).  The graph $G_{\overline{1}}$ contains the edges
$\overline{2}$ and $\overline{v}$, but not the edges $\overline{3}$, $\overline{4}$, or $\overline{5}$.
To complete $G_{\overline{1}}$ to $\Gamma$, there must be three more edges,
all incident to $\overline{v}$.
Without loss of generality, two of those edges are
$\overline{6}=\{x_v,y_2\}$ and $\overline{7}= \{y_v,x_2\}$.
If the third edge is not incident to $\overline{3}$, then $G_{\overline{3}}$
contains $G_{\overline{1}}$ and the edge $\overline{4}$.  With six edges,
$G_{\overline{3}}$ must be $K_{3,2}$, with vertex partition (without loss of
generality)
$\{y_2, y_v\}$, $\{x_v, x_1, x_2\}$.  Then $G_{\overline{5}}$ contains
the 4-cycle $x_v, y_2, x_1, y_v$, but this cannot be completed to a $\Gamma$
using only edges incident to $\overline{v}$ and $\overline{1}$.
Thus, one of the edges of $G_{\overline{1}}$ is incident to $\overline{3}$.
There are two possibilities, depending on its vertex in
edge $\overline{v}$.  See Figure~\ref{IVa}, where the vertices and edges are
placed differently to illustrate the constructions to follow.

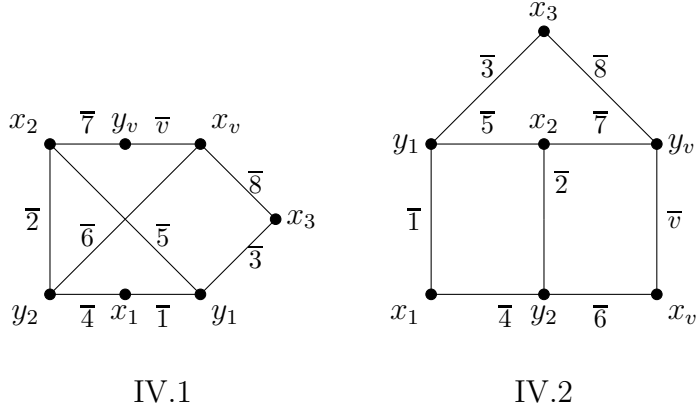
\begin{figure}
\begin{center}
\begin{tikzpicture}
\filldraw[black] (0,0) circle (2pt);
\filldraw[black] (1,0) circle (2pt);
\filldraw[black] (2,0) circle (2pt);
\filldraw[black] (3,1) circle (2pt);
\filldraw[black] (0,2) circle (2pt);
\filldraw[black] (1,2) circle (2pt);
\filldraw[black] (2,2) circle (2pt);
\draw (0,0)--(1,0) node[draw=none,fill=none,font=\footnotesize,midway,below] {$\overline{4}$};
\draw (1,0)--(2,0) node[draw=none,fill=none,font=\footnotesize,midway,below] {$\overline{1}$};
\draw (2,0)--(3,1) node[draw=none,fill=none,font=\footnotesize,midway,right] {$\overline{3}$};
\draw (3,1)--(2,2) node[draw=none,fill=none,font=\footnotesize,midway,right] {$\overline{8}$};
\draw (1,2)--(2,2) node[draw=none,fill=none,font=\footnotesize,midway,above] {$\overline{v}$};
\draw (0,2)--(1,2) node[draw=none,fill=none,font=\footnotesize,midway,above] {$\overline{7}$};
\draw (0,0)--(0,2) node[draw=none,fill=none,font=\footnotesize,midway,left] {$\overline{2}$};
\draw (0,0)--(1,1) node[draw=none,fill=none,font=\footnotesize,midway,above] {$\overline{6}$};
\draw (1,1)--(2,2);
\draw (0,2)--(1,1);
\draw (1,1)--(2,0) node[draw=none,fill=none,font=\footnotesize,midway,above] {$\overline{5}$};
\draw (0,0) node[below left] {$y_2$};
\draw (1,0) node[below] {$x_1$};
\draw (2,0) node[below right] {$y_1$};
\draw (0,2) node[above left] {$x_2$};
\draw (1,2) node[above] {$y_v$};
\draw (2,2) node[above right] {$x_v$};
\draw (3,1) node[right] {$x_3$};
\draw (1.5, -1) node[below] {IV.1};
\end{tikzpicture}
\hspace*{12pt}
\begin{tikzpicture}
\filldraw[black] (0,0) circle (2pt);
\filldraw[black] (1.5,0) circle (2pt);
\filldraw[black] (3,0) circle (2pt);
\filldraw[black] (0,2) circle (2pt);
\filldraw[black] (1.5,2) circle (2pt);
\filldraw[black] (3,2) circle (2pt);
\filldraw[black] (1.5,3.5) circle (2pt);
\draw (0,2)--(1.5,2) node[draw=none,fill=none,font=\footnotesize,midway,above] {$\overline{5}$};
\draw (0,0)--(1.5,0) node[draw=none,fill=none,font=\footnotesize,midway,below right] {$\overline{4}$};
\draw (1.5,0)--(3,0) node[draw=none,fill=none,font=\footnotesize,midway,below] {$\overline{6}$};
\draw (3,0)--(3,2) node[draw=none,fill=none,font=\footnotesize,midway,right] {$\overline{v}$};
\draw (1.5,2)--(3,2) node[draw=none,fill=none,font=\footnotesize,midway,above] {$\overline{7}$};
\draw (1.5,3.5)--(3,2) node[draw=none,fill=none,font=\footnotesize,midway,above] {$\overline{8}$};
\draw (0,0)--(0,2) node[draw=none,fill=none,font=\footnotesize,midway,left] {$\overline{1}$};
\draw (0,2)--(1.5,3.5) node[draw=none,fill=none,font=\footnotesize,midway,above] {$\overline{3}$};
\draw (1.5,1)--(1.5,2) node[draw=none,fill=none,font=\footnotesize,midway,right] {$\overline{2}$};
\draw (1.5,0)--(1.5,1);
\draw (0,0) node[below left] {$x_1$};
\draw (1.5,0) node[below] {$y_2$};
\draw (3,0) node[below right] {$x_v$};
\draw (0,2) node[left] {$y_1$};
\draw (1.5,2) node[above] {$x_2$};
\draw (3,2) node[right] {$y_v$};
\draw (1.5,3.5) node[above] {$x_3$};
\draw (1.5, -1) node[below] {IV.2};
\end{tikzpicture}
\caption{Two possible subgraphs of $G$ containing $G_{\overline{v}}$ and
$G_{\overline{1}}$ for Case IV.\label{IVa}}
\end{center}
\end{figure}

\textbf{Case IV.1}
Assume the edge connecting $x_3$ to $\overline{v}$ is $\overline{8}=\{x_3,x_v\}$.
Note that
any other edges in $G$ must be incident to edges $\overline{v}$ and $\overline{1}$.
Now consider $G_{\overline{5}}$, another $\Gamma$, since 5 is a boundary vertex of
$M$.
So far $G_{\overline{5}}$ contains the edges $\overline{4}$, $\overline{6}$, $\overline{v}$
and $\overline{8}$.  To complete it to $\Gamma$, the edge
$\overline{9}=\{x_1,y_v\}$ is needed.
Note that including any other edge in $G$ would create a triangle, which
is not allowed.  So it remains to find the matching complex of the graph IV.1 of
Figure~\ref{IVa} with the single edge $\overline{9}$ added.  The graph and its
matching complex (a M\"{o}bius strip) are shown in Figure~\ref{IV1}.
This is part~3(d) of the theorem.

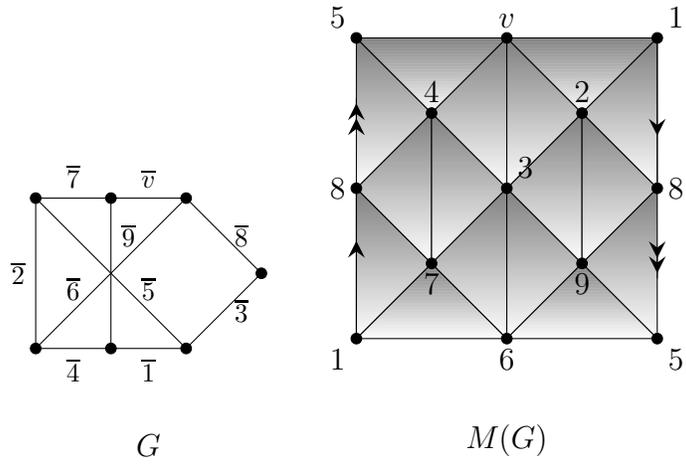
\begin{figure}[p]
\begin{center}

\begin{tikzpicture}
\filldraw[black] (0,0) circle (2pt);
\filldraw[black] (1,0) circle (2pt);
\filldraw[black] (2,0) circle (2pt);
\filldraw[black] (3,1) circle (2pt);
\filldraw[black] (0,2) circle (2pt);
\filldraw[black] (1,2) circle (2pt);
\filldraw[black] (2,2) circle (2pt);
\draw (0,0)--(1,0) node[draw=none,fill=none,font=\footnotesize,midway,below] {$\overline{4}$};
\draw (1,0)--(2,0) node[draw=none,fill=none,font=\footnotesize,midway,below] {$\overline{1}$};
\draw (2,0)--(3,1) node[draw=none,fill=none,font=\footnotesize,midway,right] {$\overline{3}$};
\draw (3,1)--(2,2) node[draw=none,fill=none,font=\footnotesize,midway,right] {$\overline{8}$};
\draw (1,2)--(2,2) node[draw=none,fill=none,font=\footnotesize,midway,above] {$\overline{v}$};
\draw (0,2)--(1,2) node[draw=none,fill=none,font=\footnotesize,midway,above] {$\overline{7}$};
\draw (0,0)--(0,2) node[draw=none,fill=none,font=\footnotesize,midway,left] {$\overline{2}$};
\draw (0,0)--(1,1) node[draw=none,fill=none,font=\footnotesize,midway,above] {$\overline{6}$};
\draw (1,1)--(2,2);
\draw (0,2)--(1,1);
\draw (1,1)--(2,0) node[draw=none,fill=none,font=\footnotesize,midway,above] {$\overline{5}$};
\draw (1,0)--(1,1);
\draw (1,1)--(1,2) node[draw=none,fill=none,font=\footnotesize,midway,right] {$\overline{9}$};
\draw (1.5, -1) node[below] {$G$};
\end{tikzpicture}
\hspace*{12pt}
\begin{tikzpicture}
\shadedraw (0,0)--(2,0)--(1,1)--(0,0);
\shadedraw (0,0)--(1,1)--(0,2)--(0,0);
\shadedraw (0,2)--(1,1)--(1,3)--(0,2);
\shadedraw (0,2)--(1,3)--(0,4)--(0,2);
\shadedraw (0,4)--(1,3)--(2,4)--(0,4);
\shadedraw (1,3)--(2,2)--(2,4)--(1,3);
\shadedraw (1,1)--(2,2)--(1,3)--(1,1);
\shadedraw (2,0)--(2,2)--(1,1)--(2,0);
\shadedraw (2,0)--(3,1)--(2,2)--(2,0);
\shadedraw (2,2)--(3,1)--(3,3)--(2,2);
\shadedraw (2,2)--(3,3)--(2,4)--(2,2);
\shadedraw (2,4)--(3,3)--(4,4)--(2,4);
\shadedraw (3,3)--(4,2)--(4,4)--(3,3);
\shadedraw (3,1)--(4,2)--(3,3)--(3,1);
\shadedraw (3,1)--(4,0)--(4,2)--(3,1);
\shadedraw (2,0)--(4,0)--(3,1)--(2,0);
\draw[directed] (0,0)--(0,2);
\draw[directed] (4,4)--(4,2);
\draw[directed] (0,2.8)--(0,3);
\draw[directed] (0,3)--(0,3.2);
\draw[directed] (4,1.2)--(4,1);
\draw[directed] (4,1)--(4,.8);
\draw (0,0) node[below left] {1};
\draw (0,2) node[left] {8};
\draw (0,4) node[above left] {5};
\draw (1,1) node[below] {7};
\draw (1,3) node[above] {4};
\draw (2,0) node[below] {6};
\draw (2,2) node[above right] {3};
\draw (2,4) node[above] {$v$};
\draw (3,1) node[below] {9};
\draw (3,3) node[above] {2};
\draw (4,0) node[below right] {5};
\draw (4,2) node[right] {8};
\draw (4,4) node[above right] {1};
\filldraw[black] (0,0) circle (2pt);
\filldraw[black] (0,2) circle (2pt);
\filldraw[black] (0,4) circle (2pt);
\filldraw[black] (1,1) circle (2pt);
\filldraw[black] (1,3) circle (2pt);
\filldraw[black] (2,0) circle (2pt);
\filldraw[black] (2,2) circle (2pt);
\filldraw[black] (2,4) circle (2pt);
\filldraw[black] (3,1) circle (2pt);
\filldraw[black] (3,3) circle (2pt);
\filldraw[black] (4,0) circle (2pt);
\filldraw[black] (4,2) circle (2pt);
\filldraw[black] (4,4) circle (2pt);
\draw (2, -1) node[below] {$M(G)$};
\end{tikzpicture}
\caption{Graph and Manifold for Case IV.1\label{IV1}}
\end{center}
\end{figure}

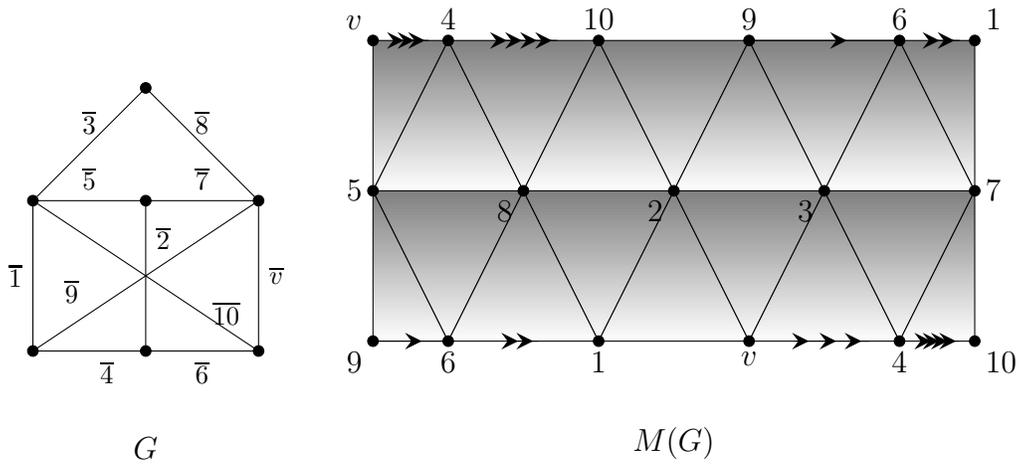
\begin{figure}
\begin{center}

\begin{tikzpicture}
\filldraw[black] (0,0) circle (2pt);
\filldraw[black] (1.5,0) circle (2pt);
\filldraw[black] (3,0) circle (2pt);
\filldraw[black] (0,2) circle (2pt);
\filldraw[black] (1.5,2) circle (2pt);
\filldraw[black] (3,2) circle (2pt);
\filldraw[black] (1.5,3.5) circle (2pt);
\draw (0,2)--(1.5,2) node[draw=none,fill=none,font=\footnotesize,midway,above] {$\overline{5}$};
\draw (0,0)--(1.5,0) node[draw=none,fill=none,font=\footnotesize,midway,below right] {$\overline{4}$};
\draw (1.5,0)--(3,0) node[draw=none,fill=none,font=\footnotesize,midway,below] {$\overline{6}$};
\draw (3,0)--(3,2) node[draw=none,fill=none,font=\footnotesize,midway,right] {$\overline{v}$};
\draw (0,0)--(1.5,1) node[draw=none,fill=none,font=\footnotesize,midway,above left] {$\overline{9}$};
\draw (1.5,2)--(3,2) node[draw=none,fill=none,font=\footnotesize,midway,above] {$\overline{7}$};
\draw (1.5,3.5)--(3,2) node[draw=none,fill=none,font=\footnotesize,midway,above] {$\overline{8}$};
\draw (0,0)--(0,2) node[draw=none,fill=none,font=\footnotesize,midway,left] {$\overline{1}$};
\draw (0,2)--(1.5,3.5) node[draw=none,fill=none,font=\footnotesize,midway,above] {$\overline{3}$};
\draw (1.5,1)--(1.5,2) node[draw=none,fill=none,font=\footnotesize,midway,right] {$\overline{2}$};
\draw (1.5,1)--(3,0) node[draw=none,fill=none,font=\footnotesize,midway,right] {$\overline{10}$};
\draw (1.5,1)--(3,2);
\draw (1.5,0)--(1.5,1);
\draw (0,2)--(1.5,1);
\draw (1.5, -1) node[below] {$G$};
\end{tikzpicture}
\hspace*{8pt}
\begin{tikzpicture}
\shadedraw (0,0)--(1,0)--(0,2)--(0,0);
\shadedraw (1,0)--(2,2)--(0,2)--(1,0);
\shadedraw (1,0)--(3,0)--(2,2)--(1,0);
\shadedraw (3,0)--(4,2)--(2,2)--(3,0);
\shadedraw (3,0)--(5,0)--(4,2)--(3,0);
\shadedraw (5,0)--(6,2)--(4,2)--(5,0);
\shadedraw (5,0)--(7,0)--(6,2)--(5,0);
\shadedraw (7,0)--(8,2)--(6,2)--(7,0);
\shadedraw (7,0)--(8,0)--(8,2)--(7,0);
\shadedraw (0,2)--(1,4)--(0,4)--(0,2);
\shadedraw (0,2)--(2,2)--(1,4)--(0,2);
\shadedraw (2,2)--(3,4)--(1,4)--(2,2);
\shadedraw (2,2)--(4,2)--(3,4)--(2,2);
\shadedraw (4,2)--(5,4)--(3,4)--(4,2);
\shadedraw (4,2)--(6,2)--(5,4)--(4,2);
\shadedraw (6,2)--(7,4)--(5,4)--(6,2);
\shadedraw (6,2)--(8,2)--(7,4)--(6,2);
\shadedraw (8,2)--(8,4)--(7,4)--(8,2);
\draw[directed] (0,0)--(1,0);
\draw[directed] (5,4)--(7,4);
\draw[directed] (1.8,0)--(2,0);
\draw[directed] (2,0)--(2.2,0);
\draw[directed] (7.4,4)--(7.6,4);
\draw[directed] (7.6,4)--(7.8,4);
\draw[directed] (.32,4)--(.46,4);
\draw[directed] (.46,4)--(.60,4);
\draw[directed] (.60,4)--(.74,4);
\draw[directed] (5.6,0)--(5.9,0);
\draw[directed] (5.9,0)--(6.3,0);
\draw[directed] (6.3,0)--(6.6,0);
\draw[directed] (1.65,4)--(1.85,4);
\draw[directed] (1.85,4)--(2.05,4);
\draw[directed] (2.05,4)--(2.25,4);
\draw[directed] (2.25,4)--(2.45,4);
\draw[directed] (7.35,0)--(7.46,0);
\draw[directed] (7.46,0)--(7.57,0);
\draw[directed] (7.57,0)--(7.68,0);
\draw[directed] (7.68,0)--(7.79,0);
\draw (0,0) node[below left] {9};
\draw (1,0) node[below] {6};
\draw (3,0) node[below] {1};
\draw (5,0) node[below] {$v$};
\draw (7,0) node[below] {4};
\draw (8,0) node[below right] {10};
\draw (0,2) node[left] {5};
\draw (2,2) node[below left] {8};
\draw (4,2) node[below left] {2};
\draw (6,2) node[below left] {3};
\draw (8,2) node[right] {7};
\draw (0,4) node[above left] {$v$};
\draw (1,4) node[above] {4};
\draw (3,4) node[above] {10};
\draw (5,4) node[above] {9};
\draw (7,4) node[above] {6};
\draw (8,4) node[above right] {1};
\filldraw[black] (0,0) circle (2pt);
\filldraw[black] (1,0) circle (2pt);
\filldraw[black] (3,0) circle (2pt);
\filldraw[black] (5,0) circle (2pt);
\filldraw[black] (7,0) circle (2pt);
\filldraw[black] (8,0) circle (2pt);
\filldraw[black] (0,2) circle (2pt);
\filldraw[black] (2,2) circle (2pt);
\filldraw[black] (4,2) circle (2pt);
\filldraw[black] (6,2) circle (2pt);
\filldraw[black] (8,2) circle (2pt);
\filldraw[black] (0,4) circle (2pt);
\filldraw[black] (1,4) circle (2pt);
\filldraw[black] (3,4) circle (2pt);
\filldraw[black] (5,4) circle (2pt);
\filldraw[black] (7,4) circle (2pt);
\filldraw[black] (8,4) circle (2pt);
\draw (4, -1) node[below] {$M(G)$};
\end{tikzpicture}
\caption{Graph and Manifold for Case IV.2\label{IV2}}
\end{center}
\end{figure}

\textbf{Case IV.2}
Assume the edge connecting $x_3$ to $\overline{v}$ is $\overline{8}=\{x_3,y_v\}$.
It is still the case that
any other edges in $G$ must be incident to edges $\overline{v}$ and $\overline{1}$.
Again consider $G_{\overline{5}}$, which is $\Gamma$.
This forces the edge $\overline{9}=\{x_1,y_v\}$.
At this point, any other edge of $G$ must be incident to $\overline{v}$,
$\overline{1}$ and $\overline{5}$.
Consider $G_{\overline{7}}$; it contains the edges $\overline{3}$, $\overline{1}$,
$\overline{4}$, and $\overline{6}$, which form a path $P_5$.
If $G_{\overline{7}}$ were just $P_5$, then $G$ and $M(G)$ would have been
considered in Case III.  So $G_{\overline{7}}$ must contain another
edge, and as it must be incident to $\overline{v}$, $\overline{1}$ and $\overline{5}$,
it must be the edge $\overline{10}=\{x_v,y_1\}$.  (This means that
$\overline{7}$ is a boundary vertex, with $G_{\overline{7}}=\Gamma$.)
Any additional edge would create a triangle.
So, we conclude that the graph for Case IV.2 must be that of Figure~\ref{IV2}.
The manifold is a torus with a $2$-ball removed.
This is part~4(c) of the Theorem.

This concludes the proof of Theorem~\ref{2d_with_boundary}.
\end{proof}

Just as in the case without boundary, while there are many $2$-dimensional manifold matching complexes with boundary, it turns out that if $d \ge 3$, then manifold matching complexes are very simple. The following theorem classifies all of them.

\begin{theorem}\label{ManifoldBoundaryThm}
Let $G$ be a simple graph such that $M(G)$ is a $d$-dimensional homology 
manifold with boundary for some $d\ge 3$.  
Then $M(G)$ is a combinatorial $d$-ball.  
In particular, $G$ is as described in Proposition~\ref{BallProp}.
\end{theorem}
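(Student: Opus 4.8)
The plan is to induct on $d$, with base case $d=3$ supplied by the complete two-dimensional analysis: Theorem~\ref{2-sphere} classifies the $2$-spheres that are matching complexes, while Theorem~\ref{2d_with_boundary} together with the preceding classification of disconnected $2$-manifolds with boundary shows that the only $2$-balls arising as matching complexes come from disjoint unions of graphs in $\mathcal{BG}\cup\mathcal{SG}$ with at least one factor from $\mathcal{BG}$ (the unique \emph{connected} such graph being $\mbox{Sp}_3$). The induction hypothesis I carry is the full statement at dimension $d-1$: every matching complex that is a $(d-1)$-sphere comes from a disjoint union of basic sphere graphs (Theorem~\ref{ManifoldThm} for $d-1\ge 3$, Theorem~\ref{2-sphere} for $d-1=2$), and every matching complex that is a $(d-1)$-ball comes from a disjoint union of basic ball and sphere graphs with at least one ball factor (this theorem at level $d-1$, whose converse is Proposition~\ref{BallProp}). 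Since $M(\mbox{Sp}_k)$ has dimension $k-1$ while $\Gamma$ and $P_2$ give balls of dimension $1$ and $0$, a consequence of the hypothesis is that once $d-1\ge 2$ the only connected graph producing a $(d-1)$-ball is $\mbox{Sp}_d$.

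First I would dispose of the disconnected case. If $G=G_1\sqcup G_2$, then $M(G)=M(G_1)\ast M(G_2)$ by Lemma~\ref{DisjointJoinLemma}, and Corollary~\ref{BallCor} shows $M(G)$ is a ball with $M(G_1)$ a $k$-sphere or $k$-ball and $M(G_2)$ a $(d-k-1)$-ball. Both factors have dimension $<d$, so the induction hypothesis, together with the base cases $0\le k\le 2$, makes $G_1$ and $G_2$ disjoint unions of basic graphs; since $M(G_2)$ is a ball, at least one ball factor appears. Hence $G$ is as in Proposition~\ref{BallProp} and $M(G)$ is a combinatorial ball. This reduces everything to \textbf{connected} $G$, where the goal becomes to prove $G=\mbox{Sp}_{d+1}$, so that $M(G)$ is the combinatorial $d$-ball guaranteed by Proposition~\ref{BallProp}.

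For connected $G$ I would first show $G$ is triangle-free. Every graph in $\mathcal{BG}\cup\mathcal{SG}$ is triangle-free, so by the induction hypothesis every link $\link_M w=M(G_{\overline w})$ is triangle-free. If $G$ contained a triangle $T$ and some edge were disjoint from $T$, that edge's link graph would contain $T$; otherwise every edge of $G$ meets $T$, and then for an edge $\overline v$ of $T$ the graph $G_{\overline v}$ is a star $K_{1,m}$, whose matching complex $mP_1$ is $0$-dimensional, contradicting $\dim\link_M v=d-1\ge 2$. Next, because $M$ is a manifold \emph{with} boundary, I fix a boundary vertex $v$, so that $G_{\overline v}$ is a $(d-1)$-ball graph, a disjoint union of basic graphs with at least one ball component. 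Using connectivity and triangle-freeness I would analyze the connecting edges joining $\overline v=\{x_v,y_v\}$ to the components of $G_{\overline v}=G-\{x_v,y_v\}$, organizing the argument by the isomorphism type of $G_{\overline v}$ exactly as the cases are organized in the proof of Theorem~\ref{2d_with_boundary}. For each candidate attachment I apply the induction hypothesis to the links $G_{\overline w}$ of vertices $w$ near $v$: each such link must again be a disjoint union of basic graphs, and Proposition~\ref{NoPathLemma} (no induced $P_6$) and the flag property (Proposition~\ref{flag}) give further global obstructions. The upshot should be that the only surviving connected configuration is $\mbox{Sp}_{d+1}$: up to relabeling, $\overline v$ is the outer edge of one leg, $G_{\overline v}=\mbox{Sp}_d$ consists of the remaining $d$ legs, and a single connecting edge joins an endpoint of $\overline v$ to the central vertex.

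The main obstacle is that, unlike the boundaryless situation of Theorem~\ref{ManifoldThm}, the link graphs are no longer of bounded size: the spiders $\mbox{Sp}_k$ have arbitrarily many edges, so the clean ``some neighboring link contains a connected component with more than six edges'' contradiction is unavailable. Compounding this, $M(\mbox{Sp}_{d+1})$ has \emph{no} interior vertices (every vertex link is a simplex or a copy of $M(\mbox{Sp}_d)$), so one cannot pass to an interior vertex to invoke the sphere classification. The technical heart is therefore to exclude, for connected $G$, every configuration except the single spider attachment, and in particular to rule out higher-dimensional analogues of the annulus, M\"{o}bius band, and punctured torus that occur in dimension two. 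The mechanism is to show that any extra component of $G_{\overline v}$, any connecting edge landing off the spider center, or any second connecting edge produces, in some nearby link $G_{\overline w}$, a connected graph that is \emph{not} a disjoint union of basic graphs, for instance a path whose matching complex contains an induced $P_6$, or a graph with a vertex of impermissibly high degree. Carrying out this case analysis is where the real work lies.
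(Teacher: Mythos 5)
Your proposal has the correct skeleton and several correct pieces: the induction on $d$ grounded in the two-dimensional classification, the reduction of the disconnected case via Lemma~\ref{DisjointJoinLemma}, Corollary~\ref{BallCor} and the inductive hypothesis, the triangle-freeness argument, and the correct identification of the target for connected $G$ (namely $G=\mbox{Sp}_{d+1}$). All of this matches the paper. But the heart of the theorem --- proving that a \emph{connected} $G$ must be $\mbox{Sp}_{d+1}$ --- is not actually carried out in your proposal: you fix an arbitrary boundary vertex $v$, note that $G_{\overline v}$ is some disjoint union of basic graphs with a ball factor, and then describe a case analysis over all such isomorphism types, conceding that ``carrying out this case analysis is where the real work lies.'' You even identify precisely why the earlier techniques fail here (spiders make the link graphs unbounded, and $M(\mbox{Sp}_{d+1})$ has no interior vertices), but you do not supply the mechanism that overcomes this. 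That mechanism is the missing idea, and without it your case analysis is open-ended: an arbitrary boundary vertex can have a link graph with arbitrarily many components of arbitrary size, and ruling out each attachment pattern by inspecting ``nearby links'' is exactly the unbounded analysis you set out to avoid.

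The paper resolves this with a dichotomy you do not have. Case I: if \emph{every} boundary vertex has a disconnected link graph, then (since interior vertices always have disconnected link graphs, by Theorems~\ref{2dimMan} and \ref{ManifoldThm}) every vertex does; picking a boundary vertex $v$ and a suitable edge $\overline a$ in a component of $G_{\overline v}$ (the $P_2$ component, or a well-chosen edge of a $\Gamma$ or $\mbox{Sp}_k$ component), one examines $G_{\overline a}$ --- which is again disconnected by the Case~I hypothesis --- and concludes that part of $G_{\overline v}$ cannot lie in the component of $\overline v$ in $G$, so $G$ itself is disconnected. Case II: otherwise \emph{some} boundary vertex has a \emph{connected} link graph, and the induction hypothesis instantly pins it down as $\mbox{Sp}_d$; a short analysis of $G_{\overline a_1}$ for the outer leg edge $\overline a_1$ (it must be $\mbox{Sp}_{d-1}\sqcup P_2$, $\mbox{Sp}_{d-1}\sqcup P_3$, or $\mbox{Sp}_d$, and any extra connecting edge would force an induced $P_6$ inside $G_{\overline a_2}$, impossible for disjoint unions of basic graphs) yields $G=\mbox{Sp}_{d+1}$ or $G$ disconnected. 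Note that this global ``all boundary links disconnected $\Rightarrow$ $G$ disconnected'' step is what lets the paper avoid ever analyzing a boundary vertex with a large disconnected link graph in the connected case; your local, one-vertex-at-a-time plan has no substitute for it, so the gap is not merely unfinished bookkeeping but a missing structural argument.
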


\begin{proof}
Let $M=M(G)$ and $\dim M = d \geq 3$, and assume for lower dimensions that the only graphs which have homology spheres and balls as matching complexes are those graphs described in Propositions~\ref{SphereMatchingProp} and \ref{BallProp}, respectively. We will use this to show that $G$ is as described in Proposition~\ref{BallProp} and therefore $M$ is a combinatorial $d$-ball.

If $v\in M$ is an interior vertex then $\link_{M} v$ is a homology $(d-1)$-sphere, and if $v$ is a boundary vertex then $\link_{M}v$ is a homology $(d-1)$-ball. We already know from Theorems \ref{2dimMan} and \ref{ManifoldThm} that if $v$ is an interior vertex, then $G_{\overline{v}}$ is disconnected. We will focus on boundary vertices and analyze which sort of links can appear.

We will consider two cases:

\textbf{Case I}. For all boundary vertices $v \in M$, $G_{\overline{v}}$ is disconnected.

\textbf{Case II}. There exists a vertex $v$ in the boundary of  $M$ such that $G_{\overline{v}}$ is connected.

We will show that Case I implies that $G$ must be disconnected and that
$M$ must be a combinatorial ball. In Case II, we will show that either $G$ is disconnected or $G$ is the spider graph $\mbox{Sp}_{d+1}$. In either case, $M$ is a combinatorial ball.

\textbf{Case I}.
 For all vertices $v \in M$, $G_{\overline{v}}$ is disconnected.
In particular, for each boundary vertex $v\in M$, $G_{\overline{v}}$ contains a basic ball graph as a connected component.
We show that $G$ itself is disconnected.

\textbf{Case I.1}. There exists a boundary vertex $v \in M$ such that
$G_{\overline{v}}$ contains $P_2$ as a connected component.
Write $G_{\overline{v}}= P_2 \sqcup G'$, and
let $\overline{a}$ be the edge of $P_2$.
Consider $G_{\overline{a}}$, which is also disconnected.
Since all other edges of $G$ must contain a vertex of $\overline{v}$, all edges
of $G_{\overline{a}}$ are either incident to $\overline{v}$ or contained in $G'$.
Since $G_{\overline{a}}$ is disconnected part (or all) of $G'$ is not in the
same component of $G$ as $\overline{v}$.  Thus $G$ is not connected.

\textbf{Case I.2}.  There are no boundary vertices $v\in M$ such that
$G_{\overline{v}}$ contains $P_2$ as a connected component.
Let $v$ be a boundary vertex of $M$ and $G^o$ be a component of $G_{\overline{v}}$ that
is either $\Gamma$ or $\mbox{Sp}_k$ ($k\ge 2$), and
$G'=G_{\overline{v}}\setminus G^o$.  See Figures~\ref{ManBdThmCaseI.2MB}
and~\ref{ManBdThmCaseI.2detailMB}.

\begin{figure}[h]
\begin{center}
\begin{tikzpicture}[scale=1]
\node [cloud, draw,cloud puffs=10,cloud puff arc=120, aspect=2, inner ysep=1em] at (6,2.5) {$G'$}; 
\node [cloud, draw,cloud puffs=10,cloud puff arc=120, aspect=2, inner ysep=1em] at (1,2.5) {$G^o$}; 
\node[vertex] (f) at (3,5) {};
\node[vertex] (g) at (4,5) {};
\draw (f)--(g) node[draw=none,fill=none,font=\small,midway,above] {$\overline{v}$};
\end{tikzpicture}
\caption{The subgraph $G_{\overline{v}}$ for Case I.2  See
Figure~\ref{ManBdThmCaseI.2detailMB} for options for $G^o$.
\label{ManBdThmCaseI.2MB}}
\end{center}
\end{figure}
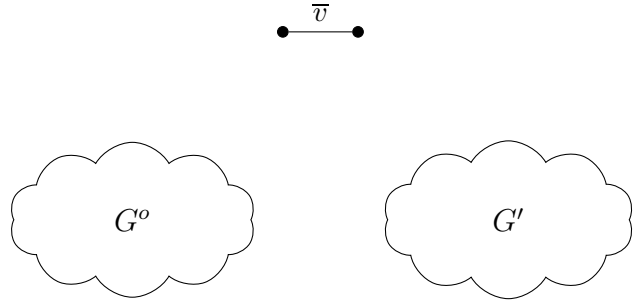

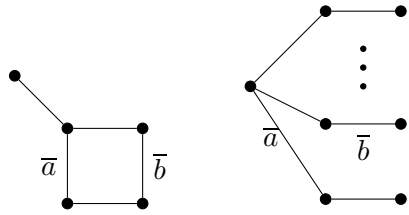
\begin{figure}[h]
\begin{center}
\begin{tikzpicture}
\node[vertex] (a) at (1,0) {};
\node[vertex] (b) at (2,0) {};
\node[vertex] (c) at (2,1) {};
\node[vertex] (d) at (1,1) {};
\node[vertex] (e) at (.3,1.7) {};
\draw (a)--(b) node[draw=none,fill=none,font=\small,very near start,above] {};
\draw (b)--(c) node[draw=none,fill=none,font=\small,midway,right] {$\overline{b}$};
\draw (c)--(d) node[draw=none,fill=none,font=\small,near end,below] {};
\draw (a)--(d) node[draw=none,fill=none,font=\small,midway,left] {$\overline{a}$};
\draw (d)--(e) node[draw=none,fill=none,font=\small,midway,above] {};
\end{tikzpicture}
\hspace*{18pt}
\begin{tikzpicture}
\node[vertex] (a) at (0,1.5) {};
\node[vertex] (b) at (1,0) {};
\node[vertex] (c) at (2,0) {};
\node[vertex] (d) at (1,1) {};
\node[vertex] (e) at (2,1) {};
\node[vertex] (h) at (1,2.5) {};
\node[vertex] (i) at (2,2.5) {};
\node[dot] (x) at (1.5,1.5) {};
\node[dot] (y) at (1.5,1.75) {};
\node[dot] (z) at (1.5,2) {};
\draw (a)--(b) node[draw=none,fill=none,font=\small,near start,below] {$\overline{a}$};
\draw (b)--(c) node[draw=none,fill=none,font=\small,midway,right] {};
\draw (a)--(d) node[draw=none,fill=none,font=\small,near end,below] {};
\draw (d)--(e) node[draw=none,fill=none,font=\small,midway,below] {$\overline{b}$};
\draw (a)--(h) node[draw=none,fill=none,font=\small,midway,above] {};
\draw (h)--(i) node[draw=none,fill=none,font=\small,midway,above] {};
\end{tikzpicture}
\caption{The options for $G^o$ for Case I.2
\label{ManBdThmCaseI.2detailMB}}
\end{center}
\end{figure}

For each possibility in Figure~\ref{ManBdThmCaseI.2detailMB} a special edge
$\overline{a}$ of $G^o$ and an edge $\overline{b}$ of $G^o$
not incident to $\overline{a}$ have been identified.
Since $G_{\overline{a}}$ is assumed not to contain $P_2$ as a connected
component, $\overline{b}$ must be connected to an edge of $\overline{v}$.
So $G^o$ is in the component of $G$ containing $\overline{v}$.
As in Case I.1,
since all other edges of $G$ must contain a vertex of $\overline{v}$, all edges
of $G_{\overline{a}}$ are either incident to $\overline{v}$ or contained in $G'$.
Since $G_{\overline{a}}$ is disconnected part (or all) of $G'$ is not in the
same component of $G$ as $\overline{v}$.  Thus $G$ is not connected.

Therefore in Case I, $G$ must be disconnected, and by Corollary \ref{BallCor} $M$ must be a  $d$-ball.

\textbf{Case II}. There exists a vertex $v$ in the boundary of $M$ such that $G_{\overline{v}}$ is connected.

By our induction hypothesis, Proposition~\ref{BallProp} lists all homology balls that arise as matching complexes for dimensions less than $d$. Since $\link_{M} v$ is a $(d-1)$-ball, this implies that $G_{\overline{v}} = \textrm{Sp}_{d}$. We consider $G_{\overline{a}_1}$ in Figure~\ref{ManBdThmCaseII} (ignore the dotted edge
$\overline{y}$).

\textbf{Case II.1}. The vertex $a_1$ is an interior vertex of $M$.

This implies that $G_{\overline{a}_1}$ is a disjoint union of copies of elements from $\mathcal{SG}$. Since the path $\overline{a}_2,\overline{b}_2,\overline{b}_d,\overline{a}_d$ is in $G_{\overline{a}_1}$, this path is part of either $C_5$ or $K_{3,2}$. But the only edges of $G$ not included in Figure~\ref{ManBdThmCaseII} must be incident to $\overline{v}$,
which is a contradiction. Therefore this subcase is impossible.

\textbf{Case II.2}. The vertex $a_1$ is a boundary vertex of $M$.

Again considering Figure~\ref{ManBdThmCaseII} (ignore the dotted edge $\overline{y}$),
we see that the subgraph $G_{\overline{a}_1}$ contains $\mbox{Sp}_{d-1}$ and $\overline{v}$. Since $\link_{M}a_1$ is a homology $(d-1)$-ball, our induction hypothesis says that there are only three options for the corresponding subgraph.

\textbf{Case II.2.a}. $G_{\overline{a}_1} = \mbox{Sp}_{d-1} \sqcup P_2$
or $G_{\overline{a}_1} = \mbox{Sp}_{d-1} \sqcup P_3$
The graph $G_{\overline{a}_1}$ is shown in
Figure~\ref{ManBdThmCaseII}, with a dotted edge $\overline{y}$ for the extra
edge in the case $\mbox{Sp}_{d-1} \sqcup P_3$.
The only other possible edges of $G$ connect one of the vertices of
$\overline{a}_1$ to one of the vertices of the edge $\overline{v}$.
Assume any one of these four edges exists and call it $\overline{x}$.

We now consider $\link_{M} \overline{a}_2$, which by induction must be as described in Proposition~\ref{BallProp}. However, we can see that $G_{\overline{a}_2}$ must contain an induced path of length $5$ (either $\overline{x},\overline{a}_1,\overline{b}_1,\overline{b}_d,\overline{a}_d$ or $\overline{v},\overline{x},\overline{b}_1,\overline{b}_d,\overline{a}_d$ depending on whether $\overline{x}$ is incident to $\overline{b}_1$ or not).
But none of the graphs
described in Proposition~\ref{BallProp} have an induced $P_6$.
Therefore no such edges $\overline{x}$ exist and so $G$ is exactly the graph pictured in Figure~\ref{ManBdThmCaseII}.

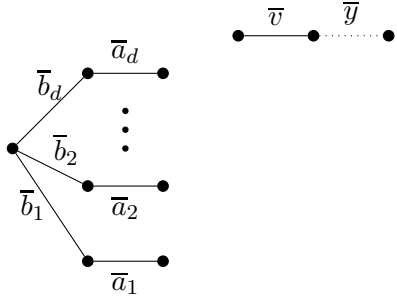
\begin{figure}[h]
\begin{center}
\begin{tikzpicture}[scale=1]

\node[vertex] (a) at (0,1.5) {};
\node[vertex] (b) at (1,0) {};
\node[vertex] (c) at (2,0) {};
\node[vertex] (d) at (1,1) {};
\node[vertex] (e) at (2,1) {};
\node[vertex] (f) at (3,3) {};
\node[vertex] (g) at (4,3) {};
\node[vertex] (h) at (1,2.5) {};
\node[vertex] (i) at (2,2.5) {};
\node[vertex] (j) at (5,3) {};
\node[dot] (x) at (1.5,1.5) {};
\node[dot] (y) at (1.5,1.75) {};
\node[dot] (z) at (1.5,2) {};
\draw (a)--(b) node[draw=none,fill=none,font=\small,near start,below] {$\overline{b}_1$};
\draw (b)--(c) node[draw=none,fill=none,font=\small,midway,below] {$\overline{a}_1$};
\draw (a)--(d) node[draw=none,fill=none,font=\small,near end,above] {$\overline{b}_2$};
\draw (d)--(e) node[draw=none,fill=none,font=\small,midway,below] {$\overline{a}_2$};
\draw (a)--(h) node[draw=none,fill=none,font=\small,midway,above] {$\overline{b}_d$};
\draw (h)--(i) node[draw=none,fill=none,font=\small,midway,above] {$\overline{a}_d$};
\draw (f)--(g) node[draw=none,fill=none,font=\small,midway,above] {$\overline{v}$};
\draw[dotted] (g)--(j) node[draw=none,fill=none,font=\small,midway,above] {$\overline{y}$};
\end{tikzpicture}
\caption{A subgraph for Case II.2.a. Any remaining edges of $G$ must be incident to both $\overline{v}$ and $\overline{a}_1$. \label{ManBdThmCaseII}}
\end{center}
\end{figure}

\textbf{Case II.2.b}. $G_{\overline{a}_1} = \mbox{Sp}_{d}$

In this case, $G$ contains the subgraph in Figure~\ref{ManBdThmCaseII.2.b}. Again, the only possible edges of $G$ that do not appear in Figure~\ref{ManBdThmCaseII.2.b} are edges that connect one of the vertices of $\overline{a}_1$ to one of the vertices of the edge $\overline{v}$. Again we consider $\link_{M} \overline{a}_2$. If any of these four possible edges exists, then $G_{\overline{a}_2}$ is $\mbox{Sp}_d$ with an additional edge connecting two of the legs of the spider. This again contradicts Proposition~\ref{BallProp}. Therefore $G$ is exactly the graph pictured in Figure~\ref{ManBdThmCaseII.2.b}.

\begin{figure}[h]
\begin{center}
\begin{tikzpicture}[scale=1]
\node[vertex] (a) at (0,1.5) {};
\node[vertex] (b) at (1,0) {};
\node[vertex] (c) at (2,0) {};
\node[vertex] (d) at (1,1) {};
\node[vertex] (e) at (2,1) {};
\node[vertex] (f) at (1,3.5) {};
\node[vertex] (g) at (2,3.5) {};
\node[vertex] (h) at (1,2.5) {};
\node[vertex] (i) at (2,2.5) {};
\node[dot] (x) at (1.5,1.5) {};
\node[dot] (y) at (1.5,1.75) {};
\node[dot] (z) at (1.5,2) {};
\draw (a)--(b) node[draw=none,fill=none,font=\small,near start,below] {$\overline{b}_1$};
\draw (b)--(c) node[draw=none,fill=none,font=\small,midway,below] {$\overline{a}_1$};
\draw (a)--(d) node[draw=none,fill=none,font=\small,near end,above] {$\overline{b}_2$};
\draw (d)--(e) node[draw=none,fill=none,font=\small,midway,below] {$\overline{a}_2$};
\draw (a)--(h) node[draw=none,fill=none,font=\small,very near end,below] {$\overline{b}_d$};
\draw (h)--(i) node[draw=none,fill=none,font=\small,midway,above] {$\overline{a}_d$};
\draw (f)--(g) node[draw=none,fill=none,font=\small,midway,above] {$\overline{v}$};
\draw (a)--(f) node[draw=none,fill=none,font=\small,midway,above] {$\overline{z}$};
\end{tikzpicture}
\caption{A subgraph for Case II.2.b. Any remaining edges of $G$ must be incident to both $\overline{v}$ and $\overline{a}_1$.\label{ManBdThmCaseII.2.b}}
\end{center}
\end{figure}
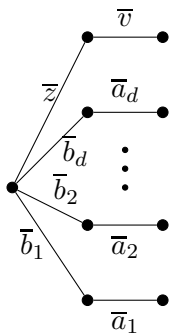

Therefore we have proved that either $G = \mbox{Sp}_{d+1}$ or that $G$ is disconnected. In the case that $G$ is disconnected, $M$ is a nontrivial join of some complexes $M(G_1)$ and $M(G_2)$ where $G = G_1 \sqcup G_2$. 
Therefore by Proposition~\ref{JoinManifold}, we know that $M$ is a homology
 $d$-ball. 
By our induction hypothesis, this means that $G_1$ and $G_2$ are exactly as 
described in Proposition~\ref{BallProp}, and therefore so is $G$, and $M(G)$
is a combinatorial $d$-ball. 
This completes the proof of the theorem.
\end{proof}

Thus we see that the homology manifolds with boundary that occur as
matching complexes are balls, the 2-dimensional annulus, the
M\"{o}bius strip, and the 2-dimensional torus with a ball removed.
Note that the balls that are matching complexes are all shellable,
as they are the joins of shellable balls and spheres.
Also, all of these matching complexes of dimension $d\ne 2$ have at most
$3d+3$ vertices, and those of dimension 2 have at most $11=3(2)+5$ vertices.

Jeli\'{c} Milutinovi\'{c}, et al.\ \cite{JJMV} show that (the 1-skeleton of)
a connected matching
complex has diameter at most 2 unless the graph has some pair of edges 
such that every other edge is incident to at least one edge of the pair.
 The only manifold matching complexes with 
diameter greater than 2 are $C_6 = M(K_{3,2})$, $P_4 = M(P_5)$ and 
$P_5 = M(\Gamma)$.  Note that the join of any two simplicial complexes 
has diameter at most 2.

\section{Further areas of research}

In this paper we have found all matching complexes that are homology
manifolds, both with and without boundary.  
We showed that all homology manifolds that arise as matching complexes are
in fact combinatorial manifolds.
Manifolds are, by definition, pure
complexes, meaning that all maximal faces are of the same size.  For matching complexes, this implies
that the corresponding graphs are ``equimatchable,'' that is, all maximal
matchings have the same size.
The question of which graphs are equimatchable in general is an ongoing area of research; see, for example, \cite{Frendrup}.

The {\em independence complex} of a graph is the simplicial complex whose
vertex set is the set of vertices of the graph with a face for each
independent (mutually nonadjacent) set of vertices.
The {\em line graph} of a graph $G$ is the graph $L(G)$ whose vertex set is
$E(G)$, and where two vertices of $L(G)$ are adjacent if the corresponding edges
of $G$ are incident.
The matching complex of a graph is the independence complex of its line
graph.  The class of all graphs is much larger than the class of line
graphs.  One could investigate which independence complexes of graphs
are homology manifolds.

Homology  manifolds are defined by conditions on links of faces. There are numerous well-studied properties of simplicial complexes that can be defined via similar link conditions, e.g., Buchsbaum, Cohen-Macaulay, and vertex decomposable complexes. Some of these have appeared in the context of matching complexes; for example, Ziegler showed in \cite{ziegler} that the $\nu_{m,n}$-skeleton of $M(K_{m,n})$ is vertex decomposable (and hence shellable), where $\nu_{m,n} = \min \left\lbrace m, \left\lfloor \frac{m+n+1}{3}\right\rfloor \right\rbrace -1.$ Similarly, Athanasiadis showed in \cite{Athan} that the $\nu_n$-skeleton of $M(K_n)$ is vertex decomposable, where $\nu_n = \left\lfloor\frac{n+1}{3}\right\rfloor -1$. (Shellability in this case was originally shown by Shareshian and Wachs  in \cite{SW07}.) 

We might ask the opposite sort of question: which complexes with various link condition properties can be realized as matching complexes? Our approach would be extended most naturally to Buchsbaum complexes. A pure $d$-dimensional complex is said to be \emph{Buchsbaum} if the link of any nonempty face $\sigma$ has the homology of a wedge of $(d-|\sigma|)$-spheres. All manifolds are Buchsbaum, but there are many matching complexes which are Buchsbaum but not manifolds. One simple example is $M(K_{1,3} \sqcup P_2)$.

Recall that matching complexes are flag complexes.
Frohmader~\cite{Frohmader} proved a conjecture of Kalai
(see \cite[Section III.4]{Stanley}) constructing, for every flag
complex $\Delta$, a balanced complex $\Delta'$ with the same number of faces 
of each dimension as $\Delta$.  
Chong and Nevo \cite{ChongNevo} extended this with a bound on the top homology.
(Balanced means the $1$-skeleton of the complex has
chromatic number equal to the size of the largest face in the complex.)
Many, but not all,  of our manifold matching complexes are balanced.
The basic sphere graph $C_5$ has matching complex $C_5$, which is clearly
not balanced.  Matching complexes that are spheres and balls arise from
graphs that are disjoint unions of basic sphere and ball graphs.  Any
such union that does not include $C_5$ has a balanced matching
complex.  The 2-dimensional torus, matching complex of $K_{4,3}$, is
balanced.  Among the matching complexes that are manifolds with boundary,
the triangulated M\"{o}bius strips are not balanced. (They all come from
graphs with odd cycles.)  But the others, the triangulated
annulus and the torus with 2-ball removed, are balanced.  For those that
are not balanced, it would be interesting to examine the corresponding
balanced complexes Frohmader's method would construct.
We could also ask what other matching complexes (that are not manifolds)
are balanced.

\section*{Acknowledgments}

This work was done as part of the 2018 Graduate Research Workshop in Combinatorics.  The workshop was partially funded by NSF  grants  1603823, 1604773, and  1604458,  ``Collaborative  Research:  Rocky  Mountain  -  Great Plains Graduate Research Workshops in Combinatorics'', NSA grant H98230-18-1-0017, ``The 2018 and 2019 Rocky Mountain - Great Plains Graduate Research Workshops in Combinatorics'', Simons Foundation Collaboration Grants \#316262 and \#426971, and grants from the Combinatorics Foundation and the Institute for Mathematics and its Applications.  Margaret Bayer also received support from the University of Kansas General 
Research Fund. Bennet Goeckner also received support from an AMS-Simons travel grant.
Marija Jeli\'{c} Milutinovi\'{c} also received support from
Grant \#174034 of the Ministry of Education, Science and Technological
Development of Serbia.

We would like to thank  Helen Jenne, Alex McDonough,
Jeremy Martin, George Nasr and Julianne Vega for their
contributions to the project during the workshop, as well as the
organizers of GRWC 2018.
We would also like to thank Isabella Novik and Russ Woodroofe for comments on earlier drafts of this paper.
Finally, we thank the referee for the careful reading and 
suggestions that improved the paper.


\vspace{12pt}

\textsc{Margaret Bayer}, University of Kansas, USA, bayer@ku.edu

\textsc{Bennet Goeckner}, University of Washington, USA, goeckner@uw.edu

\textsc{Marija Jeli\'{c} Milutinovi\'{c}}, University of Belgrade, Serbia, marijaj@matf.bg.ac.rs
\end{document}